\theoremstyle{plain}
\newtheorem{thm}{Theorem}[section]
\newtheorem{prop}[thm]{Proposition}
\newtheorem{cor}[thm]{Corollary}
\newtheorem{lem}[thm]{Lemma}
\newtheorem*{thm*}{Theorem}
\newtheorem*{conj*}{Conjecture}
\newtheorem*{prop*}{Proposition}
\theoremstyle{definition}
\newtheorem{defi}[thm]{Definition}
\newtheorem*{nota*}{Notation}
\newtheorem{rem}[thm]{Remark}
\newtheorem{ex}[thm]{Example}
\newtheorem{concl}[thm]{Conclusion}
\newtheorem{idea}{Idea}
\newtheorem{prob}[idea]{Problem}
\newtheorem*{prob*}{Problem}
\newtheoremstyle{introTheorems}
  {}
  {}
  {\itshape}
  {}
  {\bfseries}
  {}
  { }
  {\thmname{#1}
  \textnormal{\bf \thmnote{#3}.$\hspace{-.1cm}$}
  }
\theoremstyle{introTheorems}
\newtheorem{introTheorem}{Theorem}
\newcommand*{\addFileDependency}[1]{
\typeout{(#1)}
%
%
\@addtofilelist{#1}
%
\IfFileExists{#1}{}{\typeout{No file #1.}}
}\makeatother
\newcommand{\Q}{\mathbb{Q}}
\newcommand{\Z}{\mathbb{Z}}
\newcommand{\N}{\mathbb{Z}_{\geq0}}
\newcommand{\C}{\mathbb{C}}
\newcommand{\R}{\mathbb{R}}
\renewcommand{\i}{\mathrm{i}}
\newcommand{\e}{\operatorname{e}} 
\newcommand{\Aut}{\operatorname{Aut}}
\newcommand{\Ind}{\operatorname{Ind}}
\newcommand{\rk}{\operatorname{rk}}
\newcommand{\sign}{\operatorname{sign}}
\newcommand{\voa}{vertex operator algebra}
\newcommand{\Voa}{Vertex operator algebra}
\newcommand{\VOA}{Vertex Operator Algebra}
\newcommand{\fpvosa}{fixed-point vertex operator subalgebra}
\newcommand{\mtc}{modular tensor category}
\newcommand{\mtcs}{modular tensor categories}
\newcommand{\id}{\operatorname{id}}
\newcommand{\eps}{\varepsilon}
\newcommand{\h}{\mathfrak{h}}
\newcommand{\Irr}{\operatorname{Irr}}
\newcommand{\V}{\mathcal{V}}
\newcommand{\strat}{strongly rational}
\renewcommand{\d}{\mathrm{d}}
\newcommand{\OO}{\operatorname{O}}
\newcommand{\Vect}{\operatorname{Vect}}
\newcommand{\no}{\,{\raise0.25em\hbox{$\mathop{\hphantom{\cdot}}\limits^{_{\circ}}_{^{\circ}}$}}\,}
\newcommand{\cC}{\mathcal{C}}
\newcommand{\cD}{\mathcal{D}}
\newcommand{\cB}{\mathcal{B}}
\newcommand{\cZ}{\mathcal{Z}}
\newcommand{\sslash}{/\mkern-6mu/}
\newcommand{\Rep}{\operatorname{Rep}}
\newcommand{\loc}{\mathrm{loc}}
\newcommand{\Heis}{\V_\h}
\newcommand{\SO}{\operatorname{SO}}
\newcommand{\auto}{{\bar{g}}}
\newcommand{\TY}{\mathcal{TY}}
\newcommand{\LM}{\mathcal{G\!LM}}
\newcommand{\X}{\mathsf{X}}
\newcommand{\SMatrix}{\mathcal{S}}
\newcommand{\TMatrix}{\mathcal{T}}
\newcommand{\eqi}{\varphi}
\newcommand{\DiscG}{{\bar{G}}}
\newcommand{\DiscGamma}{{\bar{\Gamma}}}
\newcommand{\DiscQ}{{\bar{Q}}}
\newcommand{\DiscS}{{\bar{\sigma}}}
\newcommand{\DiscO}{{\bar{\omega}}}
\newcommand{\DiscB}{B_{\DiscQ}}
\newcommand{\Discq}{\bar{q}}
\newcommand{\eval}{\operatorname{ev}} 
\newcommand{\coeval}{\operatorname{coev}} 
\newcommand{\basis}{\mathfrak{v}} 
\newcommand{\basisTwo}{\mathfrak{w}} 
\newcommand{\unit}{\mathbf{1}}
\newcommand{\Pic}{\operatorname{Pic}}
\begin{document}

\title[$G$-Crossed Extensions and VOA Orbifolds]{Computing $G$-Crossed Extensions and\\Orbifolds of Vertex Operator Algebras}
\author[César Galindo, Simon Lentner and Sven Möller]{César Galindo,\textsuperscript{\lowercase{a}} Simon Lentner\textsuperscript{\lowercase{b}} and Sven Möller\textsuperscript{\lowercase{b},\lowercase{c}}}

\thanks{\textsuperscript{a}{Universidad de los Andes, Bogotá, Colombia}}
\thanks{\textsuperscript{b}{Universität Hamburg, Hamburg, Germany}}
\thanks{\textsuperscript{c}{Research Institute for Mathematical Sciences, Kyoto University, Kyoto, Japan}}

\thanks{\SMALL Email: \href{mailto:cn.galindo1116@uniandes.edu.co}{\nolinkurl{cn.galindo1116@uniandes.edu.co}}, \href{mailto:simon.lentner@uni-hamburg.de}{\nolinkurl{simon.lentner@uni-hamburg.de}}, \href{mailto:math@moeller-sven.de}{\nolinkurl{math@moeller-sven.de}}}

\begin{abstract}
In this article, we develop tools for computing $G$-crossed extensions of braided tensor categories. Their equivariantisations appear as categories of modules of fixed-point subalgebras (or orbifolds) of \voa{}s and are often difficult to determine.

As the first tool, we show how the seminal work of Etingof, Nikshych and Ostrik on the uniqueness of $G$-crossed extensions can be used to determine the category of modules of orbifold \voa{}s. As an application, we determine the modular tensor category of the orbifold of a lattice \voa{} under a lift of $-\!\id$ for a lattice with odd-order discriminant form. In that case, the de-equivariantisation is of Tambara-Yamagami type.

As the second tool, we describe how $G$-crossed extensions and condensations by commutative algebras commute in a suitable sense. This leads to an effective approach to compute new $G$-crossed extensions. As one application, we produce the coherence data that are then used in \cite{GLM24b} to define a generalisation of the Tambara-Yamagami categories with more than one simple object in the twisted sector. This also yields the modular tensor category of the orbifold of an arbitrary lattice \voa{} under a lift of $-\!\id$. Finally, we sketch how to categorically approach the general problem of lattice orbifolds under lifts of arbitrary lattice involutions.
\end{abstract}

\maketitle
\setcounter{tocdepth}{1}
\tableofcontents
\setcounter{tocdepth}{2}


\section{Introduction}


\subsection{Motivation}

Braided tensor categories and specialisations, such as modular tensor categories, have emerged as vitally important structures in low-dimensional physics, such as $2$-dimensional conformal field theories and topological phases of matter. In the context of the former, they arise as local representations of fields of observables, i.e.\ as categories of modules of \voa{}s.


The main object of interest in this article is the categorical notion of a braided $G$\nobreakdash-crossed extension \cite{ENO10} (see \autoref{def_Gcrossed}). Let $\cB$ be a braided tensor category, together with an action of a (finite) group $G$ by braided tensor autoequivalences. Then a \emph{braided $G$-crossed extension} $\cC\supset\cB$ is a $G$-graded tensor category
\begin{equation*}
\cC=\bigoplus_{g\in G}\cC_g
\end{equation*}
endowed with a $G$-action satisfying $g_*\cC_h=\cC_{ghg^{-1}}$ and with a $G$-crossed braiding $\cC_g\otimes \cC_h\to \cC_{ghg^{-1}}\otimes\cC_g$, where the identity component is $\cC_1=\cB$ with the given braiding and $G$-action. For a braided $G$-crossed extension $\cB\subset\cC$, the $G$-equi\-vari\-an\-ti\-sa\-tion $\cC\sslash G$, also referred to as gauging of $\cB$ by $G$, is again a braided tensor category extending $\cB\sslash G$ (see, e.g., \cite{CGPW16}).

\medskip

One important application of the notion of braided $G$-crossed extensions, and the one we shall pursue in the present work, is orbifold theory of \voa{}s, i.e.\ the study of the \fpvosa{} $\smash{\V^G}$ of a \voa{} $\V$ under the action of a (finite) group $G$.

\Voa{}s and their representations axiomatise $2$-dimensional conformal field theories in physics. Under suitable regularity assumptions, the category of modules $\cB\coloneqq\Rep(\V)$ is a (pseudo-unitary) modular tensor category. If the group $G$ acts on $\V$, then the category of modules $\cB=\cC_1$ together with the categories $\cC_g=\smash{\Rep^g(\V)}$ of $g$-twisted modules for every $g\in G$ often forms a braided $G$-crossed tensor category $\cC=\smash{\bigoplus_{g\in G}\cC_g}=\smash{\Rep^G(\V)}$, and the $G$-equi\-vari\-an\-ti\-sa\-tion $\cC\sslash G$ is the modular tensor category $\smash{\Rep(\V^G)}$ of modules of the vertex operator subalgebra $\smash{\V^G}$ fixed by $G$, the so-called $G$-orbifold \cite{McR21b}. (We remark that a formulation of the above statement in the conformal-net axiomatisation of conformal field theory is given in \cite{Mue05}.)

The aim of this article is to devise methods that facilitate the computation of braided $G$-crossed extensions, and thereby of the full modular tensor categories (in particular, beyond just listing simple objects and fusion rules) of modules of \voa{} orbifolds, which is a notoriously difficult problem even in comparably easy examples (see, e.g., \cite{ADL05,Moe16,EMS20a,Lam20,DNR21b}). In contrast to earlier work, we shall rely very little on explicit constructions of twisted modules or twisted Zhu algebras of \voa{}s.

\medskip


At the centre of our approach (see also \cite{GJ19}) is a fundamental result by Etingof, Nikshych and Ostrik \cite{ENO10} and Davydov and Nikshych \cite{DN21} on the existence and uniqueness of braided $G$-crossed extensions, which we now state (in the version recorded in \autoref{thm_ourENO_braided}): \emph{for a given modular tensor category $\cB$ with an action of a finite group $G$, there is some obstruction $\mathrm{O}_4\in H^4(G,\C^\times)$, and if it vanishes, there exists a braided $G$-crossed extension $\cC\supset\cB$, unique up to a choice $\omega\in H^3(G,\C^\times)$ modifying the associator by a scalar depending on the $G$-degree.}

Importantly, this means that if for a given $\cB$ with $G$-action we are able to explicitly construct some braided $G$-crossed extension $\cC$, then we have essentially already determined all such extensions (see also, e.g., Section~3.2 in \cite{Bis18}). This leads to the following straightforward but nonetheless effective approach, which we want to advertise and which is contemporaneously employed in \cite{GR24}:
\begin{idea}\label{idea_1}
Given a \voa{} $\V$ with a (faithful) action of some finite group $G$, compute the category $\cC=\smash{\Rep^G(\V)}$ of $g$-twisted modules for $g\in G$ (and implicitly determine the orbifold category $\smash{\Rep(\V^G)}\cong\cC\sslash G$) by considering all possible braided $G$-crossed extensions of $\cB=\Rep(\V)$, indexed by $\omega\in H^3(G,\C^\times)$. Then, identify the correct $\omega$ by taking, e.g., the $G$-ribbon twist into account.
\end{idea}

\smallskip

However, it is still a highly nontrivial task to construct the braided $G$-crossed tensor category $\cC\supset\cB$, even in small examples. To this end, we establish in this article (see \autoref{thm_currentExtVsCrossedExt}, see also \cite{BJLP19}) the following tool. Recall that for a commutative algebra $A$ in a braided tensor category $\cB$ there is the tensor category $\cB_A$ of $A$-modules with $\otimes_A$ and a braided tensor subcategory of local modules $\cB_A^\loc$. We call $\cB_A^\loc$ the \emph{condensation} of $\cB$ by $A$ and denote it by $\cB\leadsto\cB_A^\loc$.
\begin{idea}\label{idea_2}
Taking braided $G$-crossed extensions $\cC\supset\cB$ commutes in a certain sense with condensing both categories to the categories of modules $\cC_A$ and $\cB_A$.

More precisely, let $\cC\supset\cB$ be a braided $\bar{G}$-crossed extension, and $A$ a commutative algebra in $\cB$ that can be endowed with the structure of a $G$-equivariant algebra $(A,\eqi)\in \cC\sslash G$ for some central extension $G$ of $\bar{G}$. We give a suitable notion of $G$-local modules $\cC^{\loc}_A$ over $(A,\eqi)$ in the braided $\bar{G}$-crossed extension $\cC$ such that this produces a braided $G$-crossed extension of the condensation $\cB^{\loc}_A$ of $\cB$ by $A$ in the usual sense:
\begin{equation*}
\begin{tikzcd}
\cB
\arrow[hookrightarrow]{rrr}{\text{$\bar{G}$-crossed ext.}}
\arrow{d}{\text{$\otimes_A$}}
\arrow[bend right=50,rightsquigarrow]{dd}{}
&&&
\cC
\arrow{d}{\text{$\otimes_A$}}
\arrow[bend left=50,rightsquigarrow]{dd}{}
\\
\cB_A
&&&
\cC_A
\\
\cB_A^\loc
\arrow[hookrightarrow]{u}{}
\arrow[hookrightarrow]{rrr}{\text{$G$-crossed ext.}}
&&&
\cC_A^\loc
\arrow[hookrightarrow]{u}{}
\end{tikzcd}
\end{equation*}
\end{idea}

In the \voa{} setting, a commutative algebra corresponds to a conformal extension, such as a simple-current extension, and condensation describes the category of modules over the extended \voa{} \cite{HKL15}. Hence, \autoref{idea_2} for \voa{}s states that taking conformal extensions commutes with taking $G$-orbifolds in a suitable sense. In this context, the central extension consists of \emph{lifts} of the group of automorphisms $\bar{G}$ of the underlying \voa{} to a group of automorphisms $G$ of the extended \voa{}.

For example, this approach in principle reduces the determination of ${G}$-orbifold categories of lattice \voa{}s $\V_L$ to the computation of $\bar{G}$-orbifold categories of Heisenberg \voa{}s $\Heis$ plus a purely categorical computation:
\begin{equation*}
\begin{tikzcd}
\Rep(\Heis)
\arrow[rightsquigarrow]{d}{}
\arrow[hookrightarrow]{rrr}{\text{$\bar{G}$-crossed ext.}}
&&&
\Rep^G(\Heis)
\arrow[rightsquigarrow]{d}{}
\\
\Rep(\V_L)
\arrow[hookrightarrow]{rrr}{\text{$G$-crossed ext.}}
&&&
\Rep^G(\V_L)
\end{tikzcd}
\end{equation*}
Here, we ignore for now the subtleties of infinite-order commutative algebra extensions (but see \cite{CMY22} and \autoref{sec_latticeDiscriminantForm}), and of infinite-dimensional spaces of intertwining operators that prevent us from defining $\smash{\Rep^G(\Heis)}$ rigorously (but see \autoref{sec_VOAIdea2_Els} on how to circumvent this for $G\cong\Z_2$).

\medskip


We apply our methods to the following initial class of examples, where we can already produce new results. Namely, we consider the case when the underlying category $\cB$ is pointed, i.e.\ when
\begin{equation*}
\cB\coloneqq\Vect_\Gamma^Q\quad(\text{simple objects }\C_a,a\in\Gamma)
\end{equation*}
is the braided tensor category of $\Gamma$-graded vector spaces for some finite, abelian group $\Gamma$, with associator $\omega$ and braiding $\sigma$ specified by a quadratic form $Q\colon\Gamma\to\C^\times$, as discussed in \autoref{ex_braidedVect} following \cite{JS93,DL93,EGNO15}. If $Q$ is nondegenerate, the pair $(\Gamma,Q)$ is called discriminant form, in which case $\smash{\Vect_\Gamma^Q}$ has a nondegenerate braiding and is hence a (pseudo-unitary) modular tensor category.

We first demonstrate \autoref{idea_1} directly: if $\Gamma$ has odd order and $G=\langle g\rangle\cong\Z_2$ acts by multiplication by $-1$ on $\Gamma$, then the braided $\Z_2$-crossed extension, as a tensor category, is a Tambara-Yamagami category \cite{TY98}
\begin{equation*}
\cC=\cC_1\oplus\cC_g=\Vect_\Gamma\oplus\Vect,
\end{equation*}
where the unique simple object $\X\in\cC_g$ has tensor product $\X\otimes \X=\smash{\bigoplus_{a\in\Gamma}}\C_a$. The possible $G$-crossed braidings and $G$-ribbon structures were determined in \cite{Gal22}. Based on this, we present in \autoref{thm_TYribbon} and \autoref{cor_TY} a description of the two possible braided $G$-crossed extensions (with positive quantum dimensions)
\begin{equation*}
\cC=\Vect_\Gamma^Q[\Z_2,\eps]
\end{equation*}
for $\eps=\pm 1$, and discuss how to distinguish the two possibilities based on the eigenvalues of the $\Z_2$-ribbon twist.

The $\Z_2$-equivariantisation $\cC\sslash\Z_2$ is again a modular tensor category (see \autoref{prop:TY_modular}). It was already described in \cite{GNN09} and is a special case of the more general results in \autoref*{GLM2sec_equiv} of \cite{GLM24b}.

Now, if $L$ is a positive-definite, even lattice, then the corresponding lattice \voa{} $\V_L$ has the category of modules $\Rep(\V_L)\cong\smash{\Vect_\Gamma^Q}$, where $\Gamma=L^*\!/L$ is the discriminant form of $L$ equipped with the nondegenerate quadratic form $Q\colon\Gamma\to\Q/\Z\to\C^\times$ defined by $Q(\lambda+L)=\e(\langle\lambda,\lambda\rangle/2)$ for $\lambda+L\in\Gamma$. If $\bar{G}=\Z_2$ is a group of isometries of $L$ that lifts to an action of $G=\Z_2$ as automorphisms on $\V_L$ and whose induced action on $\Gamma$ is multiplication by $-1$, then our approach produces the precise modular tensor category $\smash{\Rep(\V_L^G)}$ of modules of the orbifold \voa{} $\smash{\V_L^G}$ (see \autoref{thm_latTY} and \autoref{cor:latTY}), including all coherence data, provided that $|L^*\!/L|$ is odd. Note that typically in the \voa{} literature, only the isomorphism classes of simple objects and the fusion rules are explicitly known \cite{ADL05,Els17}, and this is already quite a nontrivial computation. A notable exception is \cite{Bis18,EG23}, which use the language of conformal nets and have obtained in these cases similar descriptions.

We point out that the two possible $\Z_2$-extensions $\smash{\Vect_\Gamma^Q}[\Z_2,\eps]$ for $\eps=\pm 1$ cannot be distinguished by $\Gamma$ and the action of $G$ on it but depend via the $G$-ribbon structure (i.e.\ the $L_0$-weights of the unique irreducible twisted $\V_L$-module) on the dimensions of the eigenspaces of the action of $G$ on the lattice $L$ (cf.\ \cite{Moe16,EMS20a} for $\cB=\Vect$). This is the expected behaviour in general: while the possible braided $G$-crossed extensions $\cC$ in \cite{ENO10,DN21} only depend on the action of $G$ on $\cB$, the exact $\omega\in H^3(G,\C^\times)$ is fixed by additional data (see \autoref{idea_1}).

\medskip

We now sketch how we propose to use \autoref{idea_2} to compute extensions of $\smash{\Vect_\Gamma^Q}$ by arbitrary $G$-actions, before we give again concrete results for $G=\Z_2$: the braided tensor category $\smash{\cB_A^\loc=\Vect_\Gamma^Q}$ is the category of modules of a commutative algebra~$A$ in the infinite braided tensor category $\cB=\smash{\Vect_{\R^d}^\DiscQ}$ associated with $(\R^d,\DiscQ)$ where $\DiscQ$ is the standard Euclidean quadratic form (or any nondegenerate, positive-definite quadratic form). This is in particular visible in the construction of lattice \voa{}s from the underlying Heisenberg \voa{}s (see, e.g., \cite{FLM88}). Using \autoref{idea_2} and our corresponding results, the problem of arbitrary braided $G$-crossed extensions of $\smash{\Vect_\Gamma^Q}$ reduces to the following fundamental problem, disregarding problems of infinite categories and groups:
\begin{prob}\label{problem_geoOrbifold}
The algebraic group $\bar{G}=\SO_n(\R)$, or any subgroup, acts as isometries on $(\R^d,\DiscQ)$ with the standard Euclidean quadratic form $\DiscQ$. What are the braided $G$-crossed extensions $\cC=\bigoplus_{g\in\bar{G}}\cC_g$ of $\cB=\smash{\Vect_{\R^d}^\DiscQ}$ with this action of $\bar{G}$?
\end{prob}

Note that $\cC_g$ only depends on $g$, not the ambient group $\bar{G}$, and there is little doubt that $\cC_g$ should be $\smash{\Vect_{(\R^d)^g}}$ for the fixed subspace $\smash{(\R^d)^g}$, but determining all involved structures is highly nontrivial.

For $\bar{G}=\Z_2$ acting on $(\R^d,\DiscQ)$, there is an orthogonal decomposition into copies of $\R$ with trivial action and copies with action $-\!\id$. So, \autoref{problem_geoOrbifold} is solved by taking Deligne products of $\smash{\Vect_\R}$ and (infinite) Tambara-Yamagami categories. Hence, in this case, \autoref{idea_2} produces the data for the definition of the braided $\Z_2$-crossed extensions $\smash{\Vect_\Gamma^Q[\Z_2,\eps]}$ of $\smash{\Vect_\Gamma^Q}$, where $\Gamma$ may now be of even order. This is a new result and one of the main contributions of this paper. On the other hand, the rigorous proof that these data actually define braided $G$-crossed tensor categories is quite involved and is given in \cite{GLM24b} (see \autoref{thm_evenTY_here}).

Applying this to lattice \voa{}s, we obtain the modular tensor categories $\smash{\Rep(\V_L^{\Z_2})}$ of lattice orbifolds under lifts of $-\!\id$ (see \autoref{thm_latmain} and \autoref{cor_latmain}), again including all coherence data (cf.\ \cite{ADL05}).

Moreover, \autoref{idea_2} produces the braided tensor categories of modules of arbitrary $\Z_2$-orbifolds of lattice \voa{}s $\V_L$ (see \autoref{sec_VOAIdea2}). We can circumvent the problems caused by $\R^d$ being infinite by only descending to the finite-index sublattice $L_+ \oplus L_- \subset L$ consisting of the intersection of $L$ with the $\Z_2$-eigenspaces. In fact, this is exactly how this case is treated in the \voa{} literature \cite{BE15,Els17}, where again only the fusion rules are determined.


\subsection{Extended Outline}

In \autoref{sec_ENO}, we review the notion of a braided $G$-crossed tensor category. Then we recall in \autoref{thm_ourENO_braided} the fundamental result from \cite{ENO10,DN21}, the classification of braided $G$-crossed extensions, in the version that we intend to use in this work.

In the quoted results, $G$-extensions $\cC=\bigoplus_{g\in G}\cC_g$ of a tensor category $\cB=\cC_1$ are classified by group homomorphisms from $G$ to the Brauer-Picard group of $\cB$, which is the group of $\cB$-bimodule categories $g\mapsto \cC_g$. The Brauer-Picard group in turn is shown to be equivalent to the braided autoequivalences of the centre $\cZ(\cB)$, by comparing the left and right action on the centre as bimodule category autoequivalences. This equivalence is however not easy to make explicit. For $\cB$ modular, the equivalence $\cZ(\cB)\cong \cB\boxtimes \cB^\mathrm{rev}$ can be used to define so-called alpha-induction $\Aut(\cB)\to \Aut(\cZ(\cB))$ (equivalently one can turn the module category into a bimodule category using the braiding), which produces the $G$-extensions with a compatible $G$-crossed braiding. We add to this the assertion, as maybe obvious to the experts, that a given $G$-action $G\to\cZ(\cB)$ uniquely determines the homomorphism to the Brauer-Picard group. Also note that from the start we assume a categorical action of the group (as this will be the case in the examples coming from \voa{}s), so the additional cohomological obstructions and choices in \cite{ENO10} do not appear.

We restrict ourselves to the case of (finite and semisimple) modular tensor categories and finite groups. Note, however, that the results on $G$-extensions are also true for nonsemisimple tensor categories with nondegenerate braiding \cite{Shi18} by \cite{DN21}. Moreover, it would be extremely helpful to have a version for infinite tensor categories and for algebraic groups $G$ (see, e.g., \autoref{problem_geoOrbifold}):
\begin{prob}
Give a classification of braided $G$-crossed extensions for $G$ an algebraic group acting on a braided tensor category $\cB$.
\end{prob}

We refer to \cite{Gal22} and \autoref*{GLM2sec_GRibbon} in \cite{GLM24b} for the definition of $G$-ribbon structures for braided $G$-crossed tensor categories and the fact that these correspond under equivariantisation to usual ribbon structures. We also recall (see \autoref*{GLM2sec_pseudo} in \cite{GLM24b}) the property of being pseudo-unitary (i.e.\ all quantum or categorical dimensions being positive), which is always true for (sufficiently nice) \voa{} examples and reduces the number of possible $G$-ribbon structures to at most one. In our examples, we can always directly give a ribbon structure, but for the future a corresponding version of the above result would be valuable:
\begin{prob}\label{problem_RibbonENO}
Give a classification of braided $G$-crossed extensions with (pseudo-unitary) $G$-ribbon structure for $G$ acting on a modular tensor category $\cB$ by ribbon autoequivalences.
\end{prob}

\smallskip

In \autoref{sec_TY}, we discuss a minimal nontrivial example of braided $G$-crossed extensions. Let $\cB=\smash{\Vect_\Gamma^Q}$ denote the braided tensor category of $\Gamma$-graded vector spaces, $\Gamma$ some finite, abelian group, with associator and braiding determined by a quadratic form $Q$ on $\Gamma$ \cite{JS93,EGNO15}. Let $G$ be a group acting (faithfully) by isometries on $(\Gamma,Q)$ and acting correspondingly (and strictly) on the category $\smash{\Vect_\Gamma^Q}$. An interesting open problem in this context is the computation of the unique braided $G$-crossed extension of $\cB$, up to a $3$-cocycle.

First, if $\Gamma$ has odd order and $G=\langle g\rangle\cong\Z_2$ acts by multiplication by $-1$, then the braided $\Z_2$-crossed extension, as a tensor category, is a Tambara-Yamagami category \cite{TY98}
\begin{equation*}
\cC=\cC_1\oplus\cC_g=\Vect_\Gamma\oplus\Vect,
\end{equation*}
where the unique simple object $\X\in \cC_g$ has tensor product $\X\otimes \X=\bigoplus_{a\in\Gamma} \C_a$ (cf.\ \cite{Bis18,EG23}). The possible $G$-crossed braidings and $G$-ribbon structures on $\cC$ have been determined in \cite{Gal22}. From this, in \autoref{thm_TYribbon} and \autoref{cor_TY}, we derive a description of the two braided $\Z_2$-crossed extensions $\smash{\Vect_\Gamma^Q}[\Z_2,\eps]=\cC$ and discuss how to distinguish them based on the eigenvalues of the $\Z_2$-ribbon twist. We also recall some facts about the appearing Gauss sums and the notion of the signature $\sign(\Gamma,Q)$ of a finite, abelian group with a quadratic form, which is again closely related to the lattice \voa{} application.

\medskip

In \autoref{sec_GExtensionVsCurrentExtension} we establish the relation between braided $G$-crossed extensions of a braided tensor category $\cB$ and condensations by a commutative algebra (see \autoref{idea_2}).

To this end, in \autoref{sec_CurrentExtension} we introduce the category of modules $\cB_A$ and of local modules $\smash{\cB_A^\loc}$ of a commutative algebra $A$ in a braided tensor category $\cB$, with tensor product $\otimes_A$ and associators.

Then, in \autoref{sec_DiscForm} we give as example the condensation of $\cB=\smash{\Vect_\DiscGamma^\DiscQ}$ by an algebra of the form $A=\bigoplus_{\lambda \in I} \C_\lambda$ for an isotropic subgroup $I\subset \DiscGamma$, resulting in the tensor category $\cB_A=\smash{\Vect_{\DiscGamma/I}^\omega}$ and the braided tensor category $\cB_A^\loc=\smash{\Vect_\Gamma^Q}$ for $\Gamma=I^\perp/I$ and $Q$ obtained from $\DiscQ$ restricted to $I^\perp$ and factored to $I^\perp/I$. While the result is certainly well-known both in a physics and mathematics context, we spend some effort to compute all structures of the resulting braided tensor category from the definitions because these computations serve as a template for the new cases considered in \autoref{sec_idea2_infTY} and because we could not find such a treatment in literature.

In \autoref{sec_ThmGExtensionVsCurrentExtension} we define $G$-local modules of a $G$-equivariant algebra $A\in \cB\sslash G$ in a braided $G$-crossed tensor category $\cC=\bigoplus_{g\in G}\cC_g$ with $\cC_1=\cB$. Then we prove the main result of this section (see also \cite{BJLP19}):
\begin{introTheorem}[\ref{thm_currentExtVsCrossedExt}]
The $G$-local modules $\cC_A^\loc$ over $A$ inside $\cC$ are a braided $G$-crossed extension of the local modules $\cB_A^\loc$ over $A$ inside $\cB$, and there is a commuting square
\begin{equation*}
\begin{tikzcd}
\cB
\arrow[hookrightarrow]{rrrr}{\text{$G$-crossed extension}}
\arrow{d}{\text{$\otimes_A$}}
\arrow[bend right=50,rightsquigarrow]{dd}{}
&&&&
\cC
\arrow{d}{\text{$\otimes_A$}}
\arrow[bend left=50,rightsquigarrow]{dd}{}
\\
\cB_A
&&&&
\cC_A
\\
\cB_A^\loc
\arrow[hookrightarrow]{u}{}
\arrow[hookrightarrow]{rrrr}{\text{$G$-crossed extension}}
&&&&
\cC_A^\loc
\arrow[hookrightarrow]{u}{}
\end{tikzcd}
\end{equation*}
\end{introTheorem}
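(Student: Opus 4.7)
The plan is to build each piece of the $G$-crossed extension structure on $\cC_A^\loc$ by imitating the classical construction of $\cB_A^\loc$ and using the $G$-equivariant structure $\eqi$ of $A$ to compensate for the twisting introduced by the $G$-crossed braiding. The starting point is the definition of $G$-local modules: for $M\in\cC_g$ with $A$-action $\rho\colon A\otimes M\to M$, the double braiding $c_{M,A}\circ c_{A,M}\colon A\otimes M\to g_*(A)\otimes M$ no longer lands in $A\otimes M$, but composing with $\eqi_g\otimes\id_M$ does. I would declare $M$ to be \emph{$G$-local} precisely when $\rho=\rho\circ(\eqi_g\otimes\id_M)\circ c_{M,A}\circ c_{A,M}$. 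For $g=1$ the equivariant structure $\eqi_1$ is the identity and this reduces to the usual locality condition, so $(\cC_1)_A^\loc=\cB_A^\loc$.

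The main technical step, and the hardest part of the argument, is to prove that the $G$-crossed braiding of $\cC$ descends through the relative tensor product $\otimes_A$ on $G$-local modules. Concretely, for $M\in(\cC_g)_A^\loc$ and $N\in(\cC_h)_A^\loc$ the braiding $c_{M,N}\colon M\otimes N\to g_*(N)\otimes M$ needs to intertwine the two natural $A$-actions that define the coequalisers $M\otimes_A N$ and $g_*(N)\otimes_A M$. I expect this to reduce by a hexagon-type diagram chase to the $G$-locality condition on $M$ together with the compatibility of $\eqi_g$ with the algebra structure on $A$; in other words, the $G$-locality condition is precisely designed to force this descent, exactly as the classical locality condition is designed for $\otimes_A$ in the untwisted case. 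Once the descent is established, the hexagon axioms and the $G$-crossed braiding identities on $\cC_A^\loc$ follow from the corresponding identities in $\cC$ by passing through the surjections $X\otimes Y\twoheadrightarrow X\otimes_A Y$. Analogously one shows that $(\cC_g)_A^\loc\otimes_A(\cC_h)_A^\loc\subset(\cC_{gh})_A^\loc$, giving the $G$-grading.

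To complete the $G$-crossed extension structure, I would equip each $h_*(M)$, for $M\in(\cC_g)_A^\loc$, with the $A$-module structure $h_*(\rho_M)\circ(\eqi_h^{-1}\otimes\id_{h_*(M)})$, which uses the equivariance of $A$ to twist the source of the action back to $A$. The cocycle condition on $\eqi$ and the $G$-crossed braiding axioms then guarantee that $h_*$ preserves $G$-locality, that $h_*(\cC_g)_A^\loc=(\cC_{hgh^{-1}})_A^\loc$, and that the induced $G$-action on $\cC_A^\loc$ is compatible with its $G$-crossed braiding. The commuting square of the theorem is then essentially formal: the horizontal inclusions are the $G$-crossed extensions $\cB\hookrightarrow\cC$ and $\cB_A^\loc\hookrightarrow\cC_A^\loc$, and the vertical condensations are $\otimes_A$ on each side, with the identification of $(\cC_A^\loc)_1$ with $\cB_A^\loc$ coming from the $g=1$ case discussed above. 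The principal obstacle throughout will be coherence bookkeeping, ensuring that the $G$-crossed hexagons interact correctly with the equivariant structure $\eqi$ and with the coequalisers defining $\otimes_A$; the algebraic content, however, is entirely captured by the $G$-locality condition.
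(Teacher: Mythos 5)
Your proposal is correct and follows essentially the same route as the paper: the same $G$-locality condition $\rho=\rho\circ(\eqi_g\otimes\id)\circ c_{M,A}\circ c_{A,M}$ (reducing to ordinary locality on $\cC_1$), the same induced $G$-action $h_{A*}=(\eqi_h^{-1})_*\circ h_*$ on $A$-modules, and the same observation that the $G$-crossed braiding descends through the coequalisers defining $\otimes_A$ precisely because of $G$-locality, with the remaining coherence checks deferred to strictness. The only ingredient the paper makes explicit that you leave implicit is that the $G$-braiding $c_{A,X}$ for all $X\in\cC$ equips $A$ with a half-braiding, i.e.\ realises it as a commutative algebra in $\cZ(\cC)$, which is what licenses the standard construction of $\otimes_A$ on the whole of $\cC_A$ before any locality is imposed.
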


\smallskip

In \autoref{sec_TYtoTY} we show as a first demonstration that the condensation from $\smash{\Vect_\DiscGamma^\DiscQ}$ to $\smash{\Vect_\Gamma^Q}$ with $\Gamma=I^\perp/I$ in \autoref{sec_DiscForm} comes with a corresponding condensation of the Tambara-Yamagami category $\smash{\Vect_\DiscGamma^\DiscQ}[\Z_2,\bar\eps]$ (from \autoref{sec_TY}) to another Tambara-Yamagami category $\smash{\Vect^{Q}_{\Gamma}}[\Z_2,\eps]$, and we find that this relates the two braided $\Z_2$-crossed extensions for the same choice $\eps=\bar\eps$ (see \autoref{prop_TYtoTY}):
\begin{equation*}
\begin{tikzcd}
\Vect_\DiscGamma^\DiscQ
\arrow[rightsquigarrow]{d}{}
\arrow[hookrightarrow]{rrr}{\text{$\Z_2$-crossed ext.}}
&&&
\Vect_\DiscGamma^\DiscQ[\Z_2,\bar\eps]
\arrow[rightsquigarrow]{d}{}
\\
\Vect_{\Gamma}^Q
\arrow[hookrightarrow]{rrr}{\text{$\Z_2$-crossed ext.}}
&&&
\Vect^{Q}_{\Gamma}[\Z_2,\eps]
\end{tikzcd}
\end{equation*}

\smallskip

Then, in \autoref{sec_TYEven} and \cite{GLM24b} we use our approach to obtain an interesting new result, the construction of the braided $\Z_2$-crossed extension of $\smash{\Vect_\Gamma^Q}$ with a certain $\Z_2$-action given by $-\!\id$ on $\Gamma$ and with $\Gamma$, in contrast to \autoref{sec_TY}, possibly of even order. While we produce the coherence data for the construction in this paper, the actual consistency proof is given in \cite{GLM24b}, \autoref*{GLM2thm_evenTY}.
\begin{introTheorem}[\ref{thm_evenTY_here}]
For $\eps\in\{\pm1\}$, the data given in \autoref*{GLM2sec_evenTY} of \cite{GLM24b} (and produced in \autoref{sec_idea2_infTY} of this paper) define a $\Z_2$-crossed ribbon fusion category
\begin{equation*}
\Vect_\Gamma^Q[\Z_2,\eps]=\Vect_\Gamma\oplus\Vect_{\Gamma/2\Gamma},
\end{equation*}
which is a braided $\Z_2$-crossed extension of $\smash{\Vect_\Gamma^Q}$ for a discriminant form $(\Gamma,Q)$ with the above categorical $\Z_2$-action. It is equipped with a natural choice of $\Z_2$-ribbon structure for which all quantum dimensions are positive.
\end{introTheorem}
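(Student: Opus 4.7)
The plan is to produce the data via Idea~\ref{idea_2} (i.e.\ Theorem~\ref{thm_currentExtVsCrossedExt}) by descending from an ambient ``infinite Tambara--Yamagami'' situation. Concretely, realise the discriminant form $(\Gamma,Q)$ as $L^*\!/L$ for a suitable $\Z_2$-invariant even lattice $L\subset \R^d$ with the $-\!\id$ isometry, and work with the braided tensor category $\cB=\Vect_{\R^d}^\DiscQ$ where $\DiscQ$ is a positive-definite quadratic form extending~$Q$. The $-\!\id$ action is an isometry of $(\R^d,\DiscQ)$, and on this nondegenerate but infinite category there is (at least formally, at the level of coherence data) a braided $\Z_2$-crossed extension $\Vect_{\R^d}^\DiscQ[\Z_2,\eps]$ with one simple object $\X$ in the twisted sector and $\X\otimes\X=\bigoplus_{a\in\R^d}\C_a$, whose associator and $\Z_2$-crossed braiding are the (two) Tambara--Yamagami data parametrised by $\eps\in\{\pm1\}$, generalising \autoref{sec_TY} to the infinite, continuous setting.

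Next, I would exhibit $A=\bigoplus_{\lambda\in L}\C_\lambda$ as a commutative algebra in $\cB$; this is isotropic because $L$ is even, and its category of local modules is $\cB_A^\loc=\Vect_\Gamma^Q$ by the template computation in \autoref{sec_DiscForm}. Since $L$ is $\Z_2$-stable, $A$ carries a $\Z_2$-equivariant structure $\eqi$; a cohomological check fixes the equivariant structure up to $H^2(\Z_2,L^*\!/L)\text{-torsors}$, but for $G=\Z_2$ this reduces to finite, explicit data. Applying Theorem~\ref{thm_currentExtVsCrossedExt} then yields a braided $\Z_2$-crossed extension $\cC_A^\loc$ of $\Vect_\Gamma^Q$ inside $\Vect_{\R^d}^\DiscQ[\Z_2,\eps]$, whose trivial component is $\Vect_\Gamma$ and whose twisted component consists of $\Z_2$-local $A$-modules supported on the single simple object $\X$ of the twisted sector. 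A direct computation, analogous to \autoref{sec_TYtoTY} but now with $L$ possibly not self-dual, shows that the simple $\Z_2$-local modules in the twisted sector are naturally indexed by $\Gamma/2\Gamma$ (the obstruction to locality being precisely the action of $2\lambda\in L$ through the $G$-crossed braiding). This already gives the claimed underlying $\Z_2$-graded fusion category $\Vect_\Gamma\oplus\Vect_{\Gamma/2\Gamma}$; extracting associators, $G$-crossed braiding and the twist on simple objects from the Tambara--Yamagami data restricted to the $A$-module category produces exactly the coherence data written in \autoref*{GLM2sec_evenTY} of \cite{GLM24b}.

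Finally, I would equip $\Vect_\Gamma^Q[\Z_2,\eps]$ with a $\Z_2$-ribbon structure by descending the natural (Tambara--Yamagami) $\Z_2$-ribbon twist from $\Vect_{\R^d}^\DiscQ[\Z_2,\eps]$; pseudo-unitarity, i.e.\ positivity of quantum dimensions (which on the twisted simples are of the form $\sqrt{|\Gamma|/|\Gamma/2\Gamma|}$), follows because the ambient twist has this property, and by \autoref*{GLM2sec_pseudo} of \cite{GLM24b} there is then at most one such $G$-ribbon structure.

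The main obstacle is that the ambient category $\Vect_{\R^d}^\DiscQ$ is not a fusion category, so Theorem~\ref{thm_currentExtVsCrossedExt} does not literally apply; the condensation functor must be handled at the level of coherence data rather than as an application of a ready-made categorical theorem. Consequently, one must still verify the pentagon, hexagons and $G$-crossed braiding axioms for the extracted data directly on the finite category $\Vect_\Gamma\oplus\Vect_{\Gamma/2\Gamma}$, which is precisely the content of \autoref*{GLM2thm_evenTY} in \cite{GLM24b}. A secondary subtlety is matching the two possible equivariant structures with the parameter $\eps=\pm 1$, which I would pin down by checking the twist on a single generating simple object in the twisted sector and comparing with the two inequivalent lifts of $-\!\id$.
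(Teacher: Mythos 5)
Your proposal follows essentially the same route as the paper: derive the coherence data by (non-rigorously) condensing the infinite Tambara--Yamagami category $\Vect_{\R^d}^{\DiscQ}\oplus\Vect$ by the lattice algebra $A=\C_\epsilon[L]$ in the spirit of \autoref{thm_currentExtVsCrossedExt}, identify the twisted-sector simples with $\smash{\widehat{L/2L}}\cong\Gamma/2\Gamma$, and defer the actual pentagon/hexagon verification of the extracted finite data to \cite{GLM24b}, exactly as the paper does. The only slight misstatement is your closing remark: the parameter $\eps$ is not pinned down by a choice of equivariant structure on $A$ but is the Tambara--Yamagami sign of the ambient extension (both values are constructed, and selecting the correct one for a given \voa{} is the content of \autoref{thm_latmain}, not of this statement).
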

In \autoref*{GLM2sec_equiv} of \cite{GLM24b} we compute the $\Z_2$-equivariantisation $\smash{\Vect_\Gamma^Q}[\Z_2,\eps]\sslash\Z_2$, which is again a modular tensor category.

Morally, the idea of the definition and the proof is that for the infinite, abelian group $\R^d$ with action $-\!\id$ there is an infinite version of the Tambara-Yamagami category, and proceeding as in \autoref{sec_TYtoTY},
\begin{equation*}
\begin{tikzcd}
\Vect_{\R^d}^\DiscQ
\arrow[rightsquigarrow]{d}{}
\arrow[hookrightarrow]{rrr}{\text{$\Z_2$-crossed ext.}}
&&&
\text{``inf.\ TY cat.''}
\arrow[rightsquigarrow]{d}{}
\\
\Vect_{\Gamma}^Q
\arrow[hookrightarrow]{rrr}{\text{$\Z_2$-crossed ext.}}
&&&
\Vect^{Q}_{\Gamma}[\Z_2,\eps]
\end{tikzcd}
\end{equation*}
produces the category in question with all structures (see \autoref{sec_idea2_infTY}). Rigorously, to avoid problems with infinite condensations, we then turn around the argument and take the resulting data as input for the definition of a finite braided $\Z_2$-crossed extension $\smash{\Vect^{Q}_{\Gamma}}[\Z_2,\eps]$ of the finite category $\smash{\Vect_\Gamma^Q}$, as done in \cite{GLM24b}.

\medskip

In \autoref{sec_VOA} we then turn to vertex operators algebras. Roughly speaking, a \voa{} $\V$ is a formal power series version of an associative, commutative algebra, and in good cases, it has attached to it a modular tensor category of modules $\Rep(\V)$. Moreover, for \voa{} modules there are associated analytic functions (called $q$-characters or graded dimensions) that form a vector-valued modular form under some action of the modular group $\operatorname{PSL}_2(\Z)$ on the Grothendieck ring. This action in turn is visible also on a purely categorical level in terms of the $\SMatrix$-matrix (matrix of traces of all double braidings) and $\TMatrix$-matrix (diagonal matrix of all twist eigenvalues).

In \autoref{sec_VOAIntro}, we briefly review the main concepts related to \voa{}s relevant for this work.

In \autoref{sec_LatticeVOA}, we introduce two important examples: the rank-$d$ Heisenberg \voa{} $\Heis$ associated with a vector space $\h=\R^d$ (viewed as abelian Lie algebra) equipped with a nondegenerate, symmetric bilinear form and the \voa{} $\V_L$ associated with a positive-definite, even lattice $L$. In fact, the category of (suitable) modules of $\Heis$ is $\Rep(\Heis)\cong\smash{\Vect_\h}$, and the category of modules of $\V_L$, which is $\Rep(\V_L)\cong\smash{\Vect_\Gamma^Q}$ with $(\Gamma,Q)=L^*\!/L$, is the (infinite) $A$-condensation by a commutative algebra $A$ related to $L$ (see \autoref{sec_latticeDiscriminantForm}).

In \autoref{sec_VOALatticeMinusOne}, we briefly review orbifolds of \voa{}s $\V$ by a finite group $G$ of \voa{} automorphisms: the fixed-point vertex operator subalgebra $\smash{\V^G}$ is called \emph{orbifold}, and computing the modular tensor category $\smash{\Rep(\V^G)}$ is an important and challenging task (see, e.g., \cite{DVVV89,DPR90} for early examples). Under favourable conditions, there is an associated braided $G$-crossed tensor category $\cC=\smash{\Rep^G(\V)}=\smash{\bigoplus_{g\in G}}\cC_g$ (referred to as the category of $g$-twisted $\V$-modules for $g\in G$) with $\Rep(\V)=\cC_1$, and $\smash{\Rep(\V^G)}=\cC\sslash G$ is the $G$-equivariantisation of $\cC$. As equivariantisation is a relatively straightforward and reversible operation (although it increases the complexity of the category), the main challenge lies in determining the braided $G$-crossed tensor category $\cC$.

Then, we demonstrate \autoref{idea_1} on the \voa{} side. We already briefly discussed the special case of odd discriminant forms (see \autoref{thm_latTY} and \autoref{cor:latTY}) above; here we summarise the general case (which also uses \autoref{idea_2}). We consider the lattice \voa{} $\V_L$ for a positive-definite, even lattice $L$ (for simplicity satisfying the stronger evenness condition in \autoref{sec:prel}, but now with $\Gamma=L^*\!/L$ possibly of even order) and compute the modular tensor category $\smash{\Rep(\V_L^G)}$ associated with a lift $G\cong\Z_2$ of the action of $\langle-\id\rangle$ on $L$.
\begin{introTheorem}[\ref{thm_latmain}]
The $\Z_2$-crossed ribbon fusion category (with positive quantum dimensions) of untwisted and twisted modules of the lattice \voa{} $\V_L$ under the above action of $G\cong\Z_2$ is given by
\begin{equation*}
\Vect_\Gamma^Q[\Z_2,\eps]
\end{equation*}
with $\eps=+1\in\{\pm1\}$ from \autoref{thm_evenTY_here}, constructed in \cite{GLM24b}.
\end{introTheorem}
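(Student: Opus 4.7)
The plan is to combine \autoref{idea_1} and \autoref{idea_2} applied to the realisation of the lattice \voa{} as a conformal extension of a Heisenberg \voa{}. Under our regularity assumptions, standard \voa{} orbifold theory (see \cite{McR21b}) endows $\Rep^G(\V_L)=\bigoplus_{g\in G}\Rep^g(\V_L)$ with the structure of a braided $G$-crossed ribbon extension of $\Rep(\V_L)\cong\Vect_\Gamma^Q$, where $G\cong\Z_2$ and the induced action on $\Gamma=L^*\!/L$ is multiplication by $-1$. By the classification in \autoref{thm_ourENO_braided}, such extensions are, up to equivalence, parametrised by $H^3(\Z_2,\C^\times)\cong\Z_2$; imposing pseudo-unitarity cuts this down further. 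Since \autoref{thm_evenTY_here} constructs exactly two braided $\Z_2$-crossed extensions $\Vect_\Gamma^Q[\Z_2,\eps]$ with positive quantum dimensions and all \voa{} modules have positive quantum dimensions, $\Rep^{\Z_2}(\V_L)$ must coincide with one of these.

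To identify the correct sign I would invoke \autoref{idea_2}. The lattice \voa{} $\V_L$ is obtained as a (typically infinite) commutative algebra condensation of $\V_\h$ by $A_L=\bigoplus_{\lambda\in L}\C_\lambda\in\Rep(\V_\h)$, as explained in \autoref{sec_latticeDiscriminantForm}. The lift $G\cong\Z_2$ of $-\!\id\in\OO(L)$ to $\V_L$ restricts on $\V_\h$ to the standard lift of $-\!\id\in\OO(\h)$; its (formal) $\Z_2$-crossed extension is the infinite Tambara-Yamagami-type category of \autoref{sec_idea2_infTY}, and $A_L$ carries a canonical $G$-equivariant algebra structure with respect to it. Applying \autoref{thm_currentExtVsCrossedExt} produces a commuting square whose lower right corner realises $\Rep^{\Z_2}(\V_L)$ as the $G$-local condensation $\cC_{A_L}^\loc$. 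By construction in \autoref{sec_idea2_infTY}, this $G$-local condensation is precisely the category $\Vect_\Gamma^Q[\Z_2,+1]$ defined via \autoref{thm_evenTY_here}, completing the identification.

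As an independent check, and paralleling the odd-order argument of \autoref{thm_latTY}, the sign $\eps$ can be read off from the $\Z_2$-ribbon twists on the twisted sector: on the \voa{} side these are determined by the lowest $L_0$-weights of the irreducible $(-\!\id)$-twisted $\V_L$-modules, classically known (e.g.\ \cite{FLM88}) and built from the shift $d/16$ (where $d=\rk L$) together with contributions from $L$ that categorically encode $\sign(\Gamma,Q)$. Comparing with the $\Z_2$-ribbon twist eigenvalues of $\Vect_\Gamma^Q[\Z_2,\eps]$ recalled from \autoref{sec_TYEven} again selects $\eps=+1$.

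The main obstacle is precisely the sign tracking in the commuting square of \autoref{thm_currentExtVsCrossedExt}: the conventions for the standard lift of $-\!\id$ on $\V_\h$, for the canonical $G$-equivariant structure on $A_L$, and for the Gauss-sum phases appearing in the construction of the infinite Tambara-Yamagami-type category all have to be matched up carefully so that the output is $\eps=+1$ rather than $\eps=-1$. Secondary technical points, for which we rely on existing results, are the rigorous existence of $\Rep^G(\V_L)$ as a braided $G$-crossed ribbon category and the rigorous treatment of the infinite condensation $\Rep(\V_\h)\leadsto\Rep(\V_L)$ (see \cite{CMY22} and \autoref{sec_latticeDiscriminantForm}).
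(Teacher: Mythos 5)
Your setup (existence of $\Rep^G(\V_L)$ as a braided $\Z_2$-crossed ribbon extension via \cite{McR21b}, then the classification of \autoref{thm_ourENO_braided} reducing to the two candidates $\Vect_\Gamma^Q[\Z_2,\pm1]$ of \autoref{thm_evenTY_here}) matches the paper. But your \emph{primary} mechanism for fixing the sign does not work as a proof. The commuting square of \autoref{thm_currentExtVsCrossedExt} applied to $\Rep(\V_\h)\leadsto\Rep(\V_L)$ requires as input the braided $\Z_2$-crossed extension of $\Rep(\V_\h)\cong\Vect_{\R^d}^{\DiscQ}$, i.e.\ the ``infinite Tambara-Yamagami category''. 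The paper is explicit that this object is not rigorously defined (infinite direct sums in $\X\otimes\X$, an ill-defined normalisation $|\Gamma|^{-1/2}=|\R^d|^{-1/2}$ in the $\X\X\X$-associator), and \autoref{sec_idea2_infTY} is used only as a heuristic to \emph{produce the coherence data} that \cite{GLM24b} then takes as an ad-hoc definition. Moreover, even granting the heuristic, the condensation preserves the sign ($\eps=\bar\eps$, cf.\ \autoref{prop_TYtoTY}), so saying ``the $G$-local condensation is precisely $\Vect_\Gamma^Q[\Z_2,+1]$'' merely relocates the question to the undetermined sign $\bar\eps$ of the Heisenberg-orbifold category; nothing in the condensation itself outputs $+1$.

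What you present as an ``independent check'' is in fact the paper's actual proof, and you should promote it to the main argument: by \autoref{lem_GribbonVOA} the $G$-ribbon twist on each simple twisted object $\X^{\bar x}$ is an eigenvalue of $e^{2\pi\i L_0}$ on the corresponding irreducible $g$-twisted module; since $g$ lifts $-\!\id$, all such modules have $L_0$-weights in $\rk(L)/16+\tfrac12\N$ \cite{DL96}, so $\beta^2=\e(\rk(L)/8)$, while categorically $\beta^2=1/\alpha^2=\eps\,\e(\rk(L)/8)$, forcing $\eps=+1$. (Your phrasing about ``contributions from $L$ that categorically encode $\sign(\Gamma,Q)$'' is unnecessary here: since $L$ is positive definite one only needs that the lattice contributions lie in $\tfrac12\Z$, so that the \emph{square} of the twist is the same on all twisted simples.) One further small correction: pseudo-unitarity does not cut down the set of extensions below two --- both $\eps=\pm1$ admit positive quantum dimensions (via $\beta=\eps/\alpha$); it only fixes the $G$-ribbon structure on each.
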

Then the modular tensor category $\smash{\Rep(\V_L^G)}$ is given by the equivariantisation $\smash{\Vect_\Gamma^Q}[\Z_2,\eps]\sslash\Z_2$ (see \autoref{cor_latmain}), described in \autoref*{GLM2sec_equiv} of \cite{GLM24b}. In \autoref*{GLM2sec_latticedata} of \cite{GLM24b}, we also give a description of $\smash{\Rep^G(\V_L)}\cong\smash{\Vect_\Gamma^Q}[\Z_2,\eps]$ in terms of the lattice data. The fusion rules and simple objects were already determined in \cite{ADL05}; here we determine the complete category including the coherence data.

\medskip

Finally, in \autoref{sec_VOAIdea2} we demonstrate \autoref{idea_2} on the \voa{} side. Besides the infinite condensations, on which \autoref{thm_latmain} is (nonrigorously) based (see \autoref{sec_VOAevenTV}), a main example of condensations by commutative algebras are simple-current extensions, like conformal extensions of lattice \voa{}s $\V_L\subset\V_{L'}$ for positive-definite, even lattices $L\subset L'$ of the same rank.

In \autoref{sec_VOAIdea2_TYtoTY} we apply this idea to two Tambara-Yamagami categories, as was described categorically above (see \autoref{sec_TYtoTY}).

In \autoref{sec_VOAIdea2_Els} we briefly mention how to use the fact that for any lattice $L$ with an action of $\Z_2$ as isometries there is a finite-index sublattice
\begin{equation*}
L_+\oplus L_- \subset L
\end{equation*}
such that $\Z_2$ acts on $L_\pm$ by $\pm 1$, where $L_\pm=\mathfrak{h}_\pm \cap L$ are the intersections of $L$ with the eigenspaces of $\Z_2$ acting on $\mathfrak{h}= L\otimes_\Z\R$. For this sublattice, by the previous results, we can describe the braided $\Z_2$-crossed extension
\begin{align*}
\Rep^G(\V_{L_+\oplus L_-})&\cong
\bigl(\Vect_{\Gamma_+}\oplus\Vect_{\Gamma_+}\bigr)\boxtimes_{\Z_2}\Vect_{\Gamma_-}^{Q_-}[\Z_2,\eps]\\
&\cong\bigl(\Vect_{\Gamma_+}^{Q_+}\boxtimes\Vect_{\Gamma_-}^{Q_-}\bigr)\oplus\bigl(\Vect_{\Gamma_+}\boxtimes\Vect\bigr)
\end{align*}
with $\Gamma_+\coloneqq L_+^*/L_+$ and $\Gamma_-\coloneqq L_-^*/L_-$. Hence, by \autoref{thm_currentExtVsCrossedExt} or \autoref{idea_2}, the braided $G$-crossed tensor category $\smash{\Rep^G(\V_L)}$ we are looking for can be obtained as the $G$-local modules over the algebra in $\smash{\Vect_{\Gamma_+}\boxtimes\Vect_{\Gamma_-}}$ associated with~$L$.

In fact, this is morally also the idea behind the \voa{} results in \cite{BE15,Els17}, where the fusion rules for the $\Z_2$-twisted modules for an arbitrary $\Z_2$-action on a lattice are computed (more precisely, on the level of the equivariantisation). Our result, in principle, gives directly the braided $\Z_2$-crossed extensions of $\Rep(\V_L)=\smash{\Vect_\Gamma^Q}$, $\Gamma=L^*\!/L$, including all coherence data.


\subsection{Notation}

All vector spaces and (vertex, Lie, etc.) algebras will be over the base field $\C$, unless stated otherwise. All categories are enriched over $\Vect=\smash{\Vect_\C}$. (For the categorical statements, we could equally well take any algebraically closed field of characteristic zero.) By $\Z_n$ we shall always mean the cyclic group $\Z/n\Z$. We use the notation $\e(x)=e^{2\pi\i x}$.

Throughout, we denote by $G$ a (usually finite) group, written multiplicatively.
In general, $g$ denotes an arbitrary element of $G$, but in the special case of $G=\langle g\rangle\cong\Z_2$ we denote by $g$ the nontrivial element of $G$. By contrast, $\Gamma$ always denotes an abelian group that is written additively.
Quadratic forms on $\Gamma$ are usually written multiplicatively with values in $\C^\times$; they are related to the usual notion of quadratic forms with values in $\Q/\Z$ or $\R/\Z$ via $\e(\cdot)$.


\subsection{Preliminaries}\label{sec:prel}

By a (rational) \emph{lattice}, we mean a free abelian group $L$ of finite rank equipped with a nondegenerate bilinear form $\langle\cdot,\cdot\rangle\colon L\times L\to\Q$. The lattice $L$ is called \emph{integral} if $\langle v,w\rangle\in\Z$ for all $v,w\in L$. If $\langle v,v\rangle\in 2\Z$ for all $v\in L$, then $L$ is called \emph{even}. An integral lattice that is not even is called \emph{odd}. Given a lattice $L$, we denote by $L^*=\{v\in L\otimes_\Z\Q\mid\langle v,w\rangle\in\Z\text{ for all }w\in L\}$ the \emph{dual} of $L$, embedded via $\langle\cdot,\cdot\rangle$ into the ambient space of $L$.

Further, we call a lattice \emph{strongly even}, as in \autoref*{GLM2ass_strongeven} of \cite{GLM24b}, if $\langle v,w\rangle\in 2\Z$ for all $v,w\in L$.

If $L$ is even, then $L\subset L^*$ and the quotient $L^*\!/L$ is a finite, abelian group endowed with a nondegenerate quadratic form $Q\colon L^*\!/L\to\C^\times$ given by $Q(v+L)=\e(\langle v,v\rangle/2)$ for all $v\in L^*$. In that situation, we call $L^*\!/L$ the \emph{discriminant form} of $L$.

More generally, we call any pair $(\Gamma,Q)$ of a finite, abelian group $\Gamma$ together with a nondegenerate quadratic form $Q\colon\Gamma\to\C^\times$ a \emph{discriminant form} (sometimes called \emph{metric group}). The function $Q$ being quadratic means that $\smash{Q(na)=Q(a)^{n^2}}$ for all $a\in\Gamma$ and $n\in\Z$ and that $B_Q\colon\Gamma\times\Gamma \to\C^\times$ with $B_Q(a,b)\coloneqq Q(a+b)Q(a)^{-1}Q(b)^{-1}$ for $a,b\in\Gamma$ is bimultiplicative (and nondegenerate). We call $B_Q$ the \emph{associated bimultiplicative form}. Any discriminant form $(\Gamma,Q)$ can be realised as $L^*\!/L$ for some even lattice~$L$ \cite{Nik80}.

Given a discriminant form $(\Gamma,Q)$, a subgroup $I$ is called \emph{isotropic} if $Q(i)=1$ for all $i\in I$. In that case, $I^\perp\coloneqq\{a \in \Gamma\mid B_Q(a,i) = 1\text{ for all }i \in I\}$ is a subgroup of $\Gamma$ containing $I$, and the quotient $I^\bot/I$ naturally inherits the structure of a discriminant form, namely equipped with the nondegenerate quadratic form induced by $Q$.


\subsection*{Acknowledgements}

We are happy to thank Yoshiki Fukusumi, Terry Gannon, Hannes Knötzele, Christian Reiher, Andrew Riesen and Christoph Schweigert for valuable discussions. César Galindo expresses his gratitude for the hospitality and excellent working conditions provided by the Department of Mathematics at the University of Hamburg, where part of this research was conducted as a fellow of the Humboldt Foundation. Simon Lentner would like to express his gratitude for the hospitality of Thomas Creutzig and Terry Gannon at the University of Alberta.

César Galindo was partially supported by Grant INV-2023-162-2830 from the School of Science of Universidad de los Andes.  Simon Lentner was supported by the Humboldt Foundation. Sven Möller acknowledges support from the DFG through the Emmy Noether Programme and the CRC 1624 \emph{Higher Structures, Moduli Spaces and Integrability}, project numbers 460925688 and 506632645. Sven Möller was also supported by a Postdoctoral Fellowship for Research in Japan and Grant-in-Aid KAKENHI 20F40018 by the \emph{Japan Society for the Promotion of Science}.


\section{\texorpdfstring{$G$}{G}-Crossed Extensions of Braided Tensor Categories}\label{sec_ENO}

In this section, we introduce the main categorical notions used in this text, particularly braided $G$-crossed tensor categories and braided $G$-crossed extensions. We also state the results from \cite{ENO10,DN21} on the classification of braided $G$-crossed extensions. For a detailed discussion of ribbon structures and pseudo-unitarity in the context of braided $G$-crossed tensor categories, we refer the reader to \cite{GLM24b} (see also \cite{Gal22}).


\subsection{Tensor Categories}

In this article, tensor categories are $\C$-linear abelian monoidal categories, similar to \cite{DGNO10}. Further, we consider braided tensor categories, rigid tensor categories and ribbon categories. A fusion category is a tensor category that is finite, semisimple, rigid and has a simple tensor unit, as in \cite{EGNO15}. A modular tensor category is a ribbon fusion category with nondegenerate braiding. We define module and bimodule categories as in \cite{EGNO15}.

A rigid tensor category is called \emph{pointed} if every simple object is $\otimes$-invertible. In the following, we consider the typical description of pointed (braided) fusion categories associated with finite (abelian) groups and some group cohomological data (see, e.g., \cite{JS93,EGNO15}).

\begin{ex}[Pointed Fusion Categories]\label{ex_tensorVect}
Suppose that $G$ is a finite group and $\omega\in Z^3(G,\C^\times)$ a $3$-cocycle. We denote by $\smash{\Vect_G^\omega}$ the pointed fusion category of $G$-graded vector spaces, where the simple objects are $\C_g$, $g\in G$, and the tensor product is given by $\C_g\otimes\C_h=\C_{gh}$ with associator
\begin{equation*}
(\C_g\otimes\C_h)\otimes\C_k\overset{\omega(g,h,k)}{\longrightarrow}\C_g\otimes(\C_h\otimes\C_k)
\end{equation*}
for all $g,h,k\in G$. The pentagon identity for the associator holds exactly because $\omega$ is a $3$-cocycle.
\end{ex}

\begin{ex}[Pointed Braided Fusion Categories]\label{ex_braidedVect}
Let $\Gamma$ be a finite, abelian group and $(\sigma,\omega)\in Z_{\mathrm{ab}}^3(\Gamma,\C^\times)$ an abelian $3$-cocycle on $\Gamma$ \cite{EM50}. We denote by $\smash{\Vect_\Gamma^{\sigma,\omega}}$ the pointed braided fusion category given by the fusion category $\smash{\Vect_\Gamma^\omega}$ equipped with the braiding
\begin{equation*}
\C_a\otimes\C_b\overset{\sigma(a,b)}{\longrightarrow}\C_b\otimes\C_a
\end{equation*}
for $a,b\in\Gamma$. The hexagon identity for the braiding corresponds to the defining property of an abelian $3$-cocycle, i.e.
\begin{equation}\label{eq: abelian cocycle equations}
\frac{\omega(b,a,c)}{\omega(a,b,c)\omega(b,c,a)}=\frac{\sigma(a,b+c)}{\sigma(a, b)\sigma(a,c)},\quad\frac{\omega(a,b,c)\omega(c,a,b)}{\omega(a,c,b)}=\frac{\sigma(a+b,c)}{\sigma(a,c)\sigma(b,c)}
\end{equation}
for $a,b,c\in\Gamma$. For $\omega=1$, it states that $\sigma$ is bimultiplicative; otherwise, $\omega$ measures its deviation from bimultiplicativity.

Abelian $3$-coboundaries are given by $(\sigma_\kappa,\d\kappa)$ with $\sigma_\kappa(a,b)\coloneqq\kappa(a,b)\kappa(b,a)^{-1}$ for any function $\kappa\colon\Gamma\times\Gamma\to\C^\times$, and they correspond to braided tensor equivalences $\smash{\Vect_\Gamma^{\sigma,\omega}}\cong\smash{\Vect_\Gamma^{\sigma\sigma_\kappa,\omega\d\kappa}}$ that map each object $\C_a$ to itself but with a possibly nontrivial tensor structure given by $\kappa$. Thus, cohomology classes of abelian $3$-cocycles on $\Gamma$ correspond to equivalence classes of braided tensor categories on the abelian category $\smash{\Vect_\Gamma}$ with the given tensor product.

As shown in \cite{EM50}, the abelian $3$-cocycles $(\sigma,\omega)$, up to coboundaries, correspond bijectively to quadratic forms $Q\colon\Gamma\to\C^\times$. Recall that $B_Q\colon\Gamma\times\Gamma\to\C^\times$ with $B_Q(a,b)=Q(a+b)Q(a)^{-1}Q(b)^{-1}$ for $a,b\in\Gamma$ denotes the associated bimultiplicative form. In this correspondence, $B_Q(a,b)=\sigma(a,b)\sigma(b,a)$ is identified as the double-braiding and $Q(a)=\sigma(a,a)$ as the self-braiding. If the quadratic form $Q$ is nondegenerate, meaning that $B_Q$ is, then the braiding is nondegenerate. If no explicit choices of $(\sigma,\omega)$ are made, we denote the corresponding equivalence class of braided tensor categories by $\smash{\Vect_\Gamma^Q}$.

There is a rigid structure $\C_a^*=\C_{-a}$, $a\in\Gamma$, with the obvious evaluation and coevaluation. There is a canonical choice of a ribbon structure $\theta_{\C_a}=Q(a)$. As is discussed in more detail in \autoref*{GLM2sec_GRibbon} of \cite{GLM24b}, this is the unique choice of ribbon structure for which all simple objects $\C_a$ have quantum dimension~$1$, aligning with the Frobenius-Perron dimension. This is known as the pseudo-unitary choice. All possible ribbon structures differ from this by a homomorphism $\Gamma\to\{\pm1\}$.

In the particular case when $|\Gamma|$ is odd, the cohomology class associated with the quadratic form $Q$ can be represented by a distinguished abelian $3$-cocycle $(\sigma,\omega)$ with $\omega=1$ and $\sigma(a,b)=\smash{B_Q^{1/2}(a,b)}$ for $a,b\in\Gamma$. Here, we recall that when the abelian group $\Gamma$ has odd order, every bimultiplicative map $B\colon\Gamma\times\Gamma\to\C^\times$ has a unique bimultiplicative square root $B^{1/2}$. (An analogous statements holds for quadratic forms.)
\end{ex}

\medskip

We also consider group actions on tensor categories:
\begin{defi}\label{def_Gcrossed}
Let $\cC$ be a tensor category with the associativity constraint given by $(X\otimes Y)\otimes Z\overset{\alpha}{\to}X\otimes (Y\otimes Z)$ and $G$ a finite group. Then a \emph{$G$-action} on $\cC$ consists of the following data:
\begin{enumerate}[label=(\roman*)]
\item For every element $g\in G$, a functor
\begin{equation*}
g_*\colon\cC\to\cC.
\end{equation*}
We denote the image of an object $X\in\cC$ by $g_*(X)$ and the image of a morphism $f$ by $g_*(f)$.
\item For every pair of elements $g,h\in G$, a natural isomorphism
\begin{equation*}
T_2^{g,h}(X)\colon(gh)_*(X)\to g_*(h_*(X))
\end{equation*}
such that associativity holds: for all $g,h,l\in G$,
\begin{equation*}
T_2^{g,h}(l_*(X))\circ T_2^{gh,l}(X)=g_*(T_2^{h,l}(X))\circ T_2^{g,hl}.
\end{equation*}
\item A tensor structure $\tau^g$ on each functor $g_*$. That is, for every $g\in G$ and every pair of objects $X,Y\in\cC$, a natural isomorphism
\begin{equation*}
\tau^g_{X,Y}\colon g_*(X\otimes Y)\to g_*(X)\otimes g_*(Y)
\end{equation*}
such that the diagrams
\begin{equation*}
\begin{tikzcd}[column sep=small]
g_*((X \!\otimes\! Y) \!\otimes\! Z) \ar[r, "\tau^g"] \ar[d, "g_*(\alpha)"'] & g_*(X \!\otimes\! Y) \!\otimes\! g_*(Z) \ar[r, "\tau^g"] & (g_*(X) \!\otimes\! g_*(Y)) \!\otimes\! g_*(Z) \ar[d, "\alpha"] \\
g_*(X \!\otimes\! (Y \!\otimes\! Z)) \ar[r, "\tau^g"'] & g_*(X) \!\otimes\! g_*(Y \!\otimes\! Z) \ar[r, "\tau^g"'] & g_*(X) \!\otimes\! (g_*(Y) \!\otimes\! g_*(Z))
\end{tikzcd}
\end{equation*}
and
\begin{equation*}
\begin{tikzcd}
(gh)_*(X \otimes Y) \ar[r, "\tau^{gh}"] \ar[d, "T_2^{g,h}(X \otimes Y)"'] & (gh)_*(X) \otimes (gh)_*(Y) \ar[d, "T_2^{g,h}(X) \otimes T_2^{g,h}(Y)"] \\
g_*(h_*(X \otimes Y)) \ar[r, "\tau^g \tau^h"'] & g_*(h_*(X)) \otimes g_*(h_*(Y))
\end{tikzcd}
\end{equation*}
commute for all $g,h\in G$.
\end{enumerate}
A $G$-action is called \emph{strict} if all the monoidal functors $(g_*, \tau^g)$ are strict and the natural transformations $T_2$ are equalities.
\end{defi}

\begin{rem}
The data of an action of $G$ on $\cC$ is equivalent to having a tensor functor from the discrete tensor category $\smash{\underline{G}}$ (where the set $G$ serves as objects and the tensor product is defined by the multiplication in $G$) to the tensor category of tensor functors $\smash{\underline{\Aut}_\otimes(\cC)}$ along with monoidal natural isomorphisms.
\end{rem}

\begin{defi}
Let $\cC$ be a tensor category and $G$ a finite group acting on $\cC$. The \emph{$G$-equivariantisation} of $\cC$, denoted by $\cC\sslash G$, is the tensor category with objects pairs $(X,\eqi)$, where $X\in\cC$ and $\eqi$ is a family of isomorphisms, $\eqi_g\colon g_*(X) \to X$ for each $g \in G$, satisfying the following condition:
\begin{equation*}
\eqi_{gh}=\eqi_g\circ g_*(\eqi_h)\circ T_2^{g,h}.
\end{equation*}
Morphisms $f\colon(X,\eqi)\to(Y,\eqi')$ in $\cC\sslash G$ satisfy
\begin{equation*}
\eqi'_g\circ g_*(f)=f\circ\eqi_g.
\end{equation*}
The tensor product is $(X,\eqi)\otimes(Y,\eqi')=(X\otimes Y,\eqi'')$ with
\begin{equation*}
\eqi''_g=\eqi_g\otimes\eqi'_g\circ\tau^g_{X,Y},
\end{equation*}
and the tensor unit is $(\unit,\id_\unit)$.
\end{defi}

The simple objects of $\cC\sslash G$ can be parametrised as follows. The group $G$ acts on the set $\Irr(\cC)$ of equivalence classes of simple objects. If we fix a representative $X_i$ for every orbit and denote the corresponding stabiliser subgroup by $G_i$, we can choose isomorphisms $t_g\colon g_*(X_i)\to X_i$. The action restricted to $G_i$ defines a $2$-cocycle $\chi_i\in Z^2(G_i,\C^\times)$ by
\begin{equation}\label{eq: equivariant}
\chi_i(g,h)\id_{X_i}=t_{gh}^{-1}\circ t_g\circ g_*(t_h)\circ T_2^{g,h}.
\end{equation}
The cohomology class of $\chi_i$ does not depend on the choice of $\{t_g\mid g\in G_i\}$. The simple objects of $\cC\sslash G$, up to isomorphism, are in bijective correspondence with isomorphism classes of irreducible $\chi_i$-projective representations of $G_i$ for every $i$. For more details on the correspondence, see \cite{BN13}.
\begin{ex}\label{Ex: classification equivariant simple order 2}
Let $G=\langle g\rangle$ be a cyclic group of order~$2$ acting on a tensor category~$\cC$. The classification of simple objects of $\cC\sslash G$ in this case is explicit and straightforward. If a simple object $X$ has a trivial stabiliser, then $X^g\coloneqq X\oplus g_*(X)$ is an equivariant object with $\eqi_g=\id_{g_*(X)}\oplus T_2^{g,g}(X)$. If the stabiliser is $G$, then take $t_g\colon g_*(X)\to X$ and, using the equation \eqref{eq: equivariant}, define $\gamma=\chi(g,g)^{-1/2}$. The isomorphisms $\eqi_g^{(\pm)}=\pm\gamma t_g\colon g_*(X)\to X$ endow $X$ with two nonisomorphic equivariant structures.
\end{ex}


\subsection{\texorpdfstring{$G$}{G}-Crossed Tensor Categories}\label{sec_Gcrossed}

Given a tensor category $\cC$ and a group $G$, a (faithful) $G$-grading on $\cC$ is a decomposition $\cC=\bigoplus_{g\in G}\cC_g$ such that the tensor product $\otimes$ maps $\cC_g\times\cC_h$ to $\cC_{gh}$, the unit object is in $\cC_1$ and $\cC_g\neq0$ for all $g\in G$.

The central object in this text are braided $G$-crossed tensor categories (see, e.g., \cite{Tur00,EGNO15}), which we introduce in the following.
\begin{defi}
A \emph{braided $G$-crossed tensor category} is a tensor category $\cC$ endowed with the following structures:
\begin{enumerate}[label=(\roman*)]
\item an action of $G$ on $\cC$,
\item a faithful $G$-grading $\cC=\oplus_{g\in G}\cC_g$,
\item isomorphisms, called the \emph{$G$-braiding},
\begin{equation*}
c_{X,Y}\colon X\otimes Y\to g_*(Y)\otimes X
\end{equation*}
for $g\in G$, $X\in\cC_g$, $Y\in\cC$, natural in $X$ and $Y$.
\end{enumerate}
These structures must satisfy the following conditions:
\begin{enumerate}[label=(\alph*)]
\item $g_*(\cC_h)\subset\cC_{ghg^{-1}}$ for all $g, h \in G$.
\item The diagrams
\begin{equation*}\label{axiom 1 trenza}
\begin{tikzcd}
g_*(X\otimes Y) \ar{dd}{\tau^g_{X,Y}} \ar{rr}{g_*(c_{X,Y})} && g_*(h_*(Y)\otimes X)\ar{dd}{T_2^{ghg^{-1},h}(T_2^{g,h})^{-1}\tau^g_{h_*Y,X}}\\\\
g_*(X)\otimes g_*(Y) \ar{rr}{c_{g_*(X),g_*(Y)}} && (ghg^{-1})_*g_*(Y)\otimes g_*(X)
\end{tikzcd}
\end{equation*}
commute for all $g,h\in G$, $X\in\cC_h$, $Y\in\cC$.
\item The diagrams
\begin{equation*}\label{trenzas GG}
\begin{tikzcd}
X \otimes Y\otimes Z \ar{rr}{c_{X,Y\otimes Z}} \ar{d}{c_{X,Y}\otimes\id_{Z}} && g_*(Y\otimes Z)\otimes X \ar{d}{\tau^g_{Y,Z}} \\
g_*(Y)\otimes X\otimes Z \ar{rr}{\id_{g_*(Y)}\otimes c_{X,Z}}&& g_*(Y)\otimes g_*(Z) \otimes X
\end{tikzcd}
\end{equation*}
commute for all $g\in G$, $X\in\cC_g$, $Y,Z\in\cC$ and the diagrams
\begin{equation*}\label{trenzas 2}
\begin{tikzcd}
X \otimes Y\otimes Z \ar{rr}{c_{X\otimes Y, Z}} \ar{d}{\id_X\otimes c_{Y,Z}} && (gh)_*(Z)\otimes X\otimes Y \ar{d}{T_2^{g,h}} \\
X\otimes h_*(Z)\otimes Y\ar{rr}{c_{X,h_*(Z)}\otimes \id_Y}&& g_*h_*(Z)\otimes X \otimes Y
\end{tikzcd}
\end{equation*}
commute for all $g,h\in G$, $X\in\cC_g$, $Y\in\cC_h$, $Z\in\cC$.
\end{enumerate}
Here, for simplicity, we omitted the associativity constraint isomorphisms.
\end{defi}
The definition of equivalence of braided $G$-crossed tensor categories is given in \cite{G17}, Section~5.2.

We call $\cC$ a \emph{braided $G$-crossed extension} of a braided tensor category $\cB$ if $\cC_1=\cB$.

\medskip

A main application of braided $G$-crossed tensor categories is the following, which can be found in the present context in \cite{Tur10b} (appendix by Michael Müger):
\begin{prop}[\cite{DGNO10}]
Let $\cC$ be a braided $G$-crossed tensor category. The equivariantisation $\cC \sslash G$ is a braided tensor category that contains a fusion subcategory braided equivalent to the symmetric category $\Rep(G)$. This corresponds to those equivariant objects $(V,(\eqi_g)_{g\in G})$ where $V$ is in the tensor subcategory generated by the unit object of $\cC$.

Conversely, for every braided tensor category $\cD$ that contains the symmetric fusion category $\Rep(G)$ with trivial braiding, we can define a braided $G$-crossed tensor category $\cC\coloneqq\cD_G$ as the \emph{de-equivariantisation} by $G$. This is the tensor category of modules over $\smash{\C^G}$, the algebra of functions on $G$, which is an algebra object in $\Rep(G)$ and thus in $\cD$.
\end{prop}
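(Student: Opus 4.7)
The plan is to handle the two directions separately and then indicate why the constructions are mutually quasi-inverse.

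For the forward direction, given a braided $G$-crossed tensor category $\cC = \bigoplus_{g \in G} \cC_g$, I would define a braiding on the equivariantisation $\cC \sslash G$ as follows. For equivariant objects $(X, \eqi)$ and $(Y, \eqi')$, decompose $X = \bigoplus_g X_g$ along the grading; on each homogeneous component $X_g \in \cC_g$, form the composite
$$
X_g \otimes Y \xrightarrow{\,c_{X_g, Y}\,} g_*(Y) \otimes X_g \xrightarrow{\,\eqi'_g \otimes \id\,} Y \otimes X_g,
$$
and sum over $g$ to obtain a candidate braiding morphism $X \otimes Y \to Y \otimes X$. Two verifications are required: that this is a morphism of equivariant objects (which combines axiom (b) of the $G$-crossed braiding with the cocycle conditions on $\eqi$ and $\eqi'$), and that it satisfies the hexagon identities of a braiding (which follows from axiom (c)). To identify $\Rep(G) \hookrightarrow \cC \sslash G$, I would single out the equivariant objects of the form $(W \otimes \unit, \rho)$ with $W$ a vector space and $\rho\colon G \to \GL(W)$. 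These span the full additive-tensor subcategory generated by $\unit \in \cC_1 = \cB$, on which the $G$-crossed braiding reduces to the braiding of $\cB$, which in turn restricts to the trivial swap on multiples of $\unit$. Hence the embedded copy of $\Rep(G)$ is symmetric with trivial braiding.

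For the reverse direction, let $\cD$ contain $\Rep(G)$ with trivial braiding, and consider the algebra $A = \C^G$ of functions on $G$, an object of $\Rep(G) \subset \cD$. Triviality of the braiding on $\Rep(G)$ makes $A$ commutative in $\cD$. I would define $\cC \coloneqq \cD_G \coloneqq \Mod_A(\cD)$ and equip it with braided $G$-crossed structure in three steps: \emph{(i)} the $G$-grading from the primitive idempotents $e_g \in A$, setting $\cC_g$ to be the full subcategory of $A$-modules supported on $e_g$; \emph{(ii)} the $G$-action induced by transport along the $G$-action on $A$, which combined with the tensor product $\otimes_A$ of graded modules yields the conjugation rule $g_* \cC_h = \cC_{ghg^{-1}}$; and \emph{(iii)} the $G$-crossed braiding $c_{M,N}\colon M \otimes_A N \to g_*(N) \otimes_A M$ for $M \in \cC_g$, induced from the braiding of $\cD$ by identifying how the half-braiding of $e_g \in \Rep(G) \subset \cD$ with $N$ recovers the $G$-action on $N$.

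Finally, the two constructions are mutually quasi-inverse by standard Tannakian descent: $A = \C^G$ is the regular algebra representing the forgetful functor $\Rep(G) \to \Vect$, so equivariantising $\Mod_A(\cD)$ returns $\cD$, and conversely de-equivariantising $\cC \sslash G$ at the canonical copy of $\Rep(G)$ constructed above recovers $\cC$ with its braided $G$-crossed structure. The main obstacle is the coherence bookkeeping in the reverse direction: in particular, recovering the conjugation rule $g_* \cC_h = \cC_{ghg^{-1}}$ (rather than a bare left-translation rule) from the action on idempotents $\{e_g\}$ requires a careful analysis of how the half-braidings of $\Rep(G)$-objects with general objects of $\cD$ interact with both the algebra structure of $A$ and the tensor product $\otimes_A$, and this is also where the strict triviality (not merely symmetry) of the braiding on $\Rep(G) \subset \cD$ enters essentially.
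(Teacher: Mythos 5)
The paper itself offers no proof of this proposition: it is quoted from \cite{DGNO10} (and Müger's appendix to \cite{Tur10b}), so there is nothing to compare against line by line; I can only assess your argument on its own terms. Your forward direction is the standard construction and is fine as a sketch: the braiding $c_{X_g,Y}$ followed by $\eqi'_g\otimes\id$ is exactly the formula the paper itself uses later (in the proof of \autoref{thm_currentExtVsCrossedExt}(c)), the two verifications you flag are indeed where axioms (b) and (c) enter, and the identification of the equivariant objects supported on $\unit^{\oplus n}$ with $\Rep(G)$ is correct.

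The reverse direction, however, contains a genuine error in step \emph{(i)}. The primitive idempotents $e_g\in\C^G$ are \emph{not} morphisms $\unit\to A$ in $\cD$: since $A=\C^G$ carries the translation action of $G$, one has $\Hom_\cD(\unit,A)\cong\Hom_{\Rep(G)}(\C,\C^G)\cong(\C^G)^G\cong\C$, spanned only by the invariant vector $\sum_g e_g$. So ``the full subcategory of $A$-modules supported on $e_g$'' is not defined inside $\cD$, and even in situations where a decomposition of $A$ into orthogonal idempotent summands does exist as subobjects, it produces a direct-sum decomposition of $\Mod_A(\cD)$ with $\cC_g\otimes_A\cC_h=0$ for $g\neq h$, which is not a faithful $G$-grading. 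The correct mechanism is the one your step \emph{(iii)} gestures at but which must come \emph{first}: an $A$-module $(M,\rho)$ has degree $g$ precisely when its monodromy with $A$ is the translation by $g$, i.e.\ $\rho\circ c_{M,A}\circ c_{A,M}=\rho\circ(r_g\otimes\id_M)$ where $r_g$ is the translation automorphism of $\C^G$ --- this is exactly the $G$-locality condition \eqref{formula_Glocal} of \autoref{thm_currentExtVsCrossedExt} read in reverse, and it is here (not in any idempotent bookkeeping) that the assumption that $\Rep(G)\subset\cD$ has trivial double braiding with itself is used to show that the grading is multiplicative under $\otimes_A$. With the grading defined this way, your steps \emph{(ii)} and \emph{(iii)} and the Tannakian quasi-inverse argument go through as you describe.
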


\begin{rem}
The equivariantisation of a braided $G$-crossed tensor category is a nondegenerate braided tensor category if and only if the neutral component is a nondegenerate braided tensor category. For the existence of a ribbon element, see \autoref*{GLM2sec_GRibbon} in \cite{GLM24b}.
\end{rem}

\begin{ex}\label{ex_DrinfeldCenter}~
\begin{enumerate}[wide]
\item The archetypal example of a braided $G$-crossed tensor category is $\smash{\Vect_G}$, the pointed fusion category of $G$-graded vector spaces with the obvious $G$-grading. Up to equivalence, it has a unique braided $G$-crossed structure where the $G$-action is strict and given by $g_*(\C_h)=\C_{ghg^{-1}}$ for $g,h\in G$, with the $G$-braiding being the identity.

In this case, the equivariantisation $\smash{\Vect_G}\sslash G$ is canonically braided equivalent to $\mathcal{Z}(\smash{\Vect_G})$, the Drinfeld centre of $\smash{\Vect_G}$. See \cite{NNW09} for more details.

\item For an arbitrary $G$-graded extension of $\Vect$, we need to \emph{twist} the previous braided $G$-crossed tensor category and consider $\smash{\Vect_G^\omega}$ for a $3$-cocycle $\omega\in Z^3(G,\C^\times)$, as in \autoref{ex_tensorVect}. Given $g,h\in G$, define the maps
\begin{equation}\label{eq: maps for twisting g-crossed}
\begin{split}
\gamma_{g,h}(x)&\coloneqq\frac{\omega(g,h,x)\omega(ghx(gh)^{-1},g,h)}{\omega(g, hxh^{-1},h)},\\
\mu_g(x,y)&\coloneqq\frac{\omega(gxg^{-1},g,y)}{\omega(gxg^{-1},gy,g)\omega(g,x,y)}
\end{split}
\end{equation}
for $x,y\in G$. The $G$-action on $\smash{\Vect_G^\omega}$ on objects is the same as before, and the $G$-braiding is again the identity. However, the action now is nonstrict: for each $g \in G$, the constraints are
\begin{equation*}
\tau^g_{x,y}=\mu_g(x,y)\id_{g_*(\C_{xy})},\quad T_2^{g, h}(\C_x)=\gamma_{g,h}(x)\id_{g_*(h_*(\C_x))}.
\end{equation*}

The equivariantisation $\smash{\Vect_G^\omega}\sslash G$ is canonically braided equivalent to the twisted Drinfeld centre $\mathcal{Z}(\smash{\Vect_G^\omega})$. See \cite{NNW09} for more details.

\item Now, assume that $G$ is abelian; hence the $G$-action is trivial on objects. In this case, $\mu_a\in Z^2(G,\C^\times)$ is a $2$-cocycle for each $a\in G$. The cohomologies $\mu_a$ for all $a\in G$ measure whether the category $\smash{\Vect_G^\omega} \sslash G$ is pointed or not. If $\gamma_a=\delta(l_a)$ for certain $l_a\in C^1(G, \C^\times)$, then the simple objects of $\smash{\Vect_G^\omega}\sslash G$ correspond to elements in $G\times\widehat{G}$. Specifically, the pair $(a,\gamma)$ corresponds to the object $\C_a$ with the equivariant structure $\eqi_b=\gamma(b)l_a(b)\id_{\C_a}$. The tensor product is determined by the extension of $G$ by $\widehat{G}$ given by the $2$-cocycle $\gamma_{a,b}\frac{l_{ab}}{l_a l_b}\in Z^2(G,\widehat{G})$. See \cite{GJ16} for details.
\end{enumerate}
\end{ex}


\subsection{Classification of Braided \texorpdfstring{$G$}{G}-Crossed Extensions}\label{sec_ENOENO}

In this section, we recall from \cite{ENO10}, Theorem~7.12, the classification of all braided $G$-crossed fusion categories $\cC$ for a given braided fusion category $\cC_1$ with an action of a finite group~$G$. This is thoroughly discussed also in \cite{DN21}, and we remark that there the assumption of semisimplicity and rigidity is dropped, but finiteness in the sense of \cite{EGNO15} is retained (that is, they are working in the setting of finite braided tensor categories).

Given a finite group $G$, we denote by $\smash{\uuline{G}}$ the discrete monoidal $2$-category whose objects are elements of $G$ and where the $1$-morphisms and $2$-morphisms are identities, with the tensor product given by the product of $G$. Given a braided tensor category $\cB$, we denote by $\smash{\uuline{\Pic}}(\cB)$ the monoidal $2$-category of invertible module categories over $\cB$, where 1-morphisms are module equivalences and 2-morphisms are natural isomorphisms between module functors. The product in this monoidal structure is the relative tensor product of modules over $\cB$; see Definition~4.5 of \cite{ENO10}.

One of the main results of \cite{ENO10}, Theorem~7.7, states that faithful braided $G$-crossed extensions of a given fusion category $\cB$ can be classified by monoidal $2$-functors from $\smash{\uuline{G}}$ to $\smash{\uuline{\Pic}}(\cB)$.

Explicitly computing $\smash{\uuline{\Pic}}(\cC)$ can be a demanding task, even for pointed braided fusion categories. However, $\smash{\underline{\Pic}(\cC)}$ (the first truncation) as a categorical group is equivalent to $\smash{\underline{\Aut}_{\otimes,\mathrm{br}}(\cB)}$, as demonstrated in Theorem~1.1 of \cite{ENO10}. This result allows us to show, for example, that for $\smash{\Vect_\Gamma^Q}$, as in \autoref{ex_braidedVect}, with $\Gamma$ a finite, abelian group and $Q$ a nondegenerate quadratic form on it, $\Pic(\smash{\Vect_\Gamma^Q})$ is the associated orthogonal group, which is the group of automorphisms of $\Gamma$ that preserve $Q$.

\begin{thm}[\cite{DN21}, Section~8.3; \cite{ENO10}, Theorem~7.12]\label{thm_ourENO_braided}
Let $\cB$ be a finite tensor category with nondegenerate braiding, with an action of a finite group $G$ on $\cB$ by braided autoequivalences. Then there is a certain obstruction $\mathrm{O}_4\in H^4(G,\C^\times)$, and if and only if this obstruction vanishes, there exists a braided $G$-crossed tensor category $\cC$, where $\cC_1=\cB$ as a braided tensor category with $G$-action. The equivalence classes of extensions associated with the $G$-action form a torsor over $\omega\in H^3(G,\C^\times)$; see \autoref{rem: torsor H3}.
\end{thm}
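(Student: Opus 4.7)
The plan is to reduce the classification of braided $G$-crossed extensions to the general theory of monoidal $2$-functors into the Brauer-Picard $2$-category $\underline{\underline{\Pic}}(\cB)$ developed in \cite{ENO10}, and then to argue that the prescribed categorical $G$-action by braided autoequivalences rigidifies the lower layers of the associated Postnikov tower of obstructions, leaving only the top obstruction $O_4\in H^4(G,\C^\times)$ and a torsor of choices parametrised by $H^3(G,\C^\times)$. The starting point is the classification of \cite{ENO10}: faithful $G$-extensions of a fusion category $\cB$ correspond to monoidal $2$-functors $\underline{\underline{G}}\to\underline{\underline{\Pic}}(\cB)$, and the obstruction theory for such $2$-functors decomposes into cohomological layers indexed by $H^n(G,-)$ for $n=2,3,4$, with coefficients in invertibles of $\cZ(\cB)$ for the lower layers and in $\C^\times$ at the top. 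For $\cB$ with nondegenerate braiding one has $\cZ(\cB)\simeq\cB\boxtimes\cB^{\mathrm{rev}}$, and alpha-induction produces a canonical embedding $\Aut_{\otimes,\mathrm{br}}(\cB)\hookrightarrow\Pic(\cB)$ by which a braided autoequivalence of $\cB$ becomes an invertible $\cB$-bimodule using the braiding; this is the mechanism that promotes a $G$-action by braided autoequivalences into a homomorphism with target $\Pic(\cB)$.

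The key point is then that the full categorical $G$-action data $(g_*,\tau^g,T_2^{g,h})$ supplied by hypothesis does not merely yield a group homomorphism but already determines a canonical lift to a monoidal $2$-functor on the entire $2$-skeleton: the invertible $\cB$-bimodules obtained by alpha-induction inherit invertible $1$- and $2$-morphisms implementing composition tautologically, as those are built formally from the braiding on $\cB$ together with the natural isomorphisms $\tau^g$ and $T_2^{g,h}$. Consequently, the lower-dimensional choices in the Postnikov tower, which parametrise bimodule equivalences and their coherences modulo $\cZ(\cB)$-invertibles, are resolved canonically and contribute neither obstruction nor torsor. What remains is only the top-level pentagon-type data: a torsor $H^3(G,\C^\times)$ of consistent scalar modifications of the associator depending on the $G$-degree, and an obstruction $O_4\in H^4(G,\C^\times)$ measuring the failure of these scalars to satisfy a coherent pentagon. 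This matches the theorem.

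The main obstacle in executing this plan is rigorously verifying the collapse claim, namely that specifying a $G$-action by braided autoequivalences at the full $2$-categorical level does in fact trivialise the lower layers of the obstruction tower. Doing this from first principles would require computing the relevant obstruction cocycles directly from the data $(\tau^g,T_2^{g,h})$ and showing term by term that they are coboundaries, with the nondegeneracy of the braiding intervening via the splitting of $\cZ(\cB)$. A much more efficient route, and the one I would follow, is to appeal to the unified treatment in \cite{DN21}, Section~8.3, where the equivalence $\underline{\underline{\Pic}}(\cB)\simeq\underline{\underline{\Aut}}_{\otimes,\mathrm{br}}(\cZ(\cB))$ is established (including in the not necessarily semisimple finite setting), allowing the theorem to be read off as the restriction of the general ENO classification to the component of $\underline{\underline{\Pic}}(\cB)$ picked out by the prescribed braided $G$-action.
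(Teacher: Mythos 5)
Your proposal is correct and follows essentially the same route as the paper's sketch: both reduce to the classification of braided $G$-crossed extensions by monoidal $2$-functors $\uuline{G}\to\uuline{\Pic}(\cB)$, use nondegeneracy of the braiding to identify the $1$-truncation $\underline{\Pic}(\cB)$ with braided autoequivalences (so that the prescribed categorical $G$-action already fixes everything below the top layer), and then read off the remaining $H^4(G,\C^\times)$ obstruction and $H^3(G,\C^\times)$ torsor from \cite{DN21}, Section~8.3 (Proposition~2.23) and \cite{ENO10}, Theorem~8.9. The "collapse claim" you flag as the main obstacle is handled in the paper exactly as you propose, by deferring to the Davydov--Nikshych treatment rather than computing the lower obstruction cocycles directly.
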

\begin{proof}[Sketch of Proof]
By Section~8.3 in \cite{DN21}, braided $G$-crossed extensions of $\cB$ are classified up to equivalence by monoidal $2$-functors from $\smash{\uuline{G}}$ to $\smash{\uuline{\Pic}}(\cB)$. Now, since $\cB$ is nondegenerate, the $1$-truncation defines an equivalence
\begin{equation*}
\underline{\Pic}(\cB)\cong\underline{\Aut}_{\otimes,\mathrm{br}}(\mathcal{Z}(\cB)).
\end{equation*}
The obstruction to lifting the action $\smash{\underline{G}}\to\smash{\underline{\Aut}_{\otimes, \mathrm{br}}}(\mathcal{Z}(\cB))\cong\smash{\underline{\Pic}(\cB)}$ to $\smash{\uuline{\Pic}}(\cB)$ corresponds exactly to the $H^4(G,\C^\times)$-obstruction $\mathrm{O}_4$; see Proposition~2.23 in \cite{DN21}. The equivalence classes of these liftings form a torsor over $H^3(G,\C^\times)$; see Theorem~8.9 in \cite{ENO10}.
\end{proof}

\begin{rem}\label{rem: torsor H3}~
\begin{enumerate}[wide]
\item If $\cC$ is a braided $G$-crossed tensor category, then for $\omega\in Z^3(G, \C^\times)$, we can define a \emph{twisted} $\cC^\omega$ by modifying the associativity constraint with $\omega$ and \emph{twisting} the $G$-action using the maps in equation \eqref{eq: maps for twisting g-crossed}. See \cite{EG18b}, Proposition~2.2, for details.
\item The braided $G$-crossed extensions and their $G$-equivariantisations of the trivial modular tensor category $\cB=\Vect$ correspond to $\smash{\Vect_G^\omega}$ and its Drinfeld centre, respectively; see \autoref{ex_DrinfeldCenter}.
\end{enumerate}
\end{rem}


\section{Example: \texorpdfstring{$\Z_2$}{Z\_2}-Crossed Extensions of Tambara-Yamagami Type}\label{sec_TY}

We give a first example of a nontrivial braided $G$-crossed extension, namely one for which the monoidal structure is of Tambara-Yamagami type.

Throughout this section, let $G=\langle g\rangle\cong\Z_2$. Tambara and Yamagami classified all $\Z_2$-graded fusion categories in which all but one of the simple objects are invertible \cite{TY98}. The braided $G$-crossed structures on these were classified in \cite{Gal22}. We remark that compared with \cite{Gal22} we replace the quadratic form $q$ by $q^{-1}$ in order to match the usual convention for the associated bimultiplicative form $\chi$ (and we replace $\chi$ by $\sigma$ and $\tau$ by $\eps|\Gamma|^{-1/2}$ in order to improve readability).
\begin{defi}
The \emph{Tambara-Yamagami category} $\TY(\Gamma,\sigma,\eps)$ for a finite, abelian group $\Gamma$, a symmetric, nondegenerate bimultiplicative form $\sigma\colon \Gamma\times \Gamma\to \C^\times$ and a sign choice $\eps\in\{\pm1\}$ is a semisimple $\Z_2$-graded tensor category
\begin{equation*}
\TY(\Gamma,\sigma,\eps) = \Vect_\Gamma \oplus \Vect
\end{equation*}
with simple objects $\C_a$, $a\in \Gamma$, and $\X$, fusion rules
\begin{equation*}
\C_a\otimes \C_b=\C_{a+b},\quad
\C_a\otimes \X=\X\otimes \C_a =\X,\quad
\X\otimes \X=\bigoplus_{t\in\Gamma} \C_t
\end{equation*}
for $a,b\in\Gamma$ and the following nontrivial associators
{\allowdisplaybreaks
\begin{align*}
(\C_a\otimes \X)\otimes \C_b
&=X
\overset{\sigma(a,b)}{\xrightarrow{\hspace*{2.97cm}}}
\X=\C_a\otimes (\X\otimes \C_b),\\
(\X\otimes \C_a)\otimes \X
&=\bigoplus_{t\in \Gamma} \C_t
\overset{\sigma(a,t)}{\xrightarrow{\hspace*{1.6cm}}}
\bigoplus_{t\in \Gamma} \C_t
=\X\otimes (\C_a\otimes \X),\\
(\X\otimes \X)\otimes \X
&=\bigoplus_{t\in \Gamma} \X
\overset{\eps|\Gamma|^{-1/2}\sigma(t,r)^{-1}}{\xrightarrow{\hspace*{1.775cm}}}
\bigoplus_{r\in \Gamma}\X
=\X\otimes (\X\otimes \X).
\end{align*}
}%
\end{defi}

There is a rigid structure with (left) dual objects $\C_a^*=\C_{-a}$, $\X^*=\X$ and $\coeval_\X=\iota_{\C_0}$ and $\smash{\eval_\X=\eps|\Gamma|^{1/2}\pi_{\C_0}}$, where $\iota_{\C_t}$ and $\pi_{\C_t}$ are the canonical embeddings and projections, respectively, for the direct sum $\X\otimes\X=\bigoplus_{t\in\Gamma}\C_t$, and otherwise the obvious choices. The two zigzag identities relating $\eval_\X$ and $\coeval_\X$ hold because both the associator and inverse associator on $\X\otimes \X\otimes\X$ at $t=0$ are $\eps|\Gamma|^{-1/2}$.

\begin{thm}[\cite{Gal22}, Lemma~4.8, Theorem~4.9, Proposition~4.12]\label{thm_TYribbon}
Consider the Tambara-Yamagami category $\TY(\Gamma,\sigma,\eps)$.
\begin{enumerate}
\item There are two actions of $G = \langle g \rangle \cong \Z_2$ on $\TY(\Gamma,\sigma,\eps)$ such that the action on $\smash{\Vect_\Gamma}$ restricts to the action $g_*\C_a = \C_{-a}$ with trivial tensor structure $\tau^g_{\C_a,\C_b} = \id$ and trivial composition $T_2^{g,g}(\C_a) = \id$, $a,b\in\Gamma$. Namely, the actions $g_*\X = \X$ with trivial tensor structures $\tau^g_{\C_a,\X} = \tau^g_{\X,\C_a} = \tau^g_{\X,\X} = \id$ and with composition structures $T_2^{g,g}(\C_a) = \id$ and $T_2^{g,g}(\X) = \pm \id$.
\smallskip
\item The nonstrict action $T_2^{g,g}(\X) = -\id$ does not admit a $\Z_2$-braiding.
\smallskip
\item For the strict $\Z_2$-action, the $\Z_2$-braidings are in bijection with pairs $(q,\alpha)$ of a (nondegenerate) quadratic form $q\colon \Gamma \to \C^\times$ with associated bimultiplicative form $\sigma(a,b) = q(a+b) q(a)^{-1} q(b)^{-1}$ and a choice $\alpha$ of square root of
\begin{equation*}
\alpha^2 = \eps |\Gamma|^{-1/2} \sum_{a \in \Gamma} q(a)^{-1}.
\end{equation*}
Then the $\Z_2$-braiding is given by
{\allowdisplaybreaks
\begin{align*}
\C_a \otimes \C_b
&= \C_{a+b}
\overset{\sigma(a,b)}{\xrightarrow{\hspace*{1.27cm}}}
\C_{a+b} = \C_a \otimes \C_b,\\
\C_a \otimes \X
&= \X
\overset{q(a)^{-1}}{\xrightarrow{\hspace*{2.32cm}}}
\X = \X \otimes \C_a,\\
\X \otimes \C_a
&= \X
\overset{q(a)^{-1}}{\xrightarrow{\hspace*{2.32cm}}}
\X = \C_{-a} \otimes \X,\\
\X \otimes \X
&= \bigoplus_{t \in \Gamma} \C_t
\overset{\alpha \, q(t)}{\xrightarrow{\hspace*{0.86cm}}}
\bigoplus_{t \in \Gamma} \C_t = \X \otimes \X
\end{align*}
}%
for $a,b \in \Gamma$.
\smallskip
\item The $\Z_2$-ribbon structures for a given $(q,\alpha)$ are in correspondence with a choice of $\beta = \pm \alpha^{-1}$. Then the ribbon twist is given by
\begin{align*}
\C_a
&\overset{q(a)^{2}}{\xrightarrow{\hspace*{1cm}}}
\C_a,\\
\X
&\overset{\beta}{\xrightarrow{\hspace*{1cm}}}
\X
\end{align*}
for $a\in\Gamma$.
\end{enumerate}
We denote by $\TY(\Gamma,\sigma,\eps\,|\,q,\alpha,\beta)$ the $\Z_2$-crossed ribbon Tambara-Yamagami category associated with these data.
\end{thm}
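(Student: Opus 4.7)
The overall strategy is to exploit Schur's lemma systematically. Since $\End(\C_a) = \End(\X) = \C$ and every simple summand of the Tambara-Yamagami category is known explicitly, the tensor structure of $g_*$, the composition isomorphism $T_2^{g,g}$, the $\Z_2$-braiding $c$ and the $\Z_2$-twist $\theta$ are each encoded by a finite collection of nonzero scalars indexed by the simple objects involved. Each coherence axiom then becomes a system of polynomial equations in these scalars, and the theorem amounts to solving that system up to gauge equivalence. I would tackle each of the four parts by writing down this system and solving it.

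For (1) and (2), I would start from the observation that since $g_*\C_a = \C_{-a}$ is prescribed and $\X$ is the unique simple object of degree $g$, one must have $g_*\X \cong \X$. After a gauge transformation on $\Vect_\Gamma$, I may assume $\tau^g_{\C_a,\C_b} = \id$. Plugging the ansatz $\tau^g_{\C_a,\X} = \lambda(a)$, $\tau^g_{\X,\C_a} = \mu(a)$, $\tau^g_{\X,\X}|_{\C_t} = \nu(t)$ into the hexagons expressing compatibility of $\tau^g$ with the Tambara-Yamagami associators (using $\sigma(-a,-b) = \sigma(a,b)$ and the $g$-invariance of the associator coefficient $\eps|\Gamma|^{-1/2}\sigma(t,r)^{-1}$) should force $\lambda,\mu,\nu$ to be constants, which a further gauge on $\X$ normalises to $1$. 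The associativity axiom for $T_2^{g,g}$ then forces $T_2^{g,g}(\X)^2 = 1$, giving the two candidate actions. For (2), I would show that when $T_2^{g,g}(\X) = -\id$, the $G$-braiding hexagons involving $c_{\X,\X\otimes\X}$ and $c_{\X\otimes\X,\X}$, which each contain a factor of $T_2^{g,g}(\X)$, collapse to a sign contradiction, ruling out a $\Z_2$-braiding for this action.

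For (3) and (4), working with the strict action, I would parametrise a candidate $G$-braiding by scalars $c_{\C_a,\C_b} = \widetilde{\sigma}(a,b)$, $c_{\C_a,\X} = r(a)$, $c_{\X,\C_a} = s(a)$, and $c_{\X,\X}|_{\C_t} = u(t)$. The hexagons involving only invertible objects force $\widetilde{\sigma}$ to be bimultiplicative. The hexagons linking $\C_a,\C_b$ with $\X$, together with the Tambara-Yamagami associator coefficients $\sigma(\cdot,\cdot)$, force $r$ and $s$ to coincide (after accounting for $g_*\C_a = \C_{-a}$) and to satisfy the functional equations that identify $q(a) \coloneqq r(a)^{-1}$ as a quadratic refinement of $\sigma$, i.e.\ $B_q = \sigma$, with $\widetilde{\sigma}$ forced to equal the canonical abelian $3$-cocycle of $q$. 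Finally, the hexagon for $c_{\X,\X\otimes\X}$ evaluated on the $\C_0$-component extracts the associator coefficient $\eps|\Gamma|^{-1/2}$ together with the Gauss sum $\sum_t q(t)^{-1}$, yielding the square-root condition $\alpha^2 = \eps|\Gamma|^{-1/2}\sum_a q(a)^{-1}$ with $\alpha = u(0)$, and then $u(t) = \alpha\, q(t)$ for all $t$. For (4), I would apply the $G$-ribbon equation $\theta_{X\otimes Y} = (\theta_X \otimes \theta_Y)\circ c_{g_*Y,X}\circ c_{X,Y}$: for $X=Y=\C_a$ it determines $\theta_{\C_a} = q(a)^2$ (up to a sign character, pinned down by $\theta_{\C_0} = 1$ and compatibility with duality), and for $X=Y=\X$ it yields $\beta^2\alpha = \alpha^2$, whence $\beta = \pm\alpha^{-1}$.

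The hard part will be the bookkeeping at the hexagons for $\X\otimes\X\otimes\X$ and $c_{\X,\X\otimes\X}$: these involve sums over $\Gamma$ (since $\X\otimes\X = \bigoplus_t\C_t$) and require carefully tracking the Tambara-Yamagami associator on every summand, then correctly reading off the resulting Gauss sum. The crucial nontriviality is that the square-root equation $\alpha^2 = \eps|\Gamma|^{-1/2}\sum_a q(a)^{-1}$ always admits a solution, since the right-hand side is (up to the sign $\eps$) a normalised Gauss sum and hence a unit complex number, and that the simultaneous compatibility of all hexagons pins $q$ down to a quadratic refinement of $\sigma$ rather than merely a bimultiplicative square root. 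I would close by checking that the pivotal and self-duality compatibilities for $\theta_\X$ on $\X^* = \X$ are automatic for either sign of $\beta$, giving the final parameter count.
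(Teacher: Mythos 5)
The paper does not prove this theorem at all: it is imported verbatim from \cite{Gal22} (Lemma~4.8, Theorem~4.9, Proposition~4.12), so there is no in-paper argument to compare against. Your strategy --- reducing every piece of structure to a finite list of nonzero scalars via Schur's lemma and solving the coherence equations --- is exactly the kind of direct verification carried out in the cited source, and the outline is sound: the gauge normalisations, the derivation of $q$ as a quadratic refinement of $\sigma$ from the mixed hexagons, and the appearance of the Gauss sum in the $\X\otimes\X\otimes\X$ hexagon are all the right mechanisms, and your observation that the hexagons force a genuine quadratic refinement (of which there are $|\Gamma/2\Gamma|$, rather than a single bimultiplicative square root) is the correct subtle point.

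Two small corrections. First, in parts (1)--(2) the constraint $T_2^{g,g}(\X)^2=\id$ does not come from the associativity axiom for $T_2$ (which for $g=h=l$ with $g^2=1$ is vacuous on scalars); it comes from the monoidal compatibility of $T_2^{g,g}$ with the tensor structures, applied to $\X\otimes\X=\bigoplus_t\C_t$ where $T_2^{g,g}(\C_t)=\id$ and $\tau^1=\id$ force $T_2^{g,g}(\X)\otimes T_2^{g,g}(\X)=\id$. Second, in part (4) your intermediate equation $\beta^2\alpha=\alpha^2$ is off: the balancing condition on the $\C_t$-component of $\X\otimes\X$ reads $q(t)^2=\theta_{\C_t}=\bigl(\alpha\,q(t)\bigr)^2\beta^2$, i.e.\ $\alpha^2\beta^2=1$, which is what actually yields $\beta=\pm\alpha^{-1}$; as written your equation would give $\beta=\pm\alpha^{1/2}$. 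Neither slip affects the stated conclusions.
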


\begin{rem}\label{rem_TYext}
$\TY(\Gamma,\sigma,\eps\,|\,q,\alpha,\beta)$ is a braided $\Z_2$-crossed extension of the braided tensor category $\smash{\Vect_\Gamma^{q^2}}\cong\smash{\Vect_{\Gamma}^{\sigma,1}}$ (with $\sigma$ symmetric) discussed in \autoref{ex_braidedVect}, with ribbon structure given by $\theta_{\C_a}=q(a)^2$. (But note that the quadratic form $q^2$ on $\Gamma$ may be degenerate if $|\Gamma|$ is even.)
\end{rem}

\begin{rem}\label{rem_alphaEquivalence}
The two choices of $\alpha$ yield equivalent $G$-crossed ribbon categories. Indeed, take $F=\id$ as an autoequivalence of the Tambara-Yamagami category as a linear category, endow $F$ with the structure of a tensor functor with the trivial tensor structure $F(X\otimes Y)\to F(X)\otimes F(Y)$ and then with the structure of a functor between categories with $G$-action with a \emph{nontrivial} constraint $F\circ g_*\to g_*\circ F$, namely
\begin{equation*}
\id_{\C_a}\colon \id(g_*(\C_a)) \longrightarrow g_*(\id(\C_a))
\quad\text{and}\quad
-\id_\X\colon \id(g_*(\X)) \longrightarrow g_*(\id(\X)).
\end{equation*}
It can be verified that this defines a braided $G$-crossed functor between the Tambara-Yamagami categories with the two different choices of $\alpha$. That is, these choices lead to equivalent braided $G$-crossed tensor categories and consequently also to equivalent $G$-equivariantisations, where the equivalence interchanges equivariant objects for eigenvalues $\pm1$.

Take as the simplest example $\Gamma = \{0\}$ and $\eps = 1$, i.e.\ $\smash{\Vect_{\Z_2}}$. Then the two choices $\alpha = \pm 1$ give $\Z_2$-equivariantisations with simple objects $1^\pm, \X^\pm$ with braiding $c_{\X^{\pm},\X^{\pm'}} = \pm$ or $\mp$, respectively. However, these are equivalent braided tensor categories (namely the Drinfeld centre of $\smash{\Vect_{\Z_2}}$) by sending $\X^\pm\mapsto\X^\mp$.
\end{rem}

\begin{rem}\label{rem_TY}
A symmetric, nondegenerate bimultiplicative form $\sigma\colon \Gamma\times \Gamma\to\C^\times$ admits $|\Gamma/2\Gamma|$ many choices of quadratic form $q\colon \Gamma\to\C^\times$ with associated bimultiplicative form $\sigma$.
\end{rem}

\begin{rem}\label{rem_gauss}
The Gauss sum in \autoref{thm_TYribbon} takes the simple form
\begin{equation*}
G(\Gamma,q^{-1})\coloneqq|\Gamma|^{-1/2}\sum_{a\in \Gamma} q(a)^{-1}=\e(\sign(\Gamma,q^{-1})/8)=\e(-\sign(\Gamma,q)/8)
\end{equation*}
where $\sign$ denotes the signature (a number in $\Z_8$) of a discriminant form. Hence, $\alpha^2=\eps \,\e(-\sign(\Gamma,q)/8)$. This shows that $\alpha$ and $\beta$ are $16$-th roots of unity so that $|\alpha|=|\beta|=1$. Moreover, $\alpha\beta\in\{\pm1\}$ by definition.
\end{rem}

\begin{prop}\label{cor_TYpseudounitary}
The quantum dimensions of the above $\Z_2$-crossed ribbon fusion category $\TY(\Gamma,\sigma,\eps\,|\,q,\alpha,\beta)$ coincide with the Frobenius-Perron dimensions if and only if $\alpha \beta = \eps$.
\end{prop}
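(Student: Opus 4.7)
The plan is to compute both dimensions directly from the data and compare. For each invertible simple $\C_a$, $a \in \Gamma$, the fusion rules give $\operatorname{FPdim}(\C_a) = 1$, and because $\C_a$ is $\otimes$-invertible with dual $\C_{-a}$ and the obvious evaluation/coevaluation, we have $\qdim(\C_a) = 1$ as well; so the only content of the proposition is the comparison for the nontrivial simple $\X \in \cC_g$. From $\X \otimes \X = \bigoplus_{t \in \Gamma} \C_t$ we immediately read off $\operatorname{FPdim}(\X) = |\Gamma|^{1/2}$.

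For the quantum dimension I will use the standard ribbon-trace formula
\begin{equation*}
\qdim(\X) \;=\; \eval_\X \circ (\theta_\X \otimes \id_\X) \circ c_{\X,\X} \circ \coeval_\X,
\end{equation*}
which is well defined since $\X^* = \X$ and $\X \in \cC_g$ with $g^2 = 1$, so the $G$-crossed braiding $c_{\X,\X}$ is an endomorphism of $\X \otimes \X$. Substituting the explicit data from the Tambara-Yamagami category defined before \autoref{thm_TYribbon} and from \autoref{thm_TYribbon}(3)--(4): the coevaluation $\coeval_\X = \iota_{\C_0}$ lands in the $\C_0$-summand of $\X \otimes \X = \bigoplus_{t \in \Gamma} \C_t$; the $\Z_2$-braiding $c_{\X,\X}$ acts on that summand by $\alpha \, q(0) = \alpha$; the twist tensored with the identity contributes the scalar $\beta = \theta_\X$; and the evaluation $\eval_\X = \eps |\Gamma|^{1/2} \pi_{\C_0}$ contributes $\eps |\Gamma|^{1/2}$ on the $\C_0$-summand. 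Multiplying these scalars yields
\begin{equation*}
\qdim(\X) \;=\; \eps \, \alpha \beta \, |\Gamma|^{1/2}.
\end{equation*}

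Comparing with $\operatorname{FPdim}(\X) = |\Gamma|^{1/2}$, we find $\qdim(\X) = \operatorname{FPdim}(\X)$ if and only if $\eps \alpha \beta = 1$, and since both $\eps$ and $\alpha \beta$ lie in $\{\pm 1\}$ (for $\alpha \beta$, see \autoref{rem_gauss}), this is equivalent to $\alpha \beta = \eps$, which is the claim.

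The calculation is essentially routine; the only point requiring care is the convention for the ribbon trace, namely whether the braiding $c_{\X,\X}$ or its inverse appears next to $\theta_\X$. The two conventions differ by replacing $\alpha$ with $\alpha^{-1}$ in the final expression, but since $\alpha \beta = \pm 1$ forces $\alpha^{-1} = \alpha^{\pm 1} \beta^{-2}$ to equal $\alpha$ only up to a sign that is absorbed into the equivalence noted in \autoref{rem_alphaEquivalence}, the pseudo-unitarity criterion is insensitive to this choice and comes out as $\alpha \beta = \eps$ in either case.
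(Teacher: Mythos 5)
Your proof is correct and follows essentially the same route as the paper: both arguments reduce the statement to the single noninvertible object and compute $\qdim(\X)=\eps\,\alpha\beta\,|\Gamma|^{1/2}$ by tracing the ribbon trace through the $\C_0$-summand of $\X\otimes\X$, using $\coeval_\X=\iota_{\C_0}$, the braiding scalar $\alpha\,q(0)$, the twist $\beta$ and $\eval_\X=\eps|\Gamma|^{1/2}\pi_{\C_0}$. One caveat: your closing remark that the criterion is insensitive to replacing $c_{\X,\X}$ by its inverse is not correct --- that substitution would give $\eps\,\alpha^{-1}\beta\,|\Gamma|^{1/2}=\pm\eps\,\alpha^{-2}|\Gamma|^{1/2}$, and since $\alpha^{-2}=\eps^{-1}\e(\sign(\Gamma,q)/8)$ is in general not real, the resulting condition would differ (the algebra $\alpha^{-1}=\alpha^{\pm1}\beta^{-2}$ does not force $\alpha^{-1}=\alpha$ up to sign); this aside is harmless here only because your main computation commits to the correct convention, which matches the paper's.
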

\begin{proof}
Recall that the neutral component of $\TY(\Gamma,\sigma,\eps\,|\,q,\alpha,\beta)$ corresponds to $\smash{\Vect_\Gamma}$ with braiding given by $\sigma$. The canonical ribbon is given for $a\in\Gamma$ by
\begin{equation*}
\sigma(a,a) = q(a^2) / q(a)^2 = q^4(a) / q(a)^2 = q^2(a)=\theta_{\C_a}.
\end{equation*}
Therefore, the quantum dimension of the neutral component of $\TY(\Gamma,\sigma,\eps\,|\,q,\alpha,\beta)$ coincides with the Frobenius-Perron dimension.

Now, the quantum dimension of $\X$ is
\begin{align*}
\dim(\X)
&= \eval_\X \circ c_{\X,\X^*} \circ (\theta_\X \otimes \id_{\X^*}) \circ \coeval_\X \\
&= \eps |\Gamma|^{1/2} \cdot (\alpha \, q(0)) \beta \id_\X \\
&= \eps (\alpha \beta) |\Gamma|^{1/2} \id_\X.
\end{align*}
Since the Frobenius-Perron dimension of $\X$ is $|\Gamma|^{1/2}$, to have equality between the quantum and Frobenius-Perron dimensions is equivalent to $\alpha \beta = \eps$.
\end{proof}
By the definition of $\alpha$ and $\beta$, the condition $\alpha\beta=\eps$ is equivalent to $\beta/\alpha=\e(\sign(\Gamma,q)/8)$.

\medskip

The special case of $\Gamma$ of odd order will be of particular interest to this text. Indeed, note that in the \voa{} context it is natural to demand that the equivariantisation of a braided $G$-crossed tensor category be modular (see \autoref{sec_VOAIntro}).
\begin{prop}[\cite{GNN09}, Proposition~5.1]\label{prop:TY_modular}
The equivariantisation of the braided $\Z_2$-crossed tensor category $\TY(\Gamma,\sigma,\eps\,|\,q,\alpha,\beta)\sslash\Z_2$ has a nondegenerate braiding and hence is a modular tensor category if and only if $|\Gamma|$ is odd.
\end{prop}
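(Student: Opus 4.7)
The plan is to reduce the modularity of $\cC\sslash\Z_2$ for $\cC=\TY(\Gamma,\sigma,\eps\,|\,q,\alpha,\beta)$ to nondegeneracy of the neutral component $\cC_1$, and then decide the latter by an explicit calculation with the underlying quadratic form.

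The key general input I will invoke is the following principle: for any faithfully graded braided $G$-crossed fusion category $\cC=\bigoplus_{g\in G}\cC_g$, the equivariantisation $\cC\sslash G$ is a modular tensor category if and only if the neutral component $\cC_1$ is itself modular. This is a standard consequence of the results in \cite{DGNO10}; alternatively, it follows from Müger's centraliser formulas applied to the Tannakian subcategory $\Rep(G)\subset\cC\sslash G$, using the identification $(\Rep(G))'=\cC_1\sslash G$ inside $\cC\sslash G$ and the double-centraliser theorem. The $\Z_2$-grading on $\cC$ is faithful by the very definition of the Tambara-Yamagami category (both components are nonzero), so the question reduces to deciding when $\cC_1$ is nondegenerate.

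By \autoref{rem_TYext}, the neutral component is identified with the pointed braided fusion category $\smash{\Vect_\Gamma^{q^2}}$, and its associated bimultiplicative form is
\begin{equation*}
B_{q^2}(a,b)=q(a+b)^2\,q(a)^{-2}\,q(b)^{-2}=\sigma(a,b)^2.
\end{equation*}
By \autoref{ex_braidedVect}, $\cC_1$ is modular precisely when $B_{q^2}=\sigma^2$ is nondegenerate. Bimultiplicativity of $\sigma$ yields $\sigma^2(a,b)=\sigma(2a,b)$, so the map $\Gamma\to\widehat{\Gamma}$, $a\mapsto\sigma^2(a,\cdot)$, factors as the doubling map $\Gamma\to\Gamma$, $a\mapsto 2a$, followed by the isomorphism $\Gamma\cong\widehat{\Gamma}$ induced by the nondegenerate $\sigma$. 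Consequently $\sigma^2$ is nondegenerate if and only if doubling is a bijection on the finite abelian group $\Gamma$, which in turn is equivalent to $|\Gamma|$ being odd.

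The only nontrivial step is the general reduction principle invoked in the second paragraph; everything else is a brief unwinding of definitions. The main obstacle, if one insists on a self-contained argument, will be justifying that principle carefully: namely, checking that $(\Rep(G))'=\cC_1\sslash G$ in $\cC\sslash G$ and that the Müger centre of $\cC\sslash G$ coincides with $\Rep(G)$ precisely when $\cC_1$ is nondegenerate, so that $\cC\sslash G$ has trivial Müger centre in exactly that case.
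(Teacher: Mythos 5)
Your argument is correct, but note that the paper does not prove this proposition itself --- it simply cites \cite{GNN09}, Proposition~5.1 --- so there is no in-paper proof to compare against. Your reduction step is in fact already recorded in the paper as the unnumbered Remark following the proposition quoted from \cite{DGNO10} in \autoref{sec_Gcrossed}: the equivariantisation of a (faithfully graded) braided $G$-crossed tensor category is nondegenerate if and only if its neutral component is. Granting that, the rest of your computation is right and matches the data of \autoref{thm_TYribbon} and \autoref{rem_TYext}: the braiding on $\cC_1$ is $\sigma(a,b)$, so the double-braiding form is $\sigma(a,b)\sigma(b,a)=\sigma(a,b)^2=\sigma(2a,b)$ by symmetry and bimultiplicativity, and since $\sigma$ is nondegenerate by the definition of the Tambara-Yamagami data, this form is nondegenerate exactly when $a\mapsto 2a$ is bijective on $\Gamma$, i.e.\ when $|\Gamma|$ is odd. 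The grading is visibly faithful, and the ribbon structure is already supplied by \autoref{thm_TYribbon}, so nondegeneracy does give modularity.

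One sentence of your fallback justification is wrong as written: you say the M\"uger centre of $\cC\sslash G$ ``coincides with $\Rep(G)$ precisely when $\cC_1$ is nondegenerate, so that $\cC\sslash G$ has trivial M\"uger centre in exactly that case''. These two clauses contradict each other --- if the M\"uger centre were $\Rep(\Z_2)$, the equivariantisation would \emph{fail} to be modular. The correct statement is that the M\"uger centre of $\cC\sslash G$ is trivial if and only if $\cC_1$ is nondegenerate (and the grading faithful), which is the content of \cite{DGNO10}; the identity $(\Rep(G))'=\cC_1\sslash G$ only yields a containment of the M\"uger centre in $\cC_1\sslash G$ and does not by itself close the argument. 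Since your main line invokes \cite{DGNO10} (equivalently, the paper's own Remark) for the reduction, this slip does not invalidate the proof, but the sketch should be corrected or dropped.
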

Hence, in the following, we assume that $\Gamma$ has odd order. Then, as mentioned in \autoref{ex_braidedVect}, given any nondegenerate quadratic form $Q$ on $\Gamma$, the equivalence class $\smash{\Vect_\Gamma^Q}$ has a distinguished representative $\smash{\Vect_\Gamma^{\sigma,1}}$, i.e.\ there exists a distinguished representing abelian $3$-cocycle $(\sigma,\omega)$ with trivial associator $\omega=1$ and braiding given by the symmetric bimultiplicative form $\smash{\sigma=B_Q^{1/2}}$.

That is, by \autoref{rem_TYext}, the Tambara-Yamagami categories $\TY(\Gamma,\sigma,\eps\,|\,q,\alpha,\beta)$ are braided $\Z_2$-crossed extensions of $\smash{\Vect_\Gamma^Q}$ (concretely represented by $\smash{\Vect_\Gamma^{\sigma,1}}$). We note that, since $\Gamma$ is odd, there is a unique choice of quadratic form $q$ with associated bimultiplicative form $B_q=\sigma$ (see \autoref{rem_TY}), and this quadratic form must coincide with the unique square root of $Q$, i.e.\ $q=Q^{1/2}$.

Since $|\Gamma|$ is odd, $\smash{\Vect_\Gamma^Q}$ has a unique ribbon structure (cf.\ \autoref*{GLM2sec_GRibbon} in \cite{GLM24b}). The ribbon twist is given by $\theta_{\C_a}=Q(a)=q(a)^2$ for $a\in\Gamma$, and with this ribbon structure the quantum dimensions agree with the Frobenius-Perron dimensions (in particular, $\smash{\Vect_\Gamma^Q}$ is pseudo-unitary). Hence, the $\Z_2$-crossed ribbon structure on $\TY(\Gamma,\sigma,\eps\,|\,q,\alpha,\beta)$ is an extension of the ribbon structure on $\smash{\Vect_\Gamma^Q}$, and yields coinciding quantum and Frobenius-Perron dimensions if and only if $\beta=\eps/\alpha$ by \autoref{cor_TYpseudounitary}.

We summarise this in the following statement:
\begin{cor}\label{cor_TY}
For $|\Gamma|$ odd, the modular tensor category $\smash{\Vect_\Gamma^Q}$ with strict $\Z_2$-action $g_*\C_a=\C_{-a}$ has two braided $\Z_2$-crossed extensions with positive quantum dimensions, namely, for $\eps=\pm 1$,
\begin{equation*}
\Vect_\Gamma^Q[\Z_2,\eps] \coloneqq \TY(\Gamma,B_Q^{1/2},\eps \mid Q^{1/2}, \alpha,\eps/\alpha).
\end{equation*}
Here, $\alpha$ is one of the solutions of $\alpha^2=\eps\,G(\Gamma,q^{-1})=\eps\,\e(-\sign(\Gamma,q)/8)$, both giving equivalent extensions. The equivariantisations $\Vect_\Gamma^Q[\Z_2,\eps]\sslash\Z_2$ are again modular.
\end{cor}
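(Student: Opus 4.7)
The strategy is to combine the preceding results, with the classification theorem doing most of the heavy lifting. Since $|\Gamma|$ is odd, the quadratic form $Q$ has a unique square root $q\coloneqq Q^{1/2}$ whose associated bimultiplicative form is $\sigma=B_Q^{1/2}$, and the equivalence class $\Vect_\Gamma^Q$ is represented by the abelian $3$-cocycle $(\sigma,1)$ of \autoref{ex_braidedVect}. By \autoref{rem_TYext}, the identity component of any $\TY(\Gamma,B_Q^{1/2},\eps\,|\,q,\alpha,\beta)$ is thus $\Vect_\Gamma^Q$ with exactly the prescribed strict $\Z_2$-action.

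For existence, I would apply \autoref{thm_TYribbon} for each $\eps\in\{\pm 1\}$: pick any square root $\alpha$ of $\eps\cdot G(\Gamma,q^{-1})$ and set $\beta\coloneqq\eps/\alpha$, which is a valid choice of $\pm\alpha^{-1}$ in part~(4) of that theorem. The pseudo-unitarity condition $\alpha\beta=\eps$ of \autoref{cor_TYpseudounitary} then holds by construction, giving positive quantum dimensions. The two square roots of $\alpha^2$ yield equivalent braided $\Z_2$-crossed ribbon categories by \autoref{rem_alphaEquivalence}, so the construction depends only on $\eps$. Modularity of the equivariantisation follows immediately from \autoref{prop:TY_modular}.

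For exhaustion, I would invoke \autoref{thm_ourENO_braided}: having already produced an extension, the obstruction $\mathrm{O}_4\in H^4(\Z_2,\C^\times)$ vanishes, so the equivalence classes of braided $\Z_2$-crossed extensions of $\Vect_\Gamma^Q$ with this action form a torsor over $H^3(\Z_2,\C^\times)\cong\Z_2$ and thus number at most two. The two categories $\Vect_\Gamma^Q[\Z_2,+1]$ and $\Vect_\Gamma^Q[\Z_2,-1]$ are inequivalent already as fusion categories by the Tambara-Yamagami classification \cite{TY98}, the associator on $\X\otimes\X\otimes\X$ encoding the sign $\eps$, so they realise both torsor classes. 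Since pseudo-unitarity pins down the $\Z_2$-ribbon structure on each class to at most one choice (see the discussion of pseudo-unitarity in \cite{GLM24b}), the list is complete.

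The step I expect to require most care is the inequivalence of the two constructions as braided $\Z_2$-crossed ribbon categories, in particular ruling out a hidden equivalence that could interchange $\eps=+1$ and $\eps=-1$. The cleanest route is to invoke the Tambara-Yamagami invariant $\eps$ directly at the level of the underlying fusion category; alternatively, one can recover $\eps$ intrinsically from the ribbon data via the identities $\alpha\beta=\eps$ and $\alpha^2=\eps\cdot G(\Gamma,q^{-1})$, which together show that the assignment $\eps\mapsto\Vect_\Gamma^Q[\Z_2,\eps]$ is injective on equivalence classes.
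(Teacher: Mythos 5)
Your proposal is correct and follows essentially the same route as the paper: identify the identity component via the distinguished representative $(B_Q^{1/2},1)$ and the unique square root $q=Q^{1/2}$, build the two extensions from \autoref{thm_TYribbon} with $\beta=\eps/\alpha$ forced by \autoref{cor_TYpseudounitary}, use \autoref{rem_alphaEquivalence} for the independence of the choice of $\alpha$ and \autoref{prop:TY_modular} for modularity, and invoke \autoref{thm_ourENO_braided} for exhaustion. Your explicit verification that $\eps=+1$ and $\eps=-1$ realise the two distinct torsor classes (via the Tambara--Yamagami fusion-category invariant) is a small point the paper leaves implicit, but otherwise the arguments coincide.
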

By \autoref{thm_ourENO_braided} these are the unique two braided $\Z_2$-crossed extensions of $\smash{\Vect_\Gamma^Q}$. The equivariantisation $\smash{\Vect_\Gamma^Q}[\Z_2,\eps]\sslash\Z_2$ was computed in \cite{GNN09} and will appear as a special case of the more general result in \autoref*{GLM2sec_equiv} of \cite{GLM24b}.

\begin{rem}\label{rem_Gauss}
If $|\Gamma|$ is odd, the Gauss sum $G(\Gamma,q^{-1})$ over $q^{-1}=Q^{-1/2}$ can be expressed in a simple way in terms of the signature of the discriminant form $\sign(\Gamma,Q)$ (see, e.g., \cite{Sch09}):
\begin{align*}
G(\Gamma,q^{-1})&=|\Gamma|^{-1/2}\sum_{a\in \Gamma}q(a)^{-1}
=\e(\sign(\Gamma,q^{-1})/8)\\
&=\left(\frac{2}{|\Gamma|}\right)G(\Gamma,Q^{-1})=\left(\frac{2}{|\Gamma|}\right)\e(-\sign(\Gamma,Q)/8),
\end{align*}
where $(\frac{\cdot}{\cdot})$ is the Kronecker symbol, and for $n$ odd $(\frac{2}{n})=(\frac{n}{2})=(-1)^{(n^2-1)/8}$.

If $(\Gamma,Q)$ is obtained as the discriminant form $L^*\!/L$ of an even lattice $L$, then the signature of $(\Gamma,Q)$ is the signature of the underlying vector space $L\otimes_\Z\R$ modulo~$8$ (see \autoref{sec_LatticeVOA}). With the above formula, the ribbon twist eigenvalue on $\X$ (up to an irrelevant sign) reads
\begin{equation*}
\theta_\X=\beta=\eps/\alpha=\pm \left(\eps\left(\frac{2}{|\Gamma|}\right)\e(\sign(\Gamma,Q)/8)\right)^{1/2}.
\end{equation*}
\end{rem}

\begin{ex}
Let $\Gamma=\Z_n$ with odd $n$ and an arbitrary nondegenerate quadratic form $Q_k(a)=\e(k\,a^2/n)$ for some $k\in\Z$ with $(k,n)=1$. Then
\begin{equation*}
\e(\sign(\Gamma,Q_k)/8)=\left(\frac{k}{n}\right)\epsilon_n,\quad
\epsilon_n=
\begin{cases}
1 &\text{if }n=1\pmod{4},\\
\i &\text{if }n=3\pmod{4},
\end{cases}
\end{equation*}
where $\epsilon_n\coloneqq\e(\sign(\Gamma,Q_1)/8)$ is the signature of the standard discriminant form $(\Gamma,Q_1)$ and
\begin{equation*}
G(\Gamma,Q_k^{-1/2})^{-1}=\left(\frac{2}{n}\right)\e(\sign(\Gamma,Q_1)/8)
=\begin{cases}
1 &\text{if }n=1\pmod{8},\\
-\i &\text{if }n=3\pmod{8},\\
-1 &\text{if }n=5\pmod{8},\\
\i &\text{if }n=7\pmod{8},
\end{cases}
\end{equation*}
which we notice is precisely $\i^{-(n-1)/2}=\e(-(n-1)/8)$.
\end{ex}


\section{Condensations by Commutative Algebras}

In this section we review the condensation of a braided tensor category $\cB$ by a commutative algebra $A\in\cB$ to a new braided tensor category $\cB_A^{\loc}$ of local $A$-modules, $\cB\leadsto\cB_A^{\loc}$. As an example, we discuss the condensation of a quadratic space or discriminant form by an isotropic subgroup. We also discuss how automorphisms of $A$ lift to automorphisms of the braided tensor category $\cB_A^{\loc}$.

A particular instance of this example will be the categories of modules associated with lattice \voa{}s. This is in principle well-known (see \cite{DL93}), but we compute all structures of the resulting braided tensor category in detail as these computations serve as template for new results treated below. Moreover, we are not aware of a treatment including these details in the literature.


\subsection{Commutative Algebra Condensations}\label{sec_CurrentExtension}

We briefly recall certain notions of algebras in (braided) tensor categories and how the corresponding categories of modules yield a construction, which we call condensation, to obtain new (braided) tensor categories \cite{Par95,KO02,FFRS06,DMNO13} (and \cite{LW23,SY24} in the nonsemisimple setting). They correspond to conformal extensions of \voa{}s \cite{HKL15,CKM17,CMSY24} and appear in the context of anyon condensation \cite{FSV13,Kon14}.

\medskip

In a tensor category $\cB$, there is the notion of an algebra $A$. In a braided tensor category $\cB$, there is the notion of a commutative algebra $A$. There is also the concept of modules over an algebra $A$ in a tensor category $\cB$, which are objects $M$ in $\cB$ with an action morphism $\rho\colon A \otimes M \to M$ in $\cB$. This produces a category~$\cB_A$. If $\cB$ is braided and $A$ commutative, then $\cB_A$ carries the structure of a tensor category with a tensor product $M \otimes_A N$ defined as the coequaliser of the tensor product $M \otimes N$ under the morphism
\begin{equation*}
\begin{tikzcd}
(M \otimes A)\otimes N
\ar{d}{c_{A,M}^{-1}\otimes \id}
\ar{r}{\text{ass.}}
&M \otimes (A\otimes N)
\ar{d}{\rho}
\\
(A \otimes M)\otimes N
\ar{r}{\rho\otimes \id}
&
M\otimes N
\end{tikzcd}
\end{equation*}
A \emph{local $A$-module} $M$ is an $A$-module such that
\begin{equation*}
\rho=\rho \circ c_{M,A}\circ c_{A,M},
\end{equation*}
and we denote the category of local $A$-modules in $\cB$ by $\cB_A^\loc$. Note that for local modules the choice between $c_{M,A}$ and $c_{A,M}^{-1}$ in the definition of $M\otimes_A N$ disappears. As it turns out, this definition produces again a braided tensor category $\cB_A^\loc$, which we call the \emph{condensation} of $\cB$ by $A$. We also write $\cB\leadsto\cB_A^{\loc}$.

Note that, e.g., Theorem~B in \cite{SY24} gives precise conditions on $A$ under which $\cB_A^{\loc}$ is finite, ribbon or modular. In this article, we only work with simple-current extensions; hence if $\cB$ is ribbon and the twist $\theta_A=\id_A$, then $\cB_A^{\loc}$ is also ribbon (see Lemma~6.1 in \cite{SY24} and \autoref{rem_simplecurrentribbon} below).


\subsection{Condensations of Discriminant Forms}\label{sec_DiscForm}

In the following, as a main example, we study the condensation of $\cB = \smash{\Vect_\DiscGamma^\DiscQ}$ by an algebra corresponding to an isotropic subgroup $I \subset \DiscGamma$ of the discriminant form $(\DiscGamma, \DiscQ)$.

\medskip

Let $(\DiscGamma,\DiscQ)$ be an abelian group together with a nondegenerate quadratic form $\DiscQ\colon\DiscGamma\to\C^\times$ and let $\DiscB(v,w)=\DiscQ(v+w)\DiscQ(v)^{-1}\DiscQ(w)^{-1}$ be the associated nondegenerate bimultiplicative form. As in \autoref{ex_braidedVect}, let $\cB=\smash{\Vect_\DiscGamma^\DiscQ}$ be the braided tensor category associated with this data, and choose an explicit abelian $3$-cocycle $(\DiscS,\DiscO)$ on $\DiscGamma$ associated with $\DiscQ$ in the sense that $\DiscS(v,v)=\DiscQ(v)$ and $\DiscS(v,w)\DiscS(w,v)=\DiscB(v,w)$ holds for $v,w\in \DiscGamma$. Then $\DiscS(v,w)$ describes the braiding and $\DiscO(v,w,z)$ the associator in $\cB$. In principle, $\smash{\DiscGamma}$ can be an infinite group (see \cite{CMY22,AR18} and \autoref{sec_latticeDiscriminantForm}), but we assume for now that $\DiscGamma$ is finite, in which case $(\DiscGamma,\DiscQ)$ is a discriminant form (see \autoref{sec:prel}).

\medskip

Let $I$ be a subgroup of $\DiscGamma$. For any $2$-cochain $\epsilon\colon I \times I \to \C^\times$ with $\d\epsilon = \DiscO$, we define an algebra $A$ in $\cB$ by
\begin{equation*}
A \coloneqq \C_\epsilon[I] = \bigoplus_{i \in I} \C_i,
\end{equation*}
with basis elements $e_i \in \C_i$ for $i \in I$ and multiplication given by
\begin{equation*}
e_i \cdot e_j \coloneqq \epsilon(i,j)\,e_{i+j}
\end{equation*}
for $i, j \in I$. The algebra $A$ is associative if $\d\epsilon = \DiscO$, as assumed. The isomorphism class of $A$ depends on $\epsilon$ only up to a coboundary $\d\gamma$ for some $\gamma\colon I \to \C^\times$. The map $\beta(i, j) \coloneqq \DiscS(i, j)\epsilon(i, j)\epsilon(j, i)^{-1}$, $i,j\in I$, is bimultiplicative and satisfies $\DiscQ(i) = \beta(i, i)$. This map $\beta$ depends only on the isomorphism class of $A$. The algebra $A$ is commutative if and only if $\beta(i, j) = 1$ for all $i, j\in I$, or equivalently, if $\DiscQ(i) = 1$ for all $i\in I$, meaning the subgroup $I$ is isotropic. Note that if $\DiscGamma$ is finite and of odd order, we may choose $\DiscO = 1$ and $\epsilon = 1$.

In the remainder of this section, we aim to verify the following well-known statement. For simplicity, we restrict our exposition to the case of a strict base category~$\cB$, i.e.\ $\DiscO = 1$, and consequently $\epsilon$ a $2$-cocycle because of $\d\epsilon = \DiscO$. This restriction suffices for the examples discussed later, though the main assertions do not depend on this simplification.

Recall from \autoref{sec:prel} the definition $I^\perp=\{v \in \DiscGamma\mid\DiscB(v,i) = 1\text{ for all }i \in I\}$ and that $\Gamma\coloneqq I^\perp/I$ is again a discriminant form, with the quadratic form $Q$ given by $\DiscQ$ restricted to $I^\perp$ and factored through $I^\perp/I$.

\begin{prop}\label{prop_quadext}
The tensor category of $A$-modules and the braided tensor category of local $A$-modules are given by
\begin{align*}
\cB_A &\cong \Vect_{\DiscGamma/I},\\
\cB_A^{\loc} &\cong \Vect_{\Gamma}^{Q},
\end{align*}
respectively. If we denote the cosets in $\DiscGamma/I$ and $I^\perp/I$ by $a\in\DiscGamma/I$, choose an explicit set of representatives $\hat{a}\in\DiscGamma$ and define the corresponding $2$-cocycle $u\colon\DiscGamma/I\times \DiscGamma/I\to I$ by $u(a, b) = \hat{a} + \hat{b} - \widehat{a+b}$, then the proof produces for $\DiscO = 1$ the following explicit $3$-cocycle $\omega$ for $\cB_A$ and abelian $3$-cocycle $(\sigma,\omega)$ for $\cB_A^{\loc}$:
\begin{equation*}
\sigma(a, b) = \DiscS(\hat{a}, \hat{b}),
\quad
\omega(a, b, c) = \DiscS(\hat{a}, u(b, c)) \frac{\epsilon(u(b, c), u(a, b+c))}{\epsilon(u(a, b), u(a+b, c))}.
\end{equation*}
\end{prop}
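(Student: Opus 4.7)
The plan is to work on a concrete skeleton of $\cB_A$. For each coset $a \in \DiscGamma/I$, fix the lift $\hat{a}$ and set $M_a \coloneqq A \otimes \C_{\hat{a}} = \bigoplus_{i \in I} \C_{\hat{a}+i}$ with the obvious free $A$-action. Each $M_a$ is a simple $A$-module (its $\DiscGamma$-graded pieces are pairwise nonisomorphic simples of $\cB$ permuted transitively by $A$), and every simple $A$-module arises this way by Frobenius reciprocity, with $M_a \cong M_b$ forcing $a = b$. This identifies $\Irr(\cB_A) \cong \DiscGamma/I$, so $\cB_A \cong \Vect_{\DiscGamma/I}$ as an abelian category. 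For locality, the double braiding on the summand $\C_i \otimes \C_{\hat{a}+j} \subset A \otimes M_a$ acts by $\DiscB(i, \hat{a}+j) = \DiscB(i,\hat{a})$ (using $\DiscB(i,j) = 1$ from the isotropy of $I$), so $M_a$ is local iff $\hat{a} \in I^\perp$; hence $\Irr(\cB_A^\loc) \cong I^\perp/I = \Gamma$.

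\textbf{Tensor product and associator.} I next fix identifying isomorphisms $\phi_{a,b}\colon M_a \otimes_A M_b \xrightarrow{\sim} M_{a+b}$ on the skeleton. In the coequaliser, the class of $e_{\hat{a}} \otimes_A e_{\hat{b}}$ sits in $\DiscGamma$-degree $\hat{a}+\hat{b} = \widehat{a+b} + u(a,b)$, which is identified via the $A$-action with $\epsilon(u(a,b), \widehat{a+b})^{-1}\, e_{u(a,b)}\cdot e_{\widehat{a+b}}$; extending $A$-linearly defines $\phi_{a,b}$. Computing the associator as the comparison between the two sides of the pentagon, the ambient $\cB$-associator contributes trivially (since $\DiscO = 1$), so the defect lives entirely in the reidentifications: the path through $M_{a+b} \otimes_A M_c$ picks up the product $e_{u(a,b)} \cdot e_{u(a+b,c)} = \epsilon(u(a,b), u(a+b,c))\, e_{u(a,b)+u(a+b,c)}$, while the path through $M_a \otimes_A M_{b+c}$ picks up $\epsilon(u(b,c), u(a,b+c))$ and additionally requires commuting the factor $e_{u(b,c)}$ past $e_{\hat{a}}$ through the braiding of $\cB$, producing an extra $\DiscS(\hat{a}, u(b,c))$. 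Taking the ratio yields exactly the stated $\omega(a,b,c)$, and the pentagon identity for this $\omega$ follows from the corresponding identity for $(\DiscS, 1)$ combined with the $2$-cocycle condition $\d\epsilon = 1$ on $I$.

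\textbf{Braiding and the main obstacle.} For local modules, the braiding descends from $\cB$ on the generating summands, giving $\sigma(a,b) = \DiscS(\hat{a}, \hat{b})$ after transport through $\phi_{a,b}$ and $\phi_{b,a}$. The self-braiding is $\sigma(a,a) = \DiscS(\hat{a}, \hat{a}) = \DiscQ(\hat{a}) = Q(a)$, so $(\sigma, \omega)$ represents the equivalence class of $Q$ on $\Gamma$, and hence $\cB_A^\loc \cong \Vect_\Gamma^Q$ as braided tensor categories. The two hexagons for $(\sigma,\omega)$ follow from the ones for $(\DiscS, 1)$ on $\DiscGamma$, and well-definedness modulo abelian $3$-coboundaries uses $\DiscB(i,\hat{b}) = 1$ for $i \in I$, $\hat{b} \in I^\perp$. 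The main obstacle throughout is the consistent bookkeeping of representatives: every reassociation or braiding step must be routed through $\phi$, and each rerouting produces $\epsilon$- or $\DiscS$-factors whose placement is easy to confuse. My strategy is to reduce each coherence diagram to a comparison of scalars in front of a single fixed generator $e_{\widehat{a+b+\cdots}}$ of the target module, and then to appeal directly to the cocycle conditions on $\DiscO$ and $\epsilon$.
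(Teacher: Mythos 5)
Your proposal is correct and follows essentially the same route as the paper: induced modules $A\otimes\C_{\hat a}$ as a skeleton, locality via the double braiding $\DiscB(i,\hat a)$, explicit identifications $M_a\otimes_A M_b\to M_{a+b}$ routed through the representatives $\hat a$, and the associator/braiding read off as the scalar discrepancy between the two reidentification paths, with the $\DiscS(\hat a,u(b,c))$ factor arising exactly as you say from braiding the $I$-degree correction past the first factor. The only blemish is notational: the prefactor $\epsilon(u(a,b),\widehat{a+b})^{-1}$ does not type-check since $\epsilon$ is only defined on $I\times I$ (with the paper's normalised action this factor is simply $1$), but this does not affect the argument.
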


\begin{rem}\label{rem_simplecurrentribbon}
The condition $\DiscQ(i) = 1$ additionally ensures that the (standard choice of) ribbon twist $\theta_{\C_v} = \DiscQ(v)$ acts trivially on $A$, and hence lifts to a ribbon twist on $\cB_A$.
\end{rem}

\begin{proof}
We construct the structures step by step and verify the assertions of \autoref{prop_quadext}. We obtain $A$-modules in $\cB_A$ as induced modules of $\C_v$ for any $v\in \DiscGamma$. More explicitly,
\begin{equation*}
\Ind_A\C_v = A\otimes \C_v= \bigoplus_{i+v\in v+I} \C_{i+v}
\end{equation*}
as objects in $\cB$, with basis denoted by $e_{i+v}\in \C_{i+v}$ and with $A$-action
\begin{equation*}
\rho\colon e_i\otimes e_{j+v}\longmapsto
\epsilon(i,j)\, e_{(i+j)+v}.
\end{equation*}
Note that a different choice of coset representatives $v+I=v'+I$ leads to a different action on the same objects, but there is a nontrivial $A$-module isomorphism
\begin{align*}
\Ind_A\C_v&\cong \Ind_A\C_{v'},\\
e_{i+v}&\longmapsto \epsilon(i,v-v') e_{(i+(v-v'))}.
\end{align*}
We shall denote the isomorphism class of this simple $A$-module from now on by $\C_{a}$ for cosets $a=v+I\in \DiscGamma/I$.

It is not difficult to see that these modules are simple $A$-modules and that these are all simple modules. Hence, as abelian categories
\begin{equation*}
\cB_A\cong \Vect_{\DiscGamma/I}.
\end{equation*}

The local modules $\Ind_A\C_v$ are those for which for every $i,j\in I$
\begin{equation*}
\epsilon(i,j)=\DiscS(i,j+v)\DiscS(j+v,i)\,\epsilon(i,j),
\end{equation*}
which is equivalent to the condition $\DiscB(i,v)=1$ for all $i\in I$, which of course only depends on the coset $v+I$. Hence, as abelian categories
\begin{equation*}
\cB_A^{\loc}\cong \Vect_{I^\perp/I}.
\end{equation*}

\smallskip

The tensor product $\otimes_A$ of two $A$-modules $(\C_{a},\rho_a)$ and $(\C_{b},\rho_b)$ was defined as the coequaliser of the following two maps
\begin{equation*}
\begin{tikzcd}
\bigoplus_{\substack{i+v\in a+I,\\ j+w\in w+I}} &
(\C_{i+v} \otimes \C_x)\otimes \C_{j+w}
\ar{d}{c_{A,M}^{-1}\otimes \id}
\ar{r}{\text{ass.}}
&\C_{i+v} \otimes (\C_x\otimes \C_{j+w})
\ar{d}{\id\otimes \rho_b}
\\
&
(\C_x \otimes \C_{i+v})\otimes \C_{j+w}
\ar{r}{\rho_a\otimes \id}
&
\C_{i+v} \otimes \C_{j+w}
\end{tikzcd}
\end{equation*}
for $v+I,w+I\in \DiscGamma/I$ and for all $x\in I$. Setting equal the images of the generators $(e_{i+v}\otimes e_x)\otimes e_{j+w}$ under the upper and the lower morphism gives the coequaliser as equivalence relation
\begin{align*}
\epsilon(x,j)
\,(e_{i+v}\otimes e_{(x+j)+w})
\sim \DiscS(i+v,x)\epsilon(x,i)
\,(e_{(i+x)+v}\otimes e_{j+w})
\end{align*}
such that the tensor product over $A$ is
\begin{equation*}
\C_a\otimes_A \C_b =\C_a\otimes \C_b /\sim.
\end{equation*}
The $A$-action on this tensor product is given by the action on the first factor as
\begin{equation*}
e_x\otimes (e_{i+v}\otimes e_{j+w})
\longmapsto \epsilon(x,i)\,(e_{(x+i)+v}\otimes e_{j+w}).
\end{equation*}
Clearly, the set of equivalence classes of this equivalence relation is as object $\bigoplus_{k\in I} \C_{k+v+w}$, and on the set of equivalence class representatives $e_{k+v}\otimes e_w$ the action on the first factor agrees with the action on $\Ind_A\C_{v+w}$. Thus, there is an explicit isomorphism of $A$-modules \begin{align*}
\iota_{v,w}\colon\C_{v+I}\otimes_A \C_{w+I} &\overset{\sim}{\longrightarrow} \C_{(v+w)+I},\\
e_{i+v}\otimes e_{j+w}&\longmapsto \DiscS(i+v,j)\epsilon(j,i)e_{(i+j)+(v+w)}
\end{align*}
for cosets $a=v+I$ and $b=w+I$ in $\DiscGamma/I$. Note, however, that in general this module isomorphism is \emph{not} compatible with the choice of $A$-module isomorphism above changing representatives $\C_{v+I}\cong \C_{v'+I}$ and $\C_{w+I}\cong \C_{w'+I}$, not even for $\epsilon=1$. Similarly, the braiding of $\C_{v+I}\otimes_A \C_{w+I}$ written as a morphism on $\C_{(v+w)+I}$ depends on these choices.

Hence, we need to introduce a preferred system of coset representatives $\hat{a}\in \DiscGamma$ for the $I$-cosets $a\in\DiscGamma/I$. We assume $\hat{0}=0$. Define the corresponding normalised $2$-cocycle $u\colon\DiscGamma/I\times \DiscGamma/I\to I$ as
\begin{equation*}
u(a,b)=\hat{a}+\hat{b}-\widehat{a+b},
\end{equation*}
which corresponds to the extension of abelian groups $I\to \DiscGamma\to \DiscGamma/I$ and the choice of coset representatives. Further, denote the composition of $\iota$ with the corresponding change of representative from $\hat{a}+\hat{b}$ to $\widehat{a+b}$ by
\begin{align*}
\hat{\iota}_{\hat{a},\hat{b}}\colon
\Ind_A\C_{\hat{a}}\otimes\Ind_A\C_{\hat{b}}
&\longrightarrow
\Ind_A\C_{\widehat{a+b}},\\
e_{i+\hat{a}}\otimes e_{j+\hat{b}}
&\longmapsto
\DiscS(i+\hat{a},j)\epsilon(j,i)\epsilon(i+j,u(a,b))\;
e_{(u(a,b)+i+j)+\widehat{a+b}}.
\end{align*}

Then, the associator in the tensor category $\cB_A$ is given by the dashed arrow in
\begin{equation*}
\begin{tikzcd}
\bigl(\C_{i+\hat{a}} \otimes \C_{j+\hat{b}}\bigr)\otimes \C_{k+\hat{c}}
\ar{d}{\hat{\iota}_{\hat{a},\hat{b}} \otimes \id}
\ar{r}{\DiscO=1}
&
\C_{i+\hat{a}} \otimes \bigl(\C_{j+\hat{b}}\otimes \C_{k+\hat{c}} \bigr)
\ar{d}{\id\otimes \hat{\iota}_{\hat{a},\widehat{b+c}} \otimes \id} \\
\C_{(u(a,b)+i+j)+\widehat{a+b}} \otimes \C_{c+k}
\ar{d}{\hat{\iota}_{\widehat{a+b},c} \otimes \id}
&
\C_{i+\hat{a}} \otimes \C_{(u(b,c)+j+k)+\widehat{b+c}}
\ar{d}{\id\otimes \hat{\iota}_{\hat{b},\hat{c}} \otimes \id} \\
\C_{(u(a,b)+u(a+b,c)+i+j+k)+\widehat{a+b+c}}
\arrow[dashed, r]{}{}
&
\C_{(u(a,b+c)+u(b,c)+i+j+k)+\widehat{a+b+c}}
\end{tikzcd}
\end{equation*}
in any degree $i+\hat{a},j+\hat{b},k+\hat{c}\in \DiscGamma$ in cosets $a,b,c\in \DiscGamma/I$, and the result should be a morphism on the full module, so it should turn out to be independent of $i,j,k$. Following the basis element $e_{(i+j+k)+\widehat{a+b+c}}$ through these isomorphisms, and using that $\DiscS(v,w)$ is bimultiplicative because $\DiscO(v,w,z)=1$, we compute:
{\allowdisplaybreaks
\begin{align*}
&e_{(u(a,b)+u(a+b,c)+i+j+k)+\widehat{a+b+c}}\\
&\mapsto \bigl(\DiscS(\hat{a}+\hat{b}+i+j,k)\epsilon(k,u(a,b)+i+j)\epsilon(u(a,b)+i+j+k,u(a+b,c))\bigr)^{-1}\\
&\quad\cdot\bigl(\DiscS(\hat{a}+i,j)\epsilon(j,i)\epsilon(i+j,u(a,b))\bigr)^{-1}\\
&\quad\cdot \bigl(\DiscS(\hat{b}+j,k)\epsilon(k,j)\epsilon(j+k,u(b,c))\bigr)\\
&\quad \cdot \bigl(\DiscS(\hat{a}+i,u(b,c)+j+k)\epsilon(u(b,c)+j+k,i)\epsilon(u(b,c)+i+j+k,u(a,b+c))\bigr)\\
&\quad\cdot e_{(u(a,b+c)+u(b,c)+i+j+k)+\widehat{a+b+c}} \\
&=\DiscS(\hat{a},u(b,c))
\frac{\epsilon(u(b,c),u(a,b+c))}{\epsilon(u(a,b),u(a+b,c))}
 e_{(u(a,b+c)+u(b,c)+i+j+k)+\widehat{a+b+c}}.
\end{align*}
}%
The last step holds for $i=j=k=0$, and we may verify the expected independence of $i$, $j$ and $k$ by cancelling all $\DiscS$ except $\DiscS(i,u(b,c))=\epsilon(i,u(b,c))\epsilon(u(b,c),i)^{-1}$ and then repeatedly applying the $2$-cocycle condition of $\epsilon$.

Similarly, the associated braiding is indicated by the dashed arrow in
\begin{equation*}
\begin{tikzcd}
\C_{i+\hat{a}} \otimes \C_{j+\hat{b}}
\ar{d}{\hat\iota_{\hat{a},\hat{b}}}
\ar{rr}{\DiscS(i+\hat{a},j+\hat{b})}
&&
\C_{j+\hat{b}} \otimes \C_{i+\hat{a}}
\ar{d}{\hat\iota_{\hat{b},\hat{a}}} \\
\C_{(u({a},{b})+i+j)+\widehat{a+b}}
\arrow[dashed, rr]{}{}
&&
\C_{(u({b},{a})+i+j)+\widehat{b+a}},
\end{tikzcd}
\end{equation*}
in any degree $i+\hat{a},j+\hat{b}\in I^ \perp$ in cosets $a,b\in I^\perp/I$. Following the basis element and using in addition to the previous arguments $\DiscS(i,x)\DiscS(x,i)=\DiscB(i,x)=1$ for $x\in I^ \perp$ and $i\in I$, we compute
{\allowdisplaybreaks
\begin{align*}
&e_{(u(a,b)+i+j)+\widehat{a+b}}\\
&\mapsto
\bigl(\DiscS(\hat{a}+i,j)\epsilon(j,i)\epsilon(i+j,u(a,b))\bigr)^{-1} (e_{\hat{a}+i}\otimes e_{\hat{b}+j}) \\
&\mapsto \bigl(\DiscS(\hat{a}+i,j)\epsilon(j,i)\epsilon(i+j,u(a,b))\bigr)^{-1} \DiscS(\hat{a}+i,\hat{b}+j) (e_{\hat{b}+j}\otimes e_{\hat{a}+i})\\
&\mapsto \bigl(\DiscS(\hat{a}+i,j)\epsilon(j,i)\epsilon(i+j,u(a,b))\bigr)^{-1} \DiscS(\hat{a}+i,\hat{b}+j)\\
&\quad\cdot \bigl(\DiscS(\hat{b}+j,i)\epsilon(i,j)\epsilon(j+i,u(b,a))\bigr)\,e_{\widehat{b+a}+(u(b,a)+j+i)}\\
&=\DiscS(\hat{a},\hat{b})\;e_{(u(b,a)+j+i)+\widehat{b+a}}.
\end{align*}
}%
This concludes the analysis of the tensor category $\cB_A$ and the braided tensor category $\cB_A^{\loc}$, in particular of a new abelian $3$-cocycle on $I^\perp/I$.
\end{proof}


\subsection{Example: Infinite Condensation Associated with Lattice}\label{sec_latticeDiscriminantForm}

We discuss an interesting application of the condensation of an infinite pointed braided fusion category by an infinite commutative algebra object. This parallels the construction of a lattice \voa{} together with its category of modules as extension of a Heisenberg \voa{} (cf.\ \cite{DL93} and \autoref{sec_LatticeVOA}).

\medskip

Let $L$ be an even lattice of rank $d=\rk(L)$, with the nondegenerate, symmetric bilinear form $\langle\cdot,\cdot\rangle\colon L\times L\to\Z$. The lattice $L$ can be embedded into its ambient vector space $\DiscGamma=\R^d=L\otimes_\Z\R$, which is equipped with the corresponding extension of $\langle\cdot,\cdot\rangle$. Let $\cB=\smash{\Vect_{\R^d}^{\DiscQ}}$ be the infinite braided tensor category with simple objects $\C_v$, $v\in\R^d$, and quadratic form
\begin{equation*}
\DiscQ(v)=\e(\langle v,v\rangle/2)
\end{equation*}
for $v\in\R^d$, and with an explicit representing abelian $3$-cocycle
\begin{align*}
\DiscS(u,v)&=\e(\langle u,v\rangle/2),\\
\DiscO(u,v,w)&=1
\end{align*}
for $u,v,w\in\R^d$.

We view $L\subset\R^d$ as a subgroup, and because we assumed the lattice to be even (i.e.\ $\langle v,v\rangle\in2\Z$ for all $v\in L$), this is an isotropic subgroup. Now, the construction discussed in this section produces a condensation by the infinite commutative algebra $A=\C_\epsilon[L]$ for a choice of $2$-cocycle $\epsilon\colon L\times L\to\C^\times$ (meaning $\d\epsilon = \DiscO=1$) whose skew form equals $\epsilon(u,v)\epsilon(v,u)^{-1}=\DiscS(u,v)=\e(\langle u,v\rangle/2)$ for all $u,v\in L$. However, some care must be taken because $A$ is infinite. One rigorous approach to this construction is to consider the tensor category of comodules over the group algebra $\C[\R^n]$ and the Hopf subalgebra $\C[L]$, and then apply the de-equivariantisation of Hopf algebras with the braided central structure defined by $\langle\cdot,\cdot\rangle$, as discussed in \cite{AGP14}. Another rigorous approach is to use the direct-limit completions discussed in \cite{CMY22}.

As a final result, we obtain, as expected, the pointed tensor category
\begin{equation*}
\cB_A = \Vect_{\R^d / L}
\end{equation*}
and the pointed braided tensor category
\begin{equation*}
\cB_A^{\loc} = \Vect_\Gamma^Q,
\end{equation*}
where we recall that dual lattice is $L^* = \{v \in \R^d \mid \langle v, w \rangle \in \Z \text{ for all } w \in L\}$ and the discriminant form $(\Gamma, Q) \coloneqq L^*\!/L$, with the nondegenerate quadratic form $Q$ on $\Gamma$ induced from $\DiscQ$ on $\DiscGamma = \R^d$ (see \autoref{sec:prel}).

We fix a choice $\hat{a}$ of representatives for the $L$-cosets $a$ in $\Gamma = L^*\!/L$. Then we set
\begin{equation*}
Q(a) = \DiscQ(\hat{a})
\end{equation*}
for all $a \in \Gamma$. We can also describe the representing abelian $3$-cocycle $(\sigma, \omega)$ on $\Gamma$ in terms of the abelian $3$-cocycle $(\DiscS, \DiscO)$ on $\DiscGamma = \R^d$.
To this end, define the following $2$-cocycle (depending on the choices of $\hat{a}$) with values in $L$:
\begin{equation*}
u(a, b) \coloneqq \hat{a} + \hat{b} - \widehat{a+b}
\end{equation*}
for $a, b \in \Gamma$. Then the formulae above yield the following well-known representing abelian $3$-cocycles on the discriminant form $\Gamma = L^*\!/L$ of an even lattice (cf.\ \cite{DL93}):
\begin{align*}
\sigma(a, b) &= \DiscS(\hat{a}, \hat{b}) = \e(\langle\hat{a}, \hat{b}\rangle / 2),\\
\omega(a, b, c) &= \DiscS(\hat{a}, u(b, c)) \frac{\epsilon(u(b, c), u(a, b+c))}{\epsilon(u(a, b), u(a+b, c))}
\end{align*}
for $a, b, c \in \Gamma$. The choices of the $2$-cocycle $\epsilon$ and of the $L$-coset representatives $\hat{a}$ are not essential, as they only affect the $2$-cocycle $u$ and the abelian $3$-cocycle $(\sigma, \omega)$ by a coboundary.


\subsection{Groups Acting on Discriminant Forms}\label{sec_DiscFormAut}

In the following, continuing in the setting of \autoref{sec_DiscForm}, we describe how automorphism groups of the commutative algebra $A$ lift to automorphism groups of the braided tensor category $\cB_A^{\loc}$.

\medskip

As in \autoref{sec_DiscForm}, let $\cB=\smash{\Vect_\DiscGamma^\DiscQ}$ be the braided tensor category associated with the discriminant form $(\DiscGamma,\DiscQ)$ and choose a representing abelian $3$-cocycle $(\DiscS,\DiscO)$. Let $I\subset\DiscGamma$ be an isotropic subgroup, that is $Q(i)=1$ for all $i\in I$. Consider a commutative algebra $A=\C_\epsilon[I]$ with basis $e_i$, $i\in I$, for $\epsilon(i,j)\epsilon(j,i)^{-1}=\DiscS(i,j)$. Then $\cB_A=\smash{\Vect_{\DiscGamma/I}}$ as tensor category and $\cB_A^\loc =\smash{\Vect_{\Gamma}^{Q}}$ as braided tensor category with $\Gamma=I^\perp/I$.

We now assume that $\DiscG$ is a finite group acting on $\DiscGamma$, with the action denoted by $v \mapsto \bar{g}.v$, and that this action preserves the quadratic form, i.e.\ $\DiscQ(\bar{g}.v) = \DiscQ(v)$ for $v\in\DiscGamma$ and $\bar{g}\in\DiscG$. For simplicity, we also assume that the action preserves a chosen representing abelian $3$-cocycle $(\DiscS, \DiscO)$, rather than merely preserving it up to a coboundary. This action induces a strict action of the group $\DiscG$ on the braided tensor category $\cB$, meaning it has both a trivial tensor structure $\tau^{\bar{g}}$ and a trivial composition structure $T_2^{\bar{g}, \bar{h}}$ (see \autoref{def_Gcrossed}). The image $\bar{g}_*A$ under the braided tensor functor is again a commutative algebra in $\cB$.

We would now like to assume that $A$ is fixed under the action of $\DiscG$. In the categorical context, this means specifying additional data: namely, for each $\bar{g} \in \DiscG$, we fix an algebra isomorphism $\eqi_{\bar{g}}\colon\bar{g}_* A \overset{\sim}{\longrightarrow} A$. However, in general, we cannot choose a set of isomorphisms $\eqi_{\bar{g}}$, $\bar{g}\in\DiscG$, that is multiplicative in $\bar{g}$. Instead, we introduce a central extension of groups
\begin{equation*}
1 \to C \to G \overset{-}{\to} \DiscG \to 1,
\end{equation*}
where $C$ is a subgroup of automorphisms of $A$ as an algebra and object in $\cB$. Then, we can choose a set of algebra isomorphisms $\eqi_g$, $g\in G$, that are multiplicative:
\begin{align*}
\eqi_g\colon \, \bar{g}_* A &\overset{\sim}{\longrightarrow} A,\\
\eqi_g \circ \bar{g}_*(\eqi_h) &= \eqi_{gh}
\end{align*}
for $g,h\in G$, or in other words, $(A, \eqi)$ is a commutative algebra in the equivariantisation:
\begin{equation*}
(A, \eqi) \in \cB \sslash G.
\end{equation*}

In the lattice context, it is common to refer to $G$ acting on $A$ in this way as a \emph{lift} of $\bar{G}$ acting on $\cB$. Note that the central extension $G$ also acts on $(\DiscGamma, \DiscQ)$ via its quotient to $\bar{G}$. Thus, in hindsight, we could begin with a nonfaithful action of $G$ and assume, without loss of generality, that $G = \bar{G}$.

Let us now be more explicit: for $g\in G$, a morphism $\bar{g}_*A \to A$ in $\cB$ is of the form
\begin{align*}
\eqi_g\colon e_{\bar{g}.i} &\longmapsto \eta_g(i)e_{\bar{g}.i}
\end{align*}
for some scalar function $\eta_g\colon I \to \C^\times$. The multiplicativity condition over the central extension reads $\eta_g \eta_h = \eta_{gh}$ for $g,h\in G$. The compatibility of each $\eqi_g$ with the algebra structure $\mu_A$ implies that the following diagram commutes:
\begin{equation*}
\begin{tikzcd}
\bar{g}_*\Bigl(\bigl(\bigoplus_{i\in I} \C_i\bigr) \otimes \bigl(\bigoplus_{j\in I} \C_j\bigr)\Bigr)
\ar{dd}{g_*(\mu_A)}
\ar{r}{\tau^{\bar{g}}}
&
\bar{g}_*\bigl(\bigoplus_{i\in I} \C_v\bigr) \otimes \bar{g}_*\bigl(\bigoplus_{j\in I} \C_w\bigr)
\ar{d}{\eqi_g\otimes \eqi_g}
\\
&
\bigl(\bigoplus_{i\in I} \C_{\bar{g}.i}\bigr) \otimes \bigl(\bigoplus_{j\in I} \C_{\bar{g}.j}\bigr)
\ar{d}{\mu_A}
\\
\bar{g}_*\bigl(\bigoplus_{i+j\in I} \C_{i+j}\bigr)
\ar{r}{\eqi_g}
&
\bigl(\bigoplus_{c\in I} \C_{\bar{g}.(i+j)}\bigr).
\end{tikzcd}
\end{equation*}
We assumed the strictness $\tau^{\bar{g}}=1$, so that this commuting diagram yields the condition
\begin{equation*}
\frac{\eta_g(i+j)}{\eta_g(i)\eta_g(j)}
=\frac{\epsilon(\bar{g}.i,\bar{g}.j)}{\epsilon(i,j)}.
\end{equation*}
Clearly, if we have two different solutions $\eta_g$ and $\eta_g'$, they differ by a group homomorphism, and it is sufficient to consider $\epsilon$ and $\eta_g$ with values in $\{\pm 1\}$.

We argue that there exists a solution $\eta_g$ to this equation: since $\epsilon(i, j)$ and $\epsilon(\bar{g}.i, \bar{g}.j)$ are $2$-cochains with coboundaries $\DiscO(i, j, k)$ and $\DiscO(\bar{g}.i, \bar{g}.j, \bar{g}.k)$, which agree by our simplifying assumption (otherwise it would have to be compensated by $\tau^{\bar{g}}$), the quotient $\epsilon(\bar{g}.i, \bar{g}.j)/\epsilon(i, j)$ is a $2$-cocycle. Also, its skew form $\DiscS(\bar{g}.i, \bar{g}.j)/\DiscS(i, j)$ is trivial, again by our simplifying assumption; so it is a $1$-coboundary. Hence, there is a $1$-cochain $\eta_g\colon I \to \C^\times$ with $\d\eta_g = \epsilon(\bar{g}.i, \bar{g}.j)/\epsilon(i, j)$.

\begin{prop}\label{prop_ActionOnCondensation}
Let $\DiscG$ be a finite group acting on $(\DiscGamma,\DiscQ)$ and additionally assume for simplicity that $\DiscG$ preserves the chosen abelian $3$-cocycle $(\DiscS,\DiscO)$, so that it induces a strict action on $\smash{\Vect_{\DiscGamma}^{\DiscQ}}$ as braided tensor equivalences. Let $A=\C_\epsilon[I]$ be a commutative algebra built from an isotropic subgroup $I\subset \DiscGamma$ with condensation $\cB_A^{\loc}=\smash{\Vect^Q_\Gamma}$ for $\Gamma=I^\perp/I$ as in \autoref{sec_DiscForm}. Let $\eqi_g\colon \bar{g}_*A\to A$, $g\in G$, be a $G$-equivariant structure for a central extension $G$ of $\bar{G}$ described by $\eta_g$, as discussed in the previous paragraph. Then this data induces an action of $G$ on the tensor category $\cB_A=\smash{\Vect_{\DiscGamma/I}}$ and on the braided tensor category $\cB_A^{\loc}=\smash{\Vect^Q_\Gamma}$ for $\Gamma=I^\perp/I$. Explicitly, the action is given for $g\in G$ on objects by
\begin{equation*}
g_{A*}\colon \C_a \longmapsto \C_{\bar{g}.a}
\end{equation*}
for any coset $a\in \DiscGamma/I$ or $a\in I^\perp/I$, respectively.

If we assume $\epsilon=1$ and $\eta_g=1$ for all $g\in G$, then $G=\DiscG$, and if we continue to assume $\DiscO=1$ as in \autoref{sec_DiscForm}, then the tensor structure on $g_{A*}$ is given by
\begin{equation*}
\tau^{g_A}_{a,b}\colon\; g_{A*}(\C_a \otimes_A \C_b)
\xrightarrow{\DiscS(\hat{a},\bar{g}.\hat{b}-\widehat{\bar{g}.b})}
g_{A*}(\C_a) \otimes_A g_{A*}(\C_b).
\end{equation*}
\end{prop}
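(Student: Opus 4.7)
The plan is to construct the $G$-action by transport of structure along the equivariant algebra $(A,\eqi)\in\cB\sslash G$, using the general principle that a $G$-equivariant algebra in a $G$-category $\cB$ induces a natural $G$-action on $\cB_A$. Concretely, for an $A$-module $(M,\rho_M)$ I would set $g_{A*}(M,\rho_M)\coloneqq(\bar g_*(M),\rho')$ with
\begin{equation*}
\rho'\colon A\otimes \bar g_*(M)\xrightarrow{\eqi_g^{-1}\otimes \id}\bar g_*(A)\otimes \bar g_*(M)\xrightarrow{(\tau^{\bar g}_{A,M})^{-1}}\bar g_*(A\otimes M)\xrightarrow{\bar g_*(\rho_M)}\bar g_*(M),
\end{equation*}
and extend to morphisms by $g_{A*}(f)=\bar g_*(f)$. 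The fact that $\eqi_g$ is an algebra morphism ensures $\rho'$ is an $A$-action, and the multiplicativity $\eqi_g\circ\bar g_*(\eqi_h)=\eqi_{gh}$ produces the composition constraint $T_2^{g,h}$ from $T_2^{\bar g,\bar h}$. Preservation of the locality condition $\rho=\rho\circ c_{M,A}\circ c_{A,M}$ is immediate, since $\bar g_*$ is a braided tensor equivalence, so the action restricts to $\cB_A^\loc$.

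Applied to the simple module $\C_a=\Ind_A\C_{\hat a}$, this gives $\bar g_*(\Ind_A\C_{\hat a})=\bigoplus_{i\in I}\C_{\bar g.(i+\hat a)}$ as an object of $\cB$, whose $A$-module structure (computed via $\eqi_g^{-1}$) agrees, up to the canonical isomorphism of induced modules from the proof of \autoref{prop_quadext} between different coset representatives, with $\Ind_A\C_{\widehat{\bar g.a}}=\C_{\bar g.a}$. This yields the formula $g_{A*}\C_a=\C_{\bar g.a}$ on both $\cB_A$ and $\cB_A^\loc$.

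For the explicit tensor structure in the simplified setting $\epsilon=1$, $\eta_g=1$, $\DiscO=1$, I would trace a chosen generator through both composites in
\begin{equation*}
\begin{tikzcd}[column sep=large]
g_{A*}(\C_a\otimes_A\C_b) \ar[r, "g_{A*}(\hat\iota_{\hat a,\hat b})"] \ar[d, "\tau^{g_A}_{a,b}"'] & g_{A*}\C_{a+b} \ar[d, "="] \\
g_{A*}\C_a\otimes_A g_{A*}\C_b \ar[r, "\hat\iota_{\widehat{\bar g.a},\widehat{\bar g.b}}"'] & \C_{\bar g.(a+b)}
\end{tikzcd}
\end{equation*}
and read off $\tau^{g_A}_{a,b}$ as the resulting scalar. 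The key input is the explicit formula for $\hat\iota$ from the proof of \autoref{prop_quadext}, combined with the change-of-representative isomorphisms identifying $\bar g_*(\Ind_A\C_{\hat a})$ with $\Ind_A\C_{\widehat{\bar g.a}}$. Because in general $\bar g.\hat b\neq\widehat{\bar g.b}$, inserting these identifications contributes $\DiscS(\hat a,\bar g.\hat b-\widehat{\bar g.b})$ and analogous factors for $a$ and $a+b$; after cancelling the $\epsilon$-contributions (which equal $1$) and using $G$-invariance and bimultiplicativity of $\DiscS$ (valid since $\DiscO=1$), only this single factor remains.

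The main obstacle I anticipate is precisely this bookkeeping of coset representatives: each of $\bar g.\hat a$, $\bar g.\hat b$, $\bar g.\widehat{a+b}$ must be replaced by $\widehat{\bar g.a}$, $\widehat{\bar g.b}$, $\widehat{\bar g.(a+b)}$ via the nontrivial change-of-representative isomorphism, and one must verify that all contributions except $\DiscS(\hat a,\bar g.\hat b-\widehat{\bar g.b})$ cancel. As a secondary check, the hexagon axiom for $(g_{A*},\tau^{g_A})$ and the pentagon-type compatibility with $T_2^{g,h}$ reduce, when $\DiscO=1$, to standard $2$-cocycle identities for $u$ together with the $G$-invariance of $\DiscS$, both of which follow from the hypotheses.
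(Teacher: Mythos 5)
Your proposal is correct and follows essentially the same route as the paper: the action is the composite $\eqi^{-1}_{g*}\circ\bar g_*$ on $A$-modules, the simple objects are tracked via induced modules and the change-of-representative isomorphisms, and the tensor structure is read off as the scalar discrepancy between $g_{A*}(\hat\iota_{\hat a,\hat b})$ and $\hat\iota_{\widehat{\bar g.a},\widehat{\bar g.b}}$ after inserting those identifications. The one cancellation you gesture at ("the $\epsilon$-contributions") is, in the paper's computation, the fact that $\epsilon=1$ forces $\DiscS$ to be trivial on $I\times I$, which is exactly what kills all the cross terms involving $i$, $j$ and the $I$-valued differences $\bar g.\hat a-\widehat{\bar g.a}$, $\bar g.\hat b-\widehat{\bar g.b}$, leaving only $\DiscS(\hat a,\bar g.\hat b-\widehat{\bar g.b})$.
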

\begin{proof}
It is apparent that a (braided) tensor functor $\bar{g}_*\colon \cB\to \cB$ for $g\in G$ maps a (commutative) algebra $A$ to a (commutative) algebra $\bar{g}_*A$ and by functoriality also induces an equivalence of tensor categories $\cB_{A}\to \cB_{\bar{g}_*A}$, namely by sending any $A$-module $(M,\rho)$ to the $\bar{g}_*A$-module $(\bar{g}_*M,\bar{g}_*\rho)$. Moreover, for a given equivariant structure $\eqi_g\colon \bar{g}_*A\overset{\sim}{\to} A$ we can precompose the action of a $\bar{g}_*A$-module with $\eqi^{-1}_g$ to get an $A$-action, which altogether gives a tensor functor
\begin{equation*}
g_{A*}\colon \cB_A \overset{\bar{g}_*}{\longrightarrow}\cB_{g_*A} \overset{\eqi^{-1}_{g*}}{\longrightarrow} \cB_A.
\end{equation*}
For a braided tensor functor and a commutative algebra this preserves the subcategory of local modules.

To compute the tensor structure, under the additional assumption $\DiscO=1$, we explicitly choose coset representatives $a=I+\hat{a}\in\DiscGamma$ for $a\in\DiscGamma/I$ and consider from \autoref{sec_DiscForm} the explicit realisation of $\C_a$, $a\in\DiscGamma/I$, in $\cB_A$ as induced modules $\Ind_A\C_{\hat{a}}$. Then, the first functor appearing in the definition of $g_{A*}$ maps
\begin{equation*}
\Ind_A\C_{\hat{a}}=\bigoplus_{j+\hat{a}\in I+\hat{a}} \C_{i+\hat{a}}
\overset{\bar{g}_*}{\longmapsto}
\bigoplus_{j+\hat{a}\in I+\hat{a}} \C_{\bar{g}.j+\bar{g}.\hat{a}}
\end{equation*}
with the $\bar{g}_*A$-action
\begin{equation*}
\bar{g}_*e_i \otimes e_{\bar{g}.j+\bar{g}.\hat{a}}
\overset{\bar{g}_*\rho}{\longmapsto}
\epsilon(i,j)
e_{\bar{g}.(i+j)+\bar{g}.\hat{a}}.
\end{equation*}
The second functor appearing in the definition of $g_{A*}$ precomposes this with $\eqi^{-1}_g$; so it produces on the same object the following $A$-action
\begin{align*}
e_{\bar{g}.i} \otimes e_{\bar{g}.j+\bar{g}.\hat{a}}
&\overset{\eqi^{-1}_{g*}}{\longmapsto}
\eta_g(i)^{-1}
\bar{g}_*e_{i} \otimes e_{\bar{g}.j+\bar{g}.\hat{a}}\\
&\overset{\bar{g}_*}{\longmapsto}
\eta_g(i)^{-1}\epsilon(i,j)
e_{\bar{g}.(i+j)+\bar{g}.\hat{a}}\\
&=\eta_g(j)\eta_g(i+j)^{-1} \epsilon(\bar{g}.i,\bar{g}.j)e_{\bar{g}.(i+j)+\bar{g}.\hat{a}}.
\end{align*}
After substituting $\bar{g}^{-1}.i$ and $\bar{g}^{-1}.j$, this is the action of $\Ind_A\C_{\bar{g}.\hat{a}}$ up to an additional factor $\eta_g(\bar{g}^{-1}.j)\eta_g(\bar{g}^{-1}.i+\bar{g}^{-1}.j)^{-1}$, and accordingly we can construct an explicit isomorphism of $A$-modules
\begin{align*}
\eqi^{-1}_{\bar{g}*}\bar{g}_*\Ind_A\C_{\hat{a}}
&\longrightarrow \Ind_A\C_{\bar{g}.\hat{a}},\\
e_{i+\hat{a}}
&\longmapsto
\eta_g(\bar{g}^{-1}.i)e_{i+\bar{g}.\hat{a}}.
\end{align*}
We also have to take into account that $\bar{g}.\hat{a}$ is not necessarily the chosen representative $\widehat{\bar{g}.a}$, so we postcompose with the respective isomorphism from \autoref{sec_DiscForm}
\begin{align*}
h\colon
\eqi^{-1}_{g*}\bar{g}_*\Ind_A\C_{\hat{a}}
&\longrightarrow \Ind_A\C_{\widehat{\bar{g}.a}},\\
e_{i+\hat{a}}
&\longmapsto
\eta_g(\bar{g}^{-1}.i)
\epsilon(i,\bar{g}.\hat{a}-\widehat{\bar{g}.a})
e_{(i+(\bar{g}.\hat{a}-\widehat{\bar{g}.a}))+\widehat{\bar{g}.a}}.
\end{align*}

The tensor structure $\tau^g_A$ now arises from the incompatibility of these isomorphisms $h$ with the isomorphisms $\hat{\iota}_{\hat{a},\hat{b}}$ used in the tensor product in \autoref{sec_DiscForm}:
\begin{equation*}
\begin{tikzcd}[column sep=small]
\eqi^{-1}_{g*}\bar{g}_*(\C_{i+\hat{a}} \otimes \C_{j+\hat{b}})
\ar{d}{\eqi^{-1}_{g*}\bar{g}_*(\hat\iota_{\hat{a},\hat{b}})}
\ar{r}{\id}
&
\eqi^{-1}_{g*}\bar{g}_*\C_{i+\hat{a}} \otimes \eqi^{-1}_{g*}\bar{g}_*\C_{j+\hat{b}}
\ar{d}{h\otimes h} \\
\eqi_{g*}^{-1}\bar{g}_*\C_{(u({a},{b})+i+j)+\widehat{a+b}}
\ar{d}{h}
&
\C_{(i+(\bar{g}.\hat{a}-\widehat{\bar{g}.a}))+\widehat{\bar{g}.a}} \otimes \C_{(j+(\bar{g}.\hat{b}-\widehat{\bar{g}.b}))+\widehat{\bar{g}.b}}
\ar{d}{\hat\iota_{\widehat{\bar{g}.a},\widehat{\bar{g}.b}}}
\\
\C_{(u({a},{b})+i+j+(\bar{g}.\hat{a+b}-\widehat{\bar{g}.(a+b)}))+\widehat{\bar{g}.(a+b)}}
\arrow[dashed,r]{}{}
&
\C_{(u(\bar{g}.a,\bar{g}.b)+i+j+(\bar{g}.\hat{a}-\widehat{\bar{g}.a}+(\bar{g}.\hat{b}-\widehat{\bar{g}.b})+\widehat{\bar{g}.(a+b)}}
\end{tikzcd}
\end{equation*}
Following again a basis element $e_{i+\hat{a}}\otimes e_{j+\hat{b}}$ gives for the dashed morphism the following scalar factor, under the assumption $\DiscO=1$ and also the simplifying assumption $\epsilon=1$ and $\eta_g=1$ for all $g\in G$ (which in particularly implies $\DiscS(x,y)=1$ for $x,y\in I$):
\begin{equation*}
\DiscS(i+\hat{a},j)^{-1}
\DiscS\bigl(i+(\bar{g}.\hat{a}-\widehat{\bar{g}.a})+\hat{a},j+(\bar{g}.\hat{b}-\widehat{\bar{g}.b})\bigr)=\DiscS(\hat{a},\bar{g}.\hat{b}-\widehat{\bar{g}.b}),
\end{equation*}
which we can see to be again independent of $i$ and $j$, as it should. This is the asserted formula for the tensor structure $\tau^g_A$.
\end{proof}

\begin{prob}
The considerations in this section should be repeated without the simplifying assumptions, i.e.
\begin{enumerate}
\item for $\bar{g}$ only fixing $Q$ but not the chosen abelian $3$-cocycle $(\DiscS,\DiscO)$ (so that the action on $\cB$ will be nonstrict),
\item for $\DiscO\neq 1$,
\item for $\epsilon\neq 1$ and $\eta_g\neq 1$ a nontrivial lift.
\end{enumerate}
\end{prob}
We remark that in the lattice setting below (\autoref{sec_idea2_infTY}), where $\smash{\Vect_{\R^d}^{\DiscQ}}$ admits a distinguished representing abelian $3$-cocycle with $\DiscS$ symmetric and $\DiscO=1$, the first two simplifications may still be assumed, but the third is a proper restriction that forces us to restrict to what we call strongly even lattices in \cite{GLM24b}, \autoref*{GLM2ass_strongeven} (see \autoref{sec:prel}).

We return to the example discussed in \autoref{sec_latticeDiscriminantForm} of an infinite condensation $\cB_A^{\loc}=\smash{\Vect_\Gamma^Q}$, $\Gamma=L^*\!/L$, associated with an even lattice $L$ in $(\R^d,\DiscQ)$.

\begin{ex}\label{exm_counterexampleA1}
Let $\bar{g}$ be the $(-1)$-involution $\bar{g}.v = -v$, $v\in\R^d$, which is an isometry of $(\R^d, \DiscQ)$ and thus acts strictly on $\smash{\Vect_{\R^d}^{\sigma,1}}$. Any lattice $L$ is invariant under $\bar{g}$. If we assume that all scalar products in $L$ are even (as per \autoref*{GLM2ass_strongeven} in \cite{GLM24b}, see \autoref{sec:prel}), then $\epsilon = 1$, and we may choose the trivial lift $\eta_g = 1$. Under these conditions, the nontrivial tensor structure for the action of $G = \DiscG = \Z_2$ is given by \autoref{prop_ActionOnCondensation}.

Let us remark that the case where $\Gamma=L^*\!/L$ is a group of odd order, which is the other extreme, is not covered by our simplifying assumptions since $\epsilon \neq 1$. However, in this case, we can also choose a trivial lift $\eta_g=1$, and the resulting tensor structure is trivial. This matches the strict $\Z_2$-action on $\smash{\Vect_\Gamma^Q}$ for odd-order $\Gamma$ as discussed in \autoref{sec_TY}.

Let $L$ be the (even) $A_1$-lattice, i.e.\ $L = \alpha\Z$ with $\langle \alpha, \alpha \rangle = 2$. Its dual lattice is $L^* = (\alpha/2)\Z$, and the discriminant form is $\Gamma = L^*\!/L \cong \Z_2$. We consider both the trivial and nontrivial actions of $\Z_2 = \langle g \rangle$ on $L$, where $g.\alpha = \pm\alpha$. In both cases, the induced actions of $\Z_2$ on $\Gamma$, which parametrises the objects of $\smash{\Vect_\Gamma^Q}$, are trivial because $\alpha/2 + \alpha\Z = -\alpha/2 + \alpha\Z$.

For the trivial action, the tensor structure is $\tau^g_A = \id$, while for the nontrivial action, the tensor structure is nontrivial, as indicated by \autoref{prop_ActionOnCondensation}. Specifically, let $a = \alpha/2 + \alpha\Z$ be the nonzero coset in $\Gamma$ and choose the representative $\hat{a} = \alpha/2$. Then, $g.\hat{a} - \widehat{g.a} = -\alpha$, and $\DiscS(\hat{a}, g.\hat{a} - \widehat{g.a}) = \e(\langle \alpha/2, \alpha \rangle / 2) = -1$. Thus, the tensor structure is given by
\begin{equation*}
\tau^g_A\colon g_{A*}(\C_a \otimes_A \C_a)
\xrightarrow{\;-1\;}
g_{A*}(\C_a) \otimes_A g_*(\C_a).
\end{equation*}
This demonstrates that different actions on the lattice $L$, and hence on the commutative algebra $A = \C_\epsilon[L]$, can result in the same action on objects but different tensor structures.
\end{ex}


\section{\texorpdfstring{$G$}{G}-Crossed Extensions Versus Condensations}\label{sec_GExtensionVsCurrentExtension}

In this section, we establish a connection between braided $G$-crossed extensions of a braided tensor category and condensations by a commutative algebra. As explained in \autoref{idea_2}, these processes commute in a certain sense. As a first example, we consider the braided $\Z_2$-crossed extensions $\smash{\Vect^\DiscQ_\DiscGamma}[\Z_2, \bar{\eps}]$ of Tambara-Yamagami type and show that \autoref{idea_2} reproduces a braided $\Z_2$-crossed extension $\smash{\Vect^Q_\Gamma}[\Z_2, \eps]$ of the condensation, for the choice $\eps = \bar{\eps}$.


\subsection{Main Statement}\label{sec_ThmGExtensionVsCurrentExtension}

In the following, we prove that taking braided $G$-crossed extensions of braided tensor categories commutes, in a suitable sense, with passing to local modules of an algebra. For \voa{}s, this result implies that taking orbifolds and conformal extensions commute in a similar sense (see \autoref{sec_VOAIdea2}). In the context of anyon condensation, the result has appeared in \cite{BJLP19}. We state the main assertion:

\begin{thm}\label{thm_currentExtVsCrossedExt}
Let $G$ be a finite group and $\cC = \bigoplus_{g \in G} \cC_g$ a braided $G$-crossed tensor category with $G$-braiding $c^G$, and let $\cB = \cC_1$.

Let $A$ be a commutative algebra object in $\cB$, and consider the braided tensor category $\cB_A^\loc$. Suppose that $A$ is invariant under the $G$-action in the sense that there is a $G$-equivariant structure $\eqi = (\eqi_g)_{g \in G}$ with algebra isomorphisms
\begin{equation*}
\eqi_g\colon g_*(A) \to A, \quad \eqi_{gh} = \eqi_g \circ g_*(\eqi_h),
\end{equation*}
or in other words, let $(A, \eqi)$ be an algebra in $\cB \sslash G$. Then:
\begin{enumerate}[label=(\alph*)]
\item\label{item:ass0} The category of $A$-modules $(M,\rho)$ in $\cC=\bigoplus_{g\in G}\cC_g$ is a tensor category with a $G$-grading and a $G$-action with neutral component $\cB_A$.
\item\label{item:ass1} The subcategory of $A$-modules that are $G$-local with respect to $\eqi$ in the sense that the following two morphisms coincide
\begin{equation}\begin{split}\label{formula_Glocal}
A\otimes M&\overset{\rho}{\longrightarrow} M,\\
A\otimes M&\overset{c_{A,M}^G}{\longrightarrow}
M\otimes A\overset{c_{M,A}^G}{\longrightarrow}
g_*A\otimes M\overset{\eqi_g}{\longrightarrow} A\otimes M
\overset{\rho}{\longrightarrow} M
\end{split}\end{equation}
is a braided $G$-crossed extension of $\cB_A^\loc$.
\item\label{item:ass2} The $G$-equivariantisation of this braided $G$-crossed extension is equivalent to the category of local modules $(\cC\sslash G)^\loc_{(A,\eqi)}$.
\end{enumerate}
\end{thm}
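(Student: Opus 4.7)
The plan is to build the structures asserted in (a), (b), (c) in order, with (b) as the main content. For (a), since $A\in\cB=\cC_1$ and the $G$-grading is compatible with $\otimes$, for any $A$-module $(M,\rho)$ in $\cC$ the action $\rho\colon A\otimes M\to M$ preserves the $G$-decomposition $M=\bigoplus_g M_g$ component by component, so each $M_g$ is itself an $A$-module. This produces a $G$-grading $\cC_A=\bigoplus_{g\in G}(\cC_A)_g$ with neutral component $\cB_A$. The tensor product $\otimes_A$ from \autoref{sec_CurrentExtension} makes sense verbatim in $\cC$ and, since the grading is monoidal, respects it. The $G$-action on $\cC_A$ is defined by $g_{A*}(M,\rho) = (g_*M, \rho')$ with
\begin{equation*}
\rho'\colon A\otimes g_*M\xrightarrow{\eqi_g^{-1}\otimes\id}g_*A\otimes g_*M\xrightarrow{(\tau^g_{A,M})^{-1}}g_*(A\otimes M)\xrightarrow{g_*\rho}g_*M,
\end{equation*}
and the multiplicativity $\eqi_{gh}=\eqi_g\circ g_*(\eqi_h)$ together with the coherence of $T_2^{g,h}$ and $\tau^g$ give the composition structure of a $G$-action on $\cC_A$ in the sense of \autoref{def_Gcrossed}.

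For part (b), I would promote the $G$-crossed braiding $c^G$ on $\cC$ to one on $\cC_A^{\loc}$. Given $M\in(\cC_A^{\loc})_g$ and $N\in\cC_A^{\loc}$, the candidate morphism is $c^G_{M,N}\colon M\otimes N\to g_*N\otimes M$. I would show it descends to $M\otimes_A N\to g_{A*}N\otimes_A M$ in two steps: first, the coequaliser on the source is matched with that on the target using naturality of $c^G$ and the fact that $c^G_{A,-}$ is the ordinary braiding (since $A\in\cC_1$); second, the nontrivial identity that forces the shift $N\mapsto g_*N$ on the target to produce a genuine $A$-balanced map is \emph{precisely} the $G$-locality condition \eqref{formula_Glocal}, where the equivariant twist $\eqi_g$ compensates for the fact that a $\cC_g$-object is braided past $A$. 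Once descent is established, the hexagon-type axioms and compatibility with $g_{A*}$ and $\tau^g_{A,M}$ follow by directly inheriting the corresponding diagrams in $\cC$ and rewriting every ambient $\tau^g$, $T_2^{g,h}$ and $\eqi$ using the cocycle relations above. Faithfulness of the $G$-grading on $\cC_A^{\loc}$ is inherited from that of $\cC$ by taking $\Ind_A$ of a nonzero object in each $\cC_g$.

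Part (c) I would reduce to the general principle that equivariantisation commutes with module categories over an equivariant algebra: the functor $(M,\rho)\mapsto(M,\rho,\eqi^M)$ gives an equivalence $\cC_A\sslash G\simeq(\cC\sslash G)_{(A,\eqi)}$. Under this equivalence I would check that $G$-locality \eqref{formula_Glocal} of $(M,\rho)$, combined with the equivariance $\eqi^M$, matches exactly the ordinary locality of the image in the braided tensor category $\cC\sslash G$ --- the $\eqi_g$ appearing in \eqref{formula_Glocal} is precisely the factor used to rewrite the double $G$-crossed braiding as an ordinary double braiding in the equivariantisation. The main obstacle I expect is the bookkeeping in part (b): ensuring that the descent of $c^G$ to $\otimes_A$ and the hexagon identities are verified without inconsistency between the $G$-coherence data $(\tau^g, T_2^{g,h}, \eqi)$ and the $A$-coequalisers. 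Everything else is formal once these compatibilities have been pinned down.
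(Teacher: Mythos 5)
Your proposal follows essentially the same route as the paper's proof: in (a) you use that $c^G_{A,-}$ is an honest half-braiding because $A$ sits in the neutral component (the paper phrases this as $A$ lifting to a commutative algebra in $\cZ(\cC)$) to get $\otimes_A$, the grading and the $\eqi$-twisted $G$-action $g_{A*}$; in (b) you descend $c^G$ to $\otimes_A$ with the $G$-locality condition \eqref{formula_Glocal} doing exactly the work of making the braiding an $A$-module morphism; and in (c) you unpack $(\cC\sslash G)^{\loc}_{(A,\eqi)}$ to match $G$-local modules with equivariant structure, just as the paper does. The level of detail is comparable to the paper's own (rather schematic) argument, so there is nothing to add.
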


The statement of the theorem is illustrated by the following diagram:
\begin{equation*}
\begin{tikzcd}
\cB
\arrow[hookrightarrow]{rrrr}{\text{$G$-crossed extension}}
\arrow{d}{\text{$\otimes_A$}}
\arrow[bend right=50,rightsquigarrow]{dd}{}
&&&&
\cC
\arrow{d}{\text{$\otimes_A$}}
\arrow[bend left=50,rightsquigarrow]{dd}{}
\\
\cB_A
&&&&
\cC_A
\\
\cB_A^\loc
\arrow[hookrightarrow]{u}{}
\arrow[hookrightarrow]{rrrr}{\text{$G$-crossed extension}}
&&&&
\cC_A^\loc
\arrow[hookrightarrow]{u}{}
\end{tikzcd}
\end{equation*}
Note that, in contrast to \autoref{sec_DiscFormAut}, we implicitly set $G$ acting on $A$ equal to its quotient $\bar{G}$ acting on $\cB$. This is possible without loss of generality by allowing $G$ to act nonfaithfully on $\cB$. The reason is that the two braided crossed extensions should be over the same group.

\begin{proof}
\begin{enumerate}[label=(\alph*),wide]
\item The $G$-braiding $\{c_{A,X}\colon A \otimes X \to X \otimes A\}_{X\in \cC}$ is a half-braiding and defines a commutative central structure for $A$, meaning that $A \in \mathcal{Z}(\cC)$ is a commutative algebra. This allows us to apply standard constructions to endow the category of $A$-modules in $\cC$ with a tensor structure $\otimes_A$ and a unit object $A$. For an algebra $A$ in the neutral component $\cC$ of a $G$-graded tensor category $\bigoplus_{g \in G} \cC_g$, the category of representations naturally inherits a $G$-graded tensor structure.

For an algebra $(A,\eqi)$ with $G$-equivariant structure $\eqi$ in a tensor category $\cC$ with $G$-action, the category of $A$-representations carries a $G$-action, as in \autoref{sec_DiscFormAut}, by sending an $A$-module $(M,\rho)$ to the object $g_*M$ and using the equivariant structure
\begin{equation*}
g_{A*}\colon \cB_A \overset{g_*}{\longrightarrow}\cB_{g_*A} \overset{\eqi^{-1}_*}{\longrightarrow} \cB_A.
\end{equation*}
The composition structure $T_2$ and the tensor structure for the category of $A$-modules are derived from the corresponding structures of the underlying $G$-action on $\cC$. This can be verified directly, assuming the $G$-action is strict, which can be done without loss of generality due to the coherence theorem for braided $G$-crossed tensor categories.

\item We now consider the subcategory of $G$-local $A$-modules, as defined in the theorem statement, which depends on the $G$-equivariant structure $\eqi$. The $G$-crossed braiding of these $A$-modules is induced from the underlying $G$-crossed braiding of $\cC$, similar to the noncrossed case. This braiding is a morphism of $A$-modules due to the $G$-locality condition, aligning with the standard case and satisfying (iii) of \autoref{def_Gcrossed}.

\item We unpack the definitions of $(\cC \sslash G)^\loc_{(A, \eqi)}$. By definition, $(A, \eqi)$ is a commutative algebra in $\cB \sslash G = \cC_1 \sslash G$ and therefore also in $\cC \sslash G$. By definition, a module $N = \bigoplus_{g \in G} N_g$ over $(A, \eqi)$ in $\cC \sslash G$ is a module $(N, \rho) = \bigoplus_{g \in G} (N_g, \rho_g)$ in $\cC = \bigoplus_{g \in G} \cC_g$ over $A$ in $G$-degree 1, together with an equivariant structure $\eqi^N_g\colon g_*(N) \to N$ such that the action is compatible with the equivariant structures, in the sense that the following diagram commutes:
\begin{equation*}
\begin{tikzcd}
g_*(A)\otimes g_*(N)
\arrow{r}{(\tau^g)^{-1}}
\arrow[swap]{d}{\eqi_g\otimes \eqi^N_g}
& g_*(A\otimes N)
\arrow{r}{g_*(\rho)}
& g_*(N) \arrow[swap]{d}{\eqi_g^N} \\
A\otimes N \arrow{rr}{\rho}
&& M
\end{tikzcd}
\end{equation*}
A module $(N,\rho,\eqi^N)$ over $(A,\eqi)$ is local if and only if each module $(N_g,\rho_g)$ in $\cC_g$ fulfils the $G$-locality condition~\eqref{formula_Glocal}. We know from \autoref{sec_CurrentExtension}:
\begin{enumerate}[label=(\arabic*)]
\item The set of $(A,\eqi)$-modules $(\cC\sslash G)_{(A,\eqi)}$ is a tensor category. The tensor product in $\cC \sslash G$ is given by
\begin{equation*}
(N',\eqi^{N'})\otimes (N'',\eqi^{N''})=(N'\otimes N'',\eqi^{N'\otimes N''})
\end{equation*}
with the equivariant structure
\begin{equation*}
\eqi^{N'\otimes N''}\colon g_*(N'\otimes N'')
\overset{\tau^g}{\longrightarrow} g_*N'\otimes g_*N''
\overset{\eqi^{N'}_g\otimes \eqi^{N''}_g\otimes \id}{\longrightarrow} N'\otimes g_*.
\end{equation*}
The tensor product $\otimes_A$ is given by a coequaliser $N'\otimes N''\to N'\otimes_A N''$ as discussed in \autoref{sec_CurrentExtension}. The action of $A$ is given by the action on the left factor (which, of course, includes an associator). This action is again an equivariant $(A,\eqi)$-action.
\item
The set of local $(A, \eqi)$-modules $(\cC \sslash G)_{(A, \eqi)}$ forms a braided tensor category. The braiding is induced from the braiding in $\cC \sslash G$, and is defined component-wise by
\begin{equation*}
N_g'\otimes N_h''
\overset{c^G}{\longrightarrow}g_*N_h''\otimes N'_g
\overset{\eqi^{N'}\otimes \id}{\longrightarrow}N_{ghg^{-1}}''\otimes N'_g.
\end{equation*}
\end{enumerate}
From this description it is clear that the braided tensor category coincides with the equivariantisation of the braided $G$-crossed tensor category in (b).\qedhere
\end{enumerate}
\end{proof}

\begin{rem}
In favourable cases, we would expect that any braided $G$-crossed extension of $\cB_A^\loc$ can be obtained from a braided $G$-crossed extension of $\cB$. This is straightforward when $A$ describes a simple-current extension, but generally, a strategy is needed to construct the twisted sectors of $\cC$ from the twisted sectors of $\cC_A^\loc$. One approach, although potentially tedious, involves realising all twisted components as modules over algebras in $\cC_A^\loc$, then taking $A$-invariants, and finally checking compatibility with all additional structures.
\end{rem}


\subsection{Example: From Tambara-Yamagami to Tambara-Yamagami}\label{sec_TYtoTY}

In the following, we demonstrate \autoref{thm_currentExtVsCrossedExt} using the example discussed in \autoref{sec_DiscForm} and \autoref{sec_TY}. This example also serves as a template for the new results presented in \autoref{sec_idea2_infTY}.

\medskip

Let $(\DiscGamma, \DiscQ)$ be a discriminant form, i.e.\ a finite, abelian group equipped with a nondegenerate quadratic form $\DiscQ\colon\DiscGamma\to\C^\times$, with associated bimultiplicative form $\DiscB$. We consider the associated ribbon fusion category $\cB = \smash{\Vect_\DiscGamma^\DiscQ}$, characterised by a representing abelian $3$-cocycle $(\DiscS, \DiscO)$. For simplicity, we restrict to the case that $|\DiscGamma|$ is odd and $(\DiscS, \DiscO) = (\DiscB^{1/2}, 1)$.

As in \autoref{sec_DiscForm}, let $I \subset \DiscQ$ be an isotropic subgroup, and consider the associated commutative algebra $A = \C_\epsilon[I]$ with $\epsilon(a, b)\epsilon(b, a)^{-1} = \DiscS(a, b)$, such that $\cB_A^\loc=\smash{\Vect_\Gamma^Q}$, where $\Gamma = I^\perp/I$ and $Q$ is the quadratic form $\DiscQ$ restricted to $I^\perp$ and factored through $I^\perp/I$. Since $\smash{\DiscB(i, j)}= \pm 1$ for $i, j \in I$, and under the additional assumption that $|\DiscGamma|$ is odd, it follows that $\DiscB(i, j) = 1$. Thus, we may choose $\epsilon = 1$.

As in \autoref{sec_DiscFormAut}, consider a group $G$ acting on $\DiscGamma$ in a way that preserves both $\DiscQ$ and the chosen representing abelian $3$-cocycle $(\DiscS, \DiscO)$. This action induces a strict action on $\cB$. As an example, consider the case where $G = \langle g \rangle \cong \Z_2$ acts on $\DiscGamma$ as $-\!\id$, which preserves the abelian $3$-cocycle $(\smash{\DiscB}^{1/2}, 1)$ when $|\DiscGamma|$ is odd. This results in a strict action on $\smash{\Vect_\DiscGamma^\DiscQ}$ via strict tensor functors:
\begin{equation*}
g_* \C_v = \C_{-v},\quad \tau^g_{\C_v, \C_w} = \id
\end{equation*}
for all $v, w \in \DiscGamma$. We need to choose an equivariant structure $\eqi_g\colon g_*A \to A$, defined on the basis $e_i \in \C_i$ of $A$ by $g_*e_i \mapsto \eta_g(i)e_{-i}$, where $\eta_g(i)$ is a $1$-cochain on $I$ satisfying $\d\eta_g(i, j) = \epsilon(g.i, g.j)\epsilon(i, j)^{-1}$ for all $i,j\in I$. Since $|\DiscGamma|$ is odd, we can choose $\epsilon = 1$, allowing us to set $\eqi_g = 1$. In this scenario, $G = \bar{G}$.

\medskip

We are now in a position to demonstrate \autoref{thm_currentExtVsCrossedExt}. Consider from \autoref{cor_TY}, for a choice of sign $\bar\eps$, the Tambara-Yamagami category (with positive quantum dimensions)
\begin{equation*}
\Vect^\DiscQ_\DiscGamma[\Z_2,\bar\eps]=\cC_1\oplus \cC_g
\end{equation*}
as a braided $G$-crossed extension of $\cB=\cC_1$. Then the statement of \autoref{thm_currentExtVsCrossedExt} is that condensation by the $\Z_2$-equivariant commutative algebra $(A, \eqi)$ results in another braided $\Z_2$-crossed extension (with positive quantum dimensions) of the braided tensor category $\smash{\cB_A^{\loc}}=\smash{\Vect_{\Gamma}^{Q}}$. By the uniqueness of such extensions, this extension must again be of the form $\smash{\Vect_{\Gamma}^{Q}}[\Z_2,\eps]$ for some sign $\eps$:
\begin{equation*}
\begin{tikzcd}
\Vect_\DiscGamma^\DiscQ
\arrow[rightsquigarrow]{d}{}
\arrow[hookrightarrow]{rrr}{\text{$\Z_2$-crossed ext.}}
&&&
\Vect^\DiscQ_\DiscGamma[\Z_2,\bar\eps]
\arrow[rightsquigarrow]{d}{}
\\
\Vect_{\Gamma}^{Q}
\arrow[hookrightarrow]{rrr}{\text{$\Z_2$-crossed ext.}}
&&&
\Vect^{Q}_{\Gamma}[\Z_2,\eps]
\end{tikzcd}
\end{equation*}

\autoref{thm_currentExtVsCrossedExt} explicitly produces this braided $G$-crossed extension and proves that $\eps = \bar{\eps}$. For the last claim, we note that the $G$-ribbon twist in the condensation by $(A, \eqi)$ is simply the $G$-ribbon twist in the base category $\cC$. By the formulae in \autoref{sec_TY}, this means that
\begin{equation*}
\frac{\eps}{\alpha} = \frac{\bar{\eps}}{\bar{\alpha}}.
\end{equation*}
By the definition of $\alpha$ and $\bar{\alpha}$ and \autoref{rem_gauss}, the inverse square of both sides gives
\begin{equation*}
\eps^{-1} \e(-\sign(\Gamma, Q^{-1/2}))
=
\bar{\eps}^{-1} \e(-\sign(\DiscGamma, \DiscQ^{-1/2})),
\end{equation*}
and the signatures of $(\DiscGamma, \DiscQ^{-1/2})$ and $(\Gamma, Q^{-1/2})=(I^\perp/I, \DiscQ^{-1/2}|_{I^\perp})$ agree. This is a nontrivial statement about Gauss sums, as found in \cite{CS99}. Hence, $\eps = \bar{\eps}$ and we find:
\begin{prop}\label{prop_TYtoTY}
The category of $\Z_2$-local modules over $(A,\eqi)$ in $\smash{\Vect^\DiscQ_\DiscGamma}[\Z_2,\bar\eps]$ is the braided $\Z_2$-crossed tensor category
\begin{equation}\label{eq: equation in proposition}
\bigl(\Vect_{\DiscGamma}^{\DiscQ}[\Z_2, \bar{\eps}]\bigr)_{(A, \eqi)}^{\loc}
\cong \Vect_{\Gamma}^{Q}[\Z_2, \eps]
\end{equation}
with the sign $\eps = \bar{\eps}$.
\end{prop}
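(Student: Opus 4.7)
The plan is to combine \autoref{thm_currentExtVsCrossedExt} with the classification of \autoref{cor_TY} and then distinguish the sign $\eps$ by matching the $\Z_2$-ribbon twist on the twisted simple object before and after condensation.

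First I would apply \autoref{thm_currentExtVsCrossedExt} to $\cC=\smash{\Vect_\DiscGamma^\DiscQ}[\Z_2,\bar\eps]$ and the equivariant algebra $(A,\eqi)$: this immediately yields that $(\smash{\Vect_\DiscGamma^\DiscQ}[\Z_2,\bar\eps])_{(A,\eqi)}^{\loc}$ is a braided $\Z_2$-crossed extension of $(\smash{\Vect_\DiscGamma^\DiscQ})_A^{\loc}$, and by \autoref{prop_quadext} the latter is $\smash{\Vect_\Gamma^Q}$. The induced $\Z_2$-action on $\Gamma=I^\perp/I$ is again $-\!\id$, and $|\Gamma|$ is odd since $|\DiscGamma|$ is. By \autoref{cor_TY}, up to equivalence the only braided $\Z_2$-crossed extensions of $\smash{\Vect_\Gamma^Q}$ with positive quantum dimensions are the two candidates $\smash{\Vect_\Gamma^Q}[\Z_2,\pm 1]$. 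Positive quantum dimensions on the condensation side follow from part (c) of \autoref{thm_currentExtVsCrossedExt} together with the pseudo-unitarity of $\smash{\Vect_\DiscGamma^\DiscQ}[\Z_2,\bar\eps]\sslash\Z_2$, which is preserved under condensation by a commutative algebra of trivial twist.

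To pin down the sign, I would compute the $\Z_2$-ribbon twist on the unique simple object $\X$ in the twisted sector of the condensation. Since the $\Z_2$-ribbon structure on the local-module category is inherited from the ambient $\Z_2$-crossed extension, and the underlying object of $\X$ is that of $\bar\X$ (equipped with an $A$-module structure that acts by scalars along the $A$-orbit), its twist equals the twist on $\bar\X$ in $\smash{\Vect_\DiscGamma^\DiscQ}[\Z_2,\bar\eps]$, namely $\bar\eps/\bar\alpha$ by \autoref{thm_TYribbon} and \autoref{cor_TY}. Applying the analogous formula to the condensed side gives $\theta_\X=\eps/\alpha$, so the two must coincide. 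Inverting and squaring, together with $\alpha^2=\eps\,\e(-\sign(\Gamma,Q^{1/2})/8)$ from \autoref{rem_gauss}, reduces the equality to
\[
\eps\,\e(-\sign(\Gamma,Q^{1/2})/8)=\bar\eps\,\e(-\sign(\DiscGamma,\DiscQ^{1/2})/8).
\]

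The key classical input — and in a sense the main obstacle, though it is standard — is the signature/Gauss-sum identity $\sign(\Gamma,Q^{1/2})\equiv\sign(\DiscGamma,\DiscQ^{1/2})\pmod 8$ expressing the invariance of the signature of a discriminant form under isotropic condensation, which can be cited from \cite{CS99}. Plugging it in gives $\eps=\bar\eps$ and completes the identification \eqref{eq: equation in proposition}. A point that deserves some care is verifying that the twisted sector of the condensation consists of a single simple object of quantum dimension $|\Gamma|^{1/2}=|\DiscGamma|^{1/2}/|I|$ (rather than a direct sum), so that the object ``$\X$'' in the two Tambara-Yamagami categories can be matched directly; this should drop out of the fusion rule $\bar\X\otimes\bar\X=\bigoplus_{v\in\DiscGamma}\C_v$ together with the $A$-orbit structure on the simple objects of $\cC_g$, reflecting the equality $|\DiscGamma|=|\Gamma|\cdot|I|^2$.
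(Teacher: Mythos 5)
Your proposal is correct and follows essentially the same route as the paper: apply \autoref{thm_currentExtVsCrossedExt} to get a braided $\Z_2$-crossed extension of $\smash{\Vect_\Gamma^Q}$, invoke the uniqueness from \autoref{cor_TY} to reduce to the two candidates $\smash{\Vect_\Gamma^Q}[\Z_2,\pm1]$, and fix the sign by matching the $\Z_2$-ribbon twist $\eps/\alpha=\bar\eps/\bar\alpha$ on the twisted simple object, which after squaring comes down to the invariance of the signature under isotropic condensation cited from \cite{CS99}. The point you flag about the twisted sector being a single simple object is exactly what the paper verifies in the example immediately following the proposition, via the locality condition forcing a unique one-dimensional representation of the odd-order group $I$.
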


The main point is that the category on the left-hand side of equation \eqref{eq: equation in proposition} can be computed directly without prior knowledge. To illustrate the method, we now compute explicitly the objects of $\bigl(\Vect_{\DiscGamma}^{\DiscQ}[\Z_2, \bar{\eps}]\bigr)_{(A, \eqi)}^{\loc}$.

For a full computation of all associators, braidings and tensor structures, the reader should consult the new examples in \autoref{sec_idea2_infTY}.

\begin{ex}
The $(A,\eqi)$-modules over the untwisted sector $\cC_1 = \cB$ correspond to the commutative algebra condensation we initially constructed in \autoref{sec_DiscForm}, and we denote its simple objects by $\C_{a}$ for cosets $a \in I^\perp/I = \Gamma$. Now, let us consider $(A, \eqi)$-modules over the twisted sector $\cC_g$, which is the category $\Vect$ generated by the simple object $\X$. A general object in this sector is denoted by $V\X$, where $V = \C^n$ for some $n$, representing the multiplicity space. Modules of $A$ in the twisted sector are objects $M = V\X$ with an action given by
\begin{equation*}
\rho_M\colon A \otimes V\X \to V\X.
\end{equation*}

These modules are in bijective correspondence with representations of the twisted group algebra $\C_\epsilon[I]$ on the multiplicity space $V$, as the Tambara-Yamagami associator on $\C_i \otimes \C_j \otimes \X$ is trivial. We denote this action by $e_i.\basis$ for $i \in I$ and $\basis \in V$.

The local $(A, \eqi)$-modules in the sense of \autoref{thm_currentExtVsCrossedExt} correspond to $\C_\epsilon[I]$-modules $V$ such that the two maps $A \otimes V\X \to V\X$ given by
{\allowdisplaybreaks
\begin{align*}
e_i \otimes \basis&\overset{\rho_M}{\longmapsto}e_i.\basis,\\[+2mm]
e_i \otimes \basis
&\overset{c_{A,M}}{\longmapsto} q(i)^{-1} \, \basis \otimes e_i\\
&\overset{c_{M,A}}{\longmapsto} q(i)^{-2} \, g_*e_i \otimes \basis\\
&\overset{\eqi_A}{\longmapsto} q(i)^{-2}\eta_g(i) \, e_{-i} \otimes \basis\\
&\overset{\rho_M}{\longmapsto} q(i)^{-2}\eta_g(i) \, e_{-i}.\basis
\end{align*}
}%
are equal for all $\basis \in V$. Since $Q(i) = q(i)^2 = 1$ is the twist on $A$, these correspond to modules where $e_{2i}$ acts by $\eta_g(i)\epsilon(i, -i)\epsilon(i, i)^{-1}$ for all $i \in I$.

For a group $I$ of odd order, this condition uniquely determines a $1$-dimensional representation of $I$. Since we assumed $\epsilon = \eta_g = 1$, this representation is the trivial one, with $V = \C^1$. Thus, we observe that as abelian categories, as asserted:
\begin{equation*}
\left(\cB[\Z_2, \eps]\right)_{(A, \eqi)}^\loc = \Vect_{\Gamma} \oplus \Vect.
\end{equation*}
\end{ex}


\section{\autoref{idea_2}: \texorpdfstring{$\Z_2$}{Z\_2}-Crossed Extensions for Even Groups}\label{sec_TYEven}

The approach from \autoref{sec_GExtensionVsCurrentExtension}, specifically \autoref{idea_2}, can be used to construct for $G=\langle g\rangle\cong\Z_2$ the braided $G$-crossed extension
\begin{equation*}
\Vect_\Gamma^Q[\Z_2,\eps]=\Vect_\Gamma\oplus\Vect_{\Gamma/2\Gamma}
\end{equation*}
of the (nondegenerate) braided fusion category $\smash{\Vect_\Gamma^Q}$ for a discriminant form $(\Gamma, Q)$ with a $\Z_2$-action given by $-\!\id$. Here, in contrast to \autoref{sec_TY}, the finite, abelian group $\Gamma$ may have \emph{even order}, leading to new examples of braided $\Z_2$-crossed tensor categories. By equivariantising, these also provide new examples of modular tensor categories.

The explicit definition of the braided $\Z_2$-crossed tensor categories $\smash{\Vect_\Gamma^Q[\Z_2,\eps]}$ is given in \cite{GLM24b}. This includes a description of the associators, $\Z_2$-braiding and $\Z_2$-ribbon structure. We only summarise these results here, but we explain in detail how \autoref{idea_2} produces the data that are used in \cite{GLM24b} to define these categories.


\subsection{Summary of Results}

Morally, we want to apply \autoref{idea_2} to the situation
\begin{equation*}
\begin{tikzcd}
\Vect_{\R^d}^\DiscQ
\arrow[rightsquigarrow]{d}{}
\arrow[hookrightarrow]{rrr}{\text{$\Z_2$-crossed ext.}}
&&&
\Vect_{\R^d}^\DiscQ \oplus \Vect
\arrow[rightsquigarrow]{d}{}
\\
\Vect_{\Gamma}^Q
\arrow[hookrightarrow]{rrr}{\text{$\Z_2$-crossed ext.}}
&&&
\Vect^{Q}_{\Gamma}[\Z_2,\eps]
\end{tikzcd}
\end{equation*}
where $(\Gamma,Q)$ corresponds to an isotropic subgroup (meaning an even lattice~$L$) in $(\DiscGamma,\DiscQ)=(\R^d,\DiscQ)$ as discussed in \autoref{sec_latticeDiscriminantForm}, in order to produce the braided $\Z_2$-crossed tensor category $\smash{\Vect_\Gamma^Q}[\Z_2,\eps]$ with all structures. However, for this we need an infinite version of a Tambara-Yamagami category associated with the infinite, abelian group $\R^d$ with automorphism $-\!\id$, which should be $\smash{\Vect_{\R^d}^\DiscQ}\oplus\Vect$ as abelian category, but for which we cannot define a tensor product in the usual sense.

Hence, we turn our approach around and use \autoref{idea_2} in a nonrigorous but very explicit way to determine in \autoref{sec_idea2_infTY} the data that \emph{should} define $\smash{\Vect_\Gamma^Q}[\Z_2,\eps]$. As several times in this article, we assume for simplicity that $\epsilon=1$ (cf.\ \autoref{sec_latticeDiscriminantForm}), which corresponds to the lattice $L$ being even in a stronger sense; see \autoref*{GLM2ass_strongeven} in \cite{GLM24b} and \autoref{sec:prel}. The general case is left for future work.

We then use this in \cite{GLM24b}, \autoref*{GLM2sec_evenTY}, as input for a rigorous (but without \autoref{sec_idea2_infTY} ad hoc seeming) definition of a braided $\Z_2$-crossed tensor category $\LM(\Gamma,\sigma,\omega,\delta,\eps\,|\,q,\alpha,\beta)$. There, we prove in detail that this is indeed a braided $\Z_2$-crossed extension of $\smash{\Vect_\Gamma^Q}$ and hence must coincide with $\smash{\Vect_\Gamma^Q}[\Z_2,\eps]$.

In \autoref*{GLM2sec_equiv} of \cite{GLM24b} we describe the $\Z_2$-equivariantisation $\smash{\Vect_\Gamma^Q}[\Z_2,\eps]\sslash\Z_2$, which is a modular tensor category. There, in particular, we compute the simple objects, fusion rules and modular data.

Moreover, in \autoref*{GLM2sec_latticedata} of \cite{GLM24b} we realise $(\Gamma,Q)$ explicitly as discriminant form $\Gamma=L^*\!/L$ of some even lattice $L$. We discuss how the data used to define the braided $\Z_2$-crossed tensor category $\LM(\Gamma,\sigma,\omega,\delta,\eps\,|\,q,\alpha,\beta)$ are obtained from lattice data. If $L$ is positive-definite, the braided $\Z_2$-crossed tensor category will in this way appear for the $\Z_2$-orbifold of the corresponding lattice \voa{}, which we shall discuss in \autoref{sec_VOAevenTV}.

\medskip

We briefly summarise the main result of \cite{GLM24b}, \autoref*{GLM2thm_evenTY}, the construction of the braided $\Z_2$-crossed tensor categories $\smash{\Vect_\Gamma^Q}[\Z_2,\eps]$ that generalise the Tambara-Yamagami case (cf.\ \autoref{thm_TYribbon}, \autoref{cor_TYpseudounitary} and \autoref{cor_TY}).
\begin{thm}\label{thm_evenTY_here}
Let $G=\langle g\rangle\cong\Z_2$. The data given in \autoref*{GLM2sec_evenTY} of \cite{GLM24b} define a braided $\Z_2$-crossed tensor category
\begin{equation*}
\LM(\Gamma,\sigma,\omega,\delta,\eps\,|\,q,\alpha,\beta)=\Vect_\Gamma\oplus\Vect_{\Gamma/2\Gamma},
\end{equation*}
which is a braided $\Z_2$-crossed extension of $\smash{\Vect_\Gamma^Q}$ for a discriminant form $(\Gamma,Q)$, with the categorical $\Z_2$-action being $g_*\C_a=\C_{-a}$ for $a\in\Gamma$ with tensor structure $g_*(\C_a\otimes\C_b)\to g_*\C_a\otimes g_*\C_b$ given by $\omega(a,b,-b)$ for $a,b\in\Gamma$. The ribbon twist yields positive quantum dimensions if and only if $\alpha\beta=\eps$.
\end{thm}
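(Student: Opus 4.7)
The plan is to combine three ingredients: first, the heuristic derivation of the data of $\LM(\Gamma,\sigma,\omega,\delta,\eps\,|\,q,\alpha,\beta)$ via \autoref{idea_2}, carried out explicitly in \autoref{sec_idea2_infTY}; second, the rigorous verification of the tensor category axioms (pentagon, hexagon, $\Z_2$-ribbon equation) carried out in \autoref*{GLM2sec_evenTY} of \cite{GLM24b}; and third, the uniqueness part of \autoref{thm_ourENO_braided}, which guarantees that once a consistent construction exists for each $\eps \in \{\pm 1\}$, the two resulting categories exhaust the $H^3(\Z_2,\C^\times)$-torsor of possible extensions.

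Concretely, I would realise $(\Gamma, Q)$ as the condensation of $(\R^d, \DiscQ)$ by a strongly even lattice $L \subset \R^d$ as in \autoref{sec_latticeDiscriminantForm}, and then apply the formalism of \autoref{sec_GExtensionVsCurrentExtension} to the braided $\Z_2$-crossed category $\Vect_{\R^d}^{\DiscQ} \oplus \Vect$ of ``infinite Tambara-Yamagami'' type with $\Z_2$ acting by $-\!\id$. Although the ambient categories are infinite and a rigorous tensor product in the twisted sector is not available, a fixed-point analysis analogous to \autoref{prop_TYtoTY} predicts that the twisted sector of the condensation is $\Vect_{\Gamma/2\Gamma}$, since $\Gamma/2\Gamma$ is the set of $(-\!\id)$-fixed points in $\Gamma$; the formulae for the associator on three twisted objects, the $\Z_2$-braiding between untwisted and twisted, and the $\Z_2$-ribbon twist are then obtained by the same coset-representative bookkeeping used in the proof of \autoref{prop_quadext}. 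These are precisely the data codified in \autoref*{GLM2sec_evenTY} of \cite{GLM24b}.

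Once the data are in place, the identity-component claim is immediate: it coincides with the finite condensation $\cB_A^\loc = \Vect_\Gamma^Q$ computed in \autoref{prop_quadext}, and the tensor structure on $g_*$ is the cocycle factor $\omega(a,b,-b)$ measuring the failure of the chosen abelian $3$-cocycle $(\sigma,\omega)$ to be $\Z_2$-invariant (which is automatic only when $|\Gamma|$ is odd, cf.\ \autoref{sec_TY}). The positive-quantum-dimension statement then follows by the same calculation as in \autoref{cor_TYpseudounitary}: for each twisted simple object $\X_\gamma$, $\gamma \in \Gamma/2\Gamma$, one computes $\dim(\X_\gamma) = \eps\,(\alpha\beta)\,|\Gamma/2\Gamma|^{1/2}\,\id_{\X_\gamma}$ from $\eval_{\X_\gamma} \circ c_{\X_\gamma, \X_\gamma^*} \circ (\theta_{\X_\gamma} \otimes \id) \circ \coeval_{\X_\gamma}$, and this matches the Frobenius-Perron dimension exactly when $\alpha\beta = \eps$.

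I expect the main obstacle to be the pentagon identity on four objects in the twisted sector, which reduces to a delicate identity between a Gauss-type sum over $\Gamma/2\Gamma$, the values of the square root $q$ of $Q$, and the associator $\omega$; when $|\Gamma|$ is even, this identity is genuinely nontrivial and is precisely what forces the introduction of the auxiliary $2$-cochain $\delta$ in the parametrisation $\LM(\Gamma,\sigma,\omega,\delta,\eps\,|\,q,\alpha,\beta)$. Verifying it directly from the formulas (rather than via the nonrigorous infinite condensation) is the core technical content that is therefore deferred to \cite{GLM24b}.
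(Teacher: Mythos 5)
Your proposal follows essentially the same route as the paper: the coherence data are produced by the (admittedly nonrigorous) infinite condensation of $\smash{\Vect_{\R^d}^{\DiscQ}}\oplus\Vect$ by $A=\C_\epsilon[L]$ carried out in \autoref{sec_idea2_infTY}, and the actual consistency verification is deferred wholesale to \autoref*{GLM2sec_evenTY} of \cite{GLM24b} --- which is exactly how the paper treats this theorem (it is stated as a summary of the companion paper's result, with no independent proof here). Two factual corrections, neither of which breaks your argument: the Frobenius--Perron dimension of a twisted simple object is $|2\Gamma|^{1/2}$, not $|\Gamma/2\Gamma|^{1/2}$ (there are $|\Gamma/2\Gamma|$ of them and their squared dimensions must sum to $|\Gamma|$; this is also why the normalisation $\eps\,|2\Gamma|^{-1/2}$ appears in the $\X\X\X$-associator), so the dimension computation should read $\dim(\X^\chi)=\eps\,(\alpha\beta)\,|2\Gamma|^{1/2}$ --- the criterion $\alpha\beta=\eps$ is unaffected since only the sign matters. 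Moreover, judging from the partial Gauss sum $G_\delta(\Gamma,q^{-1})=|2\Gamma|^{-1/2}\sum_{a\in\delta}q(a)^{-1}$ in \autoref{thm_latmain}, the parameter $\delta$ is a distinguished subset (coset) of $\Gamma$ indexing that sum, arising from the support condition $\DiscS(\hat{t},\cdot)=\chi\phi\Discq$ in the tensor product of two twisted objects, rather than an auxiliary $2$-cochain; and the twisted simples are labelled by $\widehat{L/2L}\cong\Gamma/2\Gamma$ via the condensation, which agrees only in cardinality with the $(-\id)$-fixed points $\{a\in\Gamma\mid 2a=0\}$ that your Verlinde-type count produces.
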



\subsection{Condensation of Infinite Tambara-Yamagami Category}\label{sec_idea2_infTY}

We now show how the ad-hoc definition in \cite{GLM24b}, \autoref*{GLM2sec_evenTY}, of the braided $\Z_2$-crossed tensor category $\LM(\Gamma,\sigma,\omega,\delta,\eps\,|\,q,\alpha,\beta)$ can be derived using the idea in \autoref{sec_ThmGExtensionVsCurrentExtension} (\autoref{idea_2}) and realising the discriminant form as $\Gamma=L^*\!/L$ for some even lattice $L$, which in turn can be embedded into its ambient space $L\otimes_\Z\R\cong\R^d$.
\begin{equation*}
\begin{tikzcd}
\Vect_{\R^d}^\DiscQ
\arrow[rightsquigarrow]{d}{}
\arrow[hookrightarrow]{rrr}{\text{$\Z_2$-crossed ext.}}
&&&
\Vect_{\R^d}^\DiscQ \oplus \Vect
\arrow[rightsquigarrow]{d}{}
\\
\Vect_{\Gamma}^Q
\arrow[hookrightarrow]{rrr}{\text{$\Z_2$-crossed ext.}}
&&&
\Vect^{Q}_{\Gamma}[\Z_2,\eps]
\end{tikzcd}
\end{equation*}
This derivation using an infinite category is not rigorous, but very explicit. It provides a motivation for the definition of $\smash{\Vect^Q_\Gamma}[\Z_2,\eps]=\LM(\Gamma,\sigma,\omega,\delta,\eps\,|\,q,\alpha,\beta)$ in \cite{GLM24b}.

\medskip

Recall from \autoref{sec_latticeDiscriminantForm} the infinite braided tensor category $\cB=\cC_1=\smash{\Vect_{\R^d}^{\DiscQ}}$ for a nondegenerate quadratic form $\DiscQ(v)=\e(\langle v,v\rangle/2)$ on $\R^d$, characterised by the abelian $3$-cocycle $(\DiscS,\DiscO)$ with $\DiscS(v,w)=\e(\langle v,w\rangle/2)$ for $v,w\in\R^d$ and $\DiscO=1$. Also following \autoref{sec_latticeDiscriminantForm}, we take an even lattice $L$ in $(\R^d,\DiscQ)$ and the corresponding commutative algebra $A=\C_{\epsilon}[L]$ in $\cB$ for some $\epsilon\colon L\times L\to\C^\times$ with $\d\epsilon=\DiscO=1$ (i.e.\ a $2$-cocycle) and $\epsilon(u,v)\epsilon(v,u)^{-1}=\DiscS(u,v)$ for $u,v\in L$. The corresponding condensation is the braided tensor category $\cB_A^{\loc}=\smash{\Vect_\Gamma^Q}$, with the concrete representing abelian $3$-cocycle $(\sigma,\omega)$. Denote again by $\hat{a}\in L^*$ a fixed choice of representatives for the $L$-cosets $a+L\in\Gamma$, and let $\smash{u(a,b)=\hat{a}+\hat{b}-\smash{\widehat{a+b}}}$ be the associated $2$-cocycle with values in~$L$.

We consider, as in \autoref{sec_DiscFormAut}, the action of $G=\langle g\rangle\cong\Z_2$ on $\R^d$ where the generator $g$ of $\Z_2$ acts by multiplication by $-1$. This gives rise to a strict action by strict tensor functors on $\cB$ because the action fixes the chosen abelian $3$-cocycle.

\medskip

We consider (not fully rigorously) the corresponding infinite Tambara-Yamagami category from \autoref{sec_TY}
\begin{equation*}
\text{``}\TY(\R^d,\DiscS,\bar{\eps}\,|\,\Discq,\bar{\alpha},\bar{\beta})\text{''},
\end{equation*}
which as an abelian category is of the form
\begin{align*}
\cC=\cC_1\oplus\cC_g=\Vect_{\R^d}^{\DiscQ}\oplus\Vect.
\end{align*}

\begin{prob}
Since $\Gamma=\R^d$ is infinite, the formulae of a Tambara-Yamagami category do not define a tensor category, in the usual sense: the tensor product $\X\otimes\X$ supposedly contains an infinite (continuous) direct sum of simple objects $\C_t$, and, even worse, the associator on $\X\otimes\X\otimes \X$ supposedly acts on infinite-dimensional multiplicity spaces, as some version of a Fourier transformation. It would be very interesting to rigorously define such a braided $G$-crossed tensor category, e.g., enriched over Hilbert spaces.
\end{prob}
Disregarding these problems, we now want to apply \autoref{idea_2} with \autoref{thm_currentExtVsCrossedExt} to construct the (finite) braided $\Z_2$-crossed tensor category $\LM(\Gamma,\sigma,\omega,\delta,\eps\,|\,q,\alpha,\beta)$ as the condensation of this category $\cC=\cC_1\oplus\cC_g$ by $A$.

As in \autoref{sec_DiscFormAut}, we note that the $\Z_2$-action preserves any lattice $L$. We assume that the lattice is strongly even in the sense of \autoref*{GLM2ass_strongeven} in \cite{GLM24b} (see \autoref{sec:prel}); so $\epsilon=1$ and we may choose the trivial $\Z_2$-equivariant structure $\eqi$ on $A$ by $\eta_g=1$.

We now compute the modules of the $\Z_2$-equivariant commutative algebra $A$ in the braided $\Z_2$-crossed tensor category $\cC=\cC_1\oplus\cC_g$. The computation starts formally identically to \autoref{sec_TYtoTY}, but now we explicitly compute associators and braidings.

Again, the modules of $A$ contained as objects in $\cC_1$, which is a tensor subcategory, simply recover $\cB_A$. The modules of $A$ contained as objects in $\cC_g$ are of the form $V\X$, where $V$ is the multiplicity space, and the action of $A$ corresponds to a linear representation of $\C_{\epsilon}[L]$ on $V$, again because the particular associator $\C_v\otimes \C_w\otimes \X$ in the Tambara-Yamagami category $\cC$ is trivial. The $G$-local modules with respect to the equivariant structure are again those, where
\begin{equation*}
e_i.\basis=\Discq(i)^{-1}\Discq(i)^{-1}\,e_{-i}.\basis
\end{equation*}
for all $i\in L$ and $\basis\in V$. Since we assumed $\epsilon=1$, the simple modules of $\C_{\epsilon}[L]$ are $1$-dimensional and given by characters $\chi\colon L\to \C^\times$, and they are local if and only if $\chi(i)=\chi(-i)$ since $\Discq(i)^2=\DiscQ(i)=1$. Hence, $G$-local modules are of the form $\X^\chi$, parametrised by characters $\chi\colon L/2L\to \C^\times$ and the overall category of $G$-local $A$-modules in $\cC$ is
\begin{equation*}
(\cC_1\oplus\cC_g)^\loc_A=\Vect_\Gamma\oplus\Vect_{\widehat{L/2L}}.
\end{equation*}

As discussed in detail in \cite{GLM24b}, \autoref*{GLM2sec_latticedata}, we may identify $\widehat{L/2L}\cong\Gamma/2\Gamma$ in the present setting.


\subsection*{Tensor Products and Identifications}

We now compute the tensor products $\otimes_A$ and explicit isomorphisms to the $A$-modules in the explicit form above.


\subsubsection*{Tensor product $\C_{\hat{a}+L}\otimes_A \C_{\hat{b}+L}\cong \C_{\widehat{a+b}+L}$:}

We repeat this computation from \autoref{sec_DiscForm} with $\epsilon=1$ and in the present language:
\begin{equation*}
\begin{tikzcd}[column sep=small]
\Bigl(\bigl(\bigoplus_{i\in L} \C_{\hat{a}+i}\bigr) \otimes \C_x\Bigr)\otimes\bigl(\bigoplus_{j\in L} \C_{\hat{b}+j}\bigr)
\ar{d}{\DiscS(\hat{a}+i,x)}
\ar{r}{\DiscO=1}
&
\bigl(\bigoplus_{i\in L} \C_{\hat{a}+i}\bigr) \otimes \Bigl(\C_x\otimes\bigl(\bigoplus_{j\in L}\C_{\hat{b}+j}\bigr)\Bigr)
\ar{d}{\rho}
\\
\Bigl(\C_x\otimes\bigl(\bigoplus_{a\in I} \C_{\hat{a}+i}\bigr)\Bigr)\otimes\bigl(\bigoplus_{j\in L} \C_{\hat{b}+j}\bigr)
\ar{r}{\rho}
&
\bigl(\bigoplus_{i\in L} \C_{\hat{a}+i}\bigr)\otimes \bigl(\bigoplus_{j\in L} \C_{\hat{b}+j}\bigr)
\end{tikzcd}
\end{equation*}
for $a,b\in \Gamma$ and for all $x\in L$, and where $\rho$ is the action of $A$ on the respective $A$-module. The equivalence relation on the coequaliser from $(e_{\hat{a}+i}\otimes e_x)\otimes e_{\hat{b}+j}$ is
\begin{equation*}
(e_{\hat{a}+i}\otimes e_{x+\hat{b}+j})
\;\sim\;
\DiscS(\hat{a}+i,x)(e_{\hat{a}+i+x}\otimes e_{\hat{b}+j}).
\end{equation*}
We identify the above coequaliser with the intended result
\begin{equation*}
\iota_{\C_{\hat{a}+L},\C_{\hat{b}+L}}\colon
\bigoplus_{i,j\in L}\C_{\hat{a}+i}\otimes \C_{\hat{b}+j}/\!\sim\;\,\overset{\sim}{\longrightarrow}\;\bigoplus_{k\in L}\C_{\widehat{a+b}+k}
\end{equation*}
by choosing as preimage of $e_{\widehat{a+b}+k}$ the $\,\sim\,$-representative $e_{\hat{a}+k-u(a,b)}\otimes e_{\hat{b}}$. This is also an $A$-module isomorphism as the action on the first factor is $e_x\otimes e_{\hat{a}+i}\to e_{\hat{a}+i+x}$ and the associator in $\cC$ is trivial. On an arbitrary element, it follows from the previous formula that the identification with $\C_{\widehat{a+b}+i}$ maps for all $a,b\in\Gamma$
\begin{equation*}
\boxed{
\iota_{\C_{\hat{a}+L},\C_{\hat{b}+L}}\colon
(e_{\hat{a}+i}\otimes e_{\hat{b}+j})\;\sim\;(e_{\hat{a}+i+j}\otimes e_{\hat{b}})\mapsto\DiscS(\hat{a}+i,j)e_{\widehat{a+b}+u(a,b)+i+j}.
}
\end{equation*}


\subsubsection*{Tensor product $\C_{\hat{a}+L}\otimes_A \X^\chi\cong \X^{\DiscS(\hat{a},\cdot)\chi}$:}

We consider
\begin{equation*}
\begin{tikzcd}
\bigl((\bigoplus_{i\in L} \C_{\hat{a}+i}) \otimes \C_x\bigr)\otimes \X^\chi
\ar{d}{\DiscS(\hat{a}+i,x)}
\ar{rr}{\id}
&&
(\bigoplus_{i\in L} \C_{\hat{a}+i}) \otimes(\C_x\otimes \X^\chi)
\ar{d}{\rho}
\\
\bigl(\C_x\otimes (\bigoplus_{a\in I} \C_{\hat{a}+i})\bigr)\otimes \X^\chi
\ar{rr}{\rho}
&&
(\bigoplus_{i\in L} \C_{\hat{a}+i})\otimes \X^\chi
\end{tikzcd}
\end{equation*}
for all $a\in\Gamma$, $\chi\in\widehat{L/2L}$ and $x\in L$. The equivalence relation on the coequaliser from $(e_{\hat{a}+i}\otimes e_x)\otimes \basis$ for $\basis$ a chosen basis vector in the $1$-dimensional multiplicity space of $\X^\chi$ is
\begin{equation*}
(e_{\hat{a}+i}\otimes \chi(x)\basis)
\;\sim\;
\DiscS(\hat{a}+i,x)(e_{\hat{a}+i+x}\otimes \basis).
\end{equation*}
In particular, for $x=-i$
\begin{equation*}
\boxed{
\iota_{\C_{\hat{a}+L},\X^\chi}\colon
(e_{\hat{a}+i}\otimes \basis)
\;\sim\;
\DiscS(\hat{a}+i,-i)\chi(i)(e_{\hat{a}}\otimes \basis)
\mapsto
\DiscS(\hat{a},i)^{-1}\chi(i) \, \basis,
}
\end{equation*}
noting that $\DiscS(i,-i)=1$ for $i\in L $, gives an identification with $\X^{\DiscS(\hat{a},\cdot)^{-1}\chi}$ since the action is
\begin{equation*}
e_i\otimes (e_{\hat{a}}\otimes \basis)
\mapsto
(e_i\otimes e_{\hat{a}})\otimes \basis
\mapsto
(e_{\hat{a}+i}\otimes \basis)
\sim \DiscS(\hat{a},i)^{-1}\chi(i)\, \basis.
\end{equation*}


\subsubsection*{Tensor product $\X^\chi\otimes_A\C_{\hat{a}+L}\cong\X^{\DiscS(\hat{a},\cdot)\chi}$:}

We consider
\begin{equation*}
\begin{tikzcd}
(\X^\chi \otimes \C_x)\otimes (\bigoplus_{i\in L} \C_{\hat{a}+i})
\ar{d}{\Discq(x)^{-1}}
\ar{rr}{\id}
&&
\X^\chi \otimes (\C_x\otimes \bigoplus_{i\in L} \C_{\hat{a}+i})
\ar{d}{\rho}
\\
(\C_x\otimes \X^\chi)\otimes (\bigoplus_{i\in L} \C_{\hat{a}+i})
\ar{rr}{\rho}
&&
X^\chi \otimes (\bigoplus_{i\in L} \C_{\hat{a}+i})
\end{tikzcd}
\end{equation*}
for all $a\in\Gamma$, $\chi\in\widehat{L/2L}$ and $x\in L$. The equivalence relation on the coequaliser from $(\basis\otimes e_x)\otimes e_{\hat{a}+i}$ for $\basis$ a basis vector in the $1$-dimensional multiplicity space of $\X^\chi$ is
\begin{equation*}
(\basis\otimes e_{\hat{a}+i+x})
\;\sim\;
\Discq(x)^{-1}(\chi(x)\basis\otimes e_{\hat{a}+i} ).
\end{equation*}
In particular, for $x=-i$ the formula
\begin{equation*}
\boxed{
\iota_{\X^\chi,\C_{\hat{a}+L}}\colon
(\basis\otimes e_{\hat{a}+i})
\;\sim\;
\Discq(i)\chi(i)(\basis\otimes e_{\hat{a}})
\mapsto \Discq(i)\chi(i) \basis
}
\end{equation*}
gives an identification with $\X^{\DiscS(\hat{a},\cdot)^{-1}\chi}$ since the action with the nontrivial associator of type $\C\X\C$ in $\cC$ is
\begin{equation*}
e_i\otimes (\basis\otimes e_{\hat{a}})
\mapsto
\DiscS(i,\hat{a})^{-1}
(e_i\otimes \basis)\otimes e_{\hat{a}}
\mapsto
\DiscS(i,\hat{a})^{-1}
\chi(i)(\basis\otimes e_{\hat{a}}).
\end{equation*}


\subsubsection*{Tensor product $\X^\chi\otimes_A\X^\phi\cong\bigoplus_{t,\;\DiscS(t,\cdot)=\chi\phi \Discq}\C_{t+L}$:}

We consider
\begin{equation*}
\begin{tikzcd}
(\X^\chi \otimes \C_x)\otimes \X^\phi
\ar{d}{\Discq(x)^{-1}}
\ar{rr}{\DiscS(x,t)}
&&
\X^\chi \otimes (\C_x\otimes \X^\phi)
\ar{d}{\rho}
\\
(\C_{-x}\otimes \X^\chi)\otimes \X^\phi
\ar{rr}{\rho}
&&
X^\chi \otimes \X^\phi
\end{tikzcd}
\end{equation*}
for all $\chi,\phi\in\widehat{L/2L}$ and $x\in L$, where $\X\otimes\X=\bigoplus_{t\in\R^d}\C_t$. The equivalence relation on the coequaliser from $(\basis\otimes e_x)\otimes \basisTwo$, for $\basis,\basisTwo$ basis vectors in the $1$-dimensional multiplicity spaces of $\X^\chi$ and $\X^\phi$, respectively, is
\begin{equation*}
\DiscS(x,t)(\basis\otimes \phi(x)\basisTwo)
\;\sim\;
\Discq(-x)\chi(-x)\basis\otimes \basisTwo.
\end{equation*}
Then the $\C_t$-term in $\X^\chi\otimes_A\X^\phi$ is nonzero for those $t\in \R^d$ with
\begin{equation*}
\DiscS(x,t)=\Discq(x)\chi(x)\phi(x)^{-1}.
\end{equation*}
In particular, $t\in L^*$ because the right-hand side vanishes for $x\in L$. Regarding the $A$-action, the associator in $\cC$ on $\C_a\otimes\X\otimes \X$ is trivial, and the action of $e_i\in A$ on $\C_t$ is
\begin{equation*}
e_i\otimes e_{\hat{t}+j}\mapsto \chi(i)e_{i+\hat{t}+j}
\end{equation*}
for $i,j\in L$. This is not the standard $A$-module structure on $\C_{\hat{t}+L}$; so we have to use a nontrivial identification
\begin{equation*}
\boxed{
\iota_{\X^{\chi},\X^{\phi}}\colon
e_{\hat{t}+k} \mapsto \chi(k)e_{\hat{t}+k}.}
\end{equation*}


\subsection*{Associators}

From these tensor products and identifications we can now compute the associators. A good additional check is that the respective morphism must be independent of the choice of representative (i.e.\ $i,j,k\in L$ below).


\subsubsection*{Associator $\C_{\hat{a}+L}\otimes_A \C_{\hat{b}+L} \otimes_A \C_{\hat{c}+L}$:}

We consider
\begin{equation*}
\begin{tikzcd}
(\C_{\hat{a}+i}\otimes \C_{\hat{b}+j}) \otimes \C_{\hat{c}+k}
\ar{d}{\sim}
\ar{r}{\id}
& \C_{\hat{a}+i}\otimes (\C_{\hat{b}+j} \otimes \C_{\hat{c}+k})
\ar{d}{\sim}
\\
(\C_{\hat{a}+i}\otimes_A \C_{\hat{b}+j}) \otimes_A \C_{\hat{c}+k}
\ar{d}{\DiscS(\hat{a}+i,j)}
& \C_{\hat{a}+i}\otimes_A (\C_{\hat{b}+j} \otimes_A \C_{\hat{c}+k})
\ar{d}{\DiscS(\hat{b}+j,k)}\\
\C_{\widehat{a+b}+i+j+u(a,b)} \otimes_A \C_{\hat{c}+k}
\ar{d}{\DiscS(\widehat{a+b}+i+j+u(a,b),k)}
& \C_{\hat{a}+k}\otimes_A \C_{\widehat{b+c}+j+k+u(b,c)}
\ar{d}{\DiscS(\hat{a}+i,j+k+u(b,c))}\\
\C_{\widehat{a+b+c}+i+j+k+u(a,b)+u(a+b,c)}
\arrow[dashed,r]{}{}
&\C_{\widehat{a+b+c}+i+j+k+u(a,b+c)+u(b,c)}
\end{tikzcd}
\end{equation*}
for chosen $a,b,c\in\Gamma$ and for all $i,j,k\in L$ corresponding to all direct summands of the tensor product. Recall that because $\epsilon=1$, it follows that $\DiscS(\cdot,u)$ is a homomorphism $\Gamma/2\Gamma\to \C^\times$ for $u\in L$, i.e.\ exponents $\pm 1$ are irrelevant and hence $\DiscS(i,j)=1$. Altogether, we get for this associator
\begin{align*}
&\DiscS(\widehat{a+b}+i+j+u(a,b),k)^{-1}
\DiscS(\hat{a}+i,j)^{-1}
\DiscS(\hat{b}+j,k)
\DiscS(\hat{a}+i,j+k+u(b,c))\\
&=\DiscS(\hat{a},k)^{-1}\DiscS(\hat{b},k)^{-1}\DiscS(\hat{a},j)^{-1}\DiscS(\hat{b},k)
\DiscS(\hat{a},j)\DiscS(\hat{a},k)\DiscS(\hat{a},u(b,c)),
\end{align*}
which simplifies to
\begin{equation*}
\boxed{
\omega(a,b,c)\coloneqq\DiscS(\hat{a},u(b,c)).
}
\end{equation*}
Of course this computation on $\cC_1$ coincides with the respective computation of the condensation of the discriminant form in \autoref{sec_DiscForm} for $\epsilon=1$.


\subsubsection*{Associator $\X^\chi\otimes_A \C_{\hat{a}+L} \otimes_A \C_{\hat{b}+L}$:}

We consider
\begin{equation*}
\begin{tikzcd}
(\X^\chi\otimes \C_{\hat{a}+i}) \otimes \C_{\hat{b}+j}
\ar{d}{\sim}
\ar{r}{\id}
& \X^\chi\otimes (\C_{\hat{a}+i} \otimes \C_{\hat{b}+j})
\ar{d}{\sim}
\\
(\X^{\chi} \otimes_A \C_{\hat{a}+i}) \otimes_A \C_{\hat{b}+j}
\ar{d}{\Discq(i)\chi(i)}
& \X^\chi\otimes_A (\C_{\hat{b}+i} \otimes_A \C_{\hat{b}+j})
\ar{d}{\DiscS(\hat{a}+i,j)}\\
\X^{\chi \DiscS(\hat{a},\cdot)} \otimes_A \C_{\hat{b}+j}
\ar{d}{\Discq(j)\chi(j)\DiscS(\hat{a},j)}
& \X^\chi \otimes_A \C_{\widehat{a+b}+i+j+u(a,b)}
\ar{d}{\Discq(i+j+u(a,b))\chi(i+j+u(a,b))}\\
\X^{\chi \DiscS(\hat{a}+\hat{b},\cdot)}
\arrow[dashed,r]{}{}
& \X^{\chi \DiscS(\hat{a}+\hat{b},\cdot)}
\end{tikzcd}
\end{equation*}
for $a,b\in\Gamma$, for all $i,j\in L$ and $\chi\in\widehat{L/2L}$. Recall that $\Discq$ on $L$ restricts to a group homomorphism $L/2L\to \C^\times$. Altogether, we obtain for this associator:
\begin{equation*}
\boxed{
\chi(u(a,b))\Discq(u(a,b)).
}
\end{equation*}


\subsubsection*{Associator $\C_{\hat{a}+L} \otimes_A \X^\chi \otimes_A \C_{\hat{b}+L}$:}

We then consider
\begin{equation*}
\begin{tikzcd}
(\C_{\hat{a}+i}\otimes \X^\chi) \otimes \C_{\hat{b}+j}
\ar{d}{\sim}
\ar{r}{\DiscS(\hat{a}+i,\hat{b}+j)}
&[2em] \C_{\hat{a}+i}\otimes (\X^\chi \otimes \C_{\hat{b}+j})
\ar{d}{\sim}
\\
(\C_{\hat{a}+i}\otimes_A \X^\chi) \otimes_A \C_{\hat{b}+j}
\ar{d}{\DiscS(\hat{a},i)^{-1}\chi(i)}
& \C_{\hat{a}+i}\otimes_A (\X^\chi \otimes_A \C_{\hat{b}+j})
\ar{d}{\Discq(j)\chi(j)}\\
\X^{\chi \DiscS(\hat{a},\cdot)} \otimes_A \C_{\hat{b}+j}
\ar{d}{\Discq(j)\chi(j)\DiscS(\hat{a},j)}
& \C_{\hat{a}+i} \otimes_A \X^{\chi \DiscS(\hat{b},\cdot)}
\ar{d}{\DiscS(\hat{a},i)^{-1}\chi(i)\DiscS(\hat{b},i)}\\
\X^{\chi \DiscS(\hat{a}+\hat{b},\cdot)}
\arrow[dashed,r]{}{}
& \X^{\chi \DiscS(\hat{a}+\hat{b},\cdot)}
\end{tikzcd}
\end{equation*}
for $a,b\in\Gamma$, $i,j\in L$ and $\chi\in\widehat{L/2L}$. Altogether, we get for this associator the same as in $\cC=\smash{\Vect_{\R^d}^{\DiscQ}}\oplus\Vect$ (for the chosen representatives), namely:
\begin{equation*}
\boxed{
\DiscS(\hat{a},\hat{b}).
}
\end{equation*}


\subsubsection*{Associator $\C_{\hat{a}+L} \otimes_A \C_{\hat{b}+L} \otimes_A \X^\chi$:}

We consider
\begin{equation*}
\begin{tikzcd}
(\C_{\hat{a}+i} \otimes \C_{\hat{b}+j}) \otimes \X^\chi
\ar{d}{\sim}
\ar{r}{\id}
&[2em] \C_{\hat{a}+i} \otimes (\C_{\hat{b}+j} \otimes \X^\chi)
\ar{d}{\sim}
\\
(\C_{\hat{a}+i} \otimes_A \C_{\hat{b}+j}) \otimes_A \X^\chi
\ar{d}{\DiscS(\hat{a}+i,j)}
&\C_{\hat{a}+i} \otimes_A (\C_{\hat{b}+j} \otimes_A \X^\chi)
\ar{d}{\DiscS(\hat{b},j)\chi(j)}\\
\C_{\widehat{a+b}+i+j-u(a,b)} \otimes_A \X^\chi
\ar{d}{\DiscS(\widehat{a+b},i+j-u(a,b))^{-1}\chi(i+j-u(a,b))}
& \C_{\hat{a}+i} \otimes_A \X^{\chi \DiscS(\hat{b},\cdot)}
\ar{d}{\DiscS(\hat{a},i)\,\chi(i)\DiscS(\hat{b},i)}\\
\X^{\chi \DiscS(\hat{a}+\hat{b},\cdot)}
\arrow[dashed,r]{}{}
& \X^{\chi \DiscS(\hat{a}+\hat{b},\cdot)}
\end{tikzcd}
\end{equation*}
for $a,b\in\Gamma$, for all$i,j\in L$ and $\chi\in\widehat{L/2L}$. Note that $\DiscS(\widehat{a+b},\cdot)=\DiscS(\hat{a}+\hat{b},\cdot)$ on $L$. Altogether, we obtain for this associator:
\begin{equation*}
\boxed{
\chi(u(a,b))\DiscS(\hat{a}+\hat{b},u(a,b)).
}
\end{equation*}


\subsubsection*{Associator $\X^\chi\otimes_A \X^\phi \otimes_A \C_{\hat{a}+L}$:}

We consider
\begin{equation*}
\begin{tikzcd}
&[-17pt]
(\X^\chi \otimes \X^\phi) \otimes \C_{\hat{a}+i}
\ar{d}{\sim}
\ar{r}{\id}
& \X^\chi \otimes (\X^\phi \otimes \C_{\hat{a}+i})
\ar{d}{\sim}
\\
&
(\X^\chi \otimes_A \X^\phi) \otimes_A \C_{\hat{a}+i}
\ar{d}{\chi(k)}
& \X^\chi \otimes_A (\X^\phi \otimes_A \C_{\hat{a}+i})
\ar{d}{\Discq(i)\phi(i)}\\
\;\;\,\bigoplus_{\hat{t}+k,\,\DiscS(\hat{t},\cdot)=\chi\phi \Discq}
\hspace{-1cm}
&
\C_{\hat{t}+k} \otimes_A \C_{\hat{a}+i}
\ar{d}{\DiscS(\hat{t}+k,i)}
& \X^\chi \otimes_A \X^{\phi \DiscS(\hat{a},\cdot)}
\ar{d}{\chi(k+i-u(t,a))}\\
\bigoplus_{\hat{t}+k,\,\DiscS(\hat{t},\cdot)=\chi\phi \Discq}
\hspace{-1cm}
&
\C_{\widehat{t+a}+k+i-u(t,a)}
\arrow[dashed,r]{}{}
& \C_{\widehat{t+a}+k+i-u(t,a)}
\end{tikzcd}
\end{equation*}
for $a\in\Gamma$, for all $i\in L$ and $\chi,\phi\in\widehat{L/2L}$, and in the final steps summing over all cosets $t$ and all elements $\hat{t}+k$ therein such that $\DiscS(\hat{t}+k,-)=\DiscS(\hat{t},-)$ coincides with $\chi\phi \Discq$ as a character on $L$. The right-hand side tensor product $\X^\chi \otimes_A \X^{\phi \DiscS(\hat{a},\cdot)}$ evaluates to some object indexed by $\hat{t}'+k'$ with $\DiscS(\hat{t}',\cdot)=\chi\phi \DiscS(\hat{a},\cdot)$, which for a morphism in $\cC$ has to match $\widehat{t+a}+k+i-u(t,a)$. Altogether, we get for the associator:
\begin{equation*}
\boxed{\chi(u(t,a)).}
\end{equation*}


\subsubsection*{Associator $\X^\chi\otimes_A \C_{\hat{a}+L} \otimes_A \X^\phi$:}

Then, we consider
\begin{equation*}
\begin{tikzcd}
&[-37pt]
(\X^\chi\otimes \C_{\hat{a}+i}) \otimes \X^\phi
\ar{d}{\sim}
\ar{r}{\DiscS(\hat{t}+k,\hat{a}+i)}
&[2em] \X^\chi\otimes (\C_{\hat{a}+i} \otimes \X^\phi)
\ar{d}{\sim}
\\
&
(\X^\chi\otimes_A \C_{\hat{a}+i}) \otimes_A \X^\phi
\ar{d}{\Discq(i)\chi(i)}
& \X^\chi\otimes_A (\C_{\hat{a}+i} \otimes_A \X^\phi)
\ar{d}{\DiscS(\hat{a},i)\phi(i)}\\
&
\X^{\chi\DiscS(\hat{a},\cdot)} \otimes_A \X^\phi
\ar{d}{\chi(k)\DiscS(\hat{a},k)}
& \X^\chi \otimes_A \X^{\phi\DiscS(\hat{a},\cdot)}
\ar{d}{\chi(k)}\\
\bigoplus_{\hat{t}+k,\,\DiscS(\hat{t},\cdot)=\chi\DiscS(\hat{a},\cdot)\phi \Discq}
\hspace{-1cm}
&
\C_{\hat{t}+k}
\arrow[dashed,r]{}{}
& \C_{\hat{t}+k}
\end{tikzcd}
\end{equation*}
for $a\in\Gamma$, for all $i\in L$ and $\chi,\phi\in\widehat{L/2L}$. The condition on $\hat{t}$ allows us to rewrite $\DiscS(\hat{t},i)=\chi(i)\phi(i)\Discq(i)$. Then, altogether we get for the associator:
\begin{equation*}
\boxed{
\DiscS(\hat{t},\hat{a}).
}
\end{equation*}


\subsubsection*{Associator $\C_{\hat{a}+L} \otimes_A \X^\chi\otimes_A \X^\phi $:}

We consider
\begin{equation*}
\begin{tikzcd}
&[-13pt]
(\C_{\hat{a}+i} \otimes \X^\chi)\otimes \X^\phi
\ar{d}{\sim}
\ar{r}{\id}
& \C_{\hat{a}+i} \otimes (\X^\chi\otimes \X^\phi)
\ar{d}{\sim}
\\
&
(\C_{\hat{a}+i} \otimes_A \X^\chi)\otimes_A \X^\phi
\ar{d}{\DiscS(\hat{a},i)\chi(i)}
& \C_{\hat{a}+i} \otimes_A (\X^\chi\otimes_A \X^\phi)
\ar{d}{\chi(k)}\\
\bigoplus_{\hat{t}+k,\,\DiscS(\hat{t},\cdot)=\chi\DiscS(\hat{a},\cdot)\phi \Discq}
\hspace{-1cm}
&
\X^{\chi\DiscS(\hat{a},\cdot)} \otimes_A \X^\phi
\ar{d}{\chi(i+k-u(a,t))\DiscS(\hat{a},i+k-u(a,t))}
& \C_{\hat{a}+i} \otimes_A \C_{\hat{t}+k}
\ar{d}{\DiscS(\hat{a}+i,k)}\\
\bigoplus_{\hat{t}+k,\,\DiscS(\hat{t},\cdot)=\chi\DiscS(\hat{a},\cdot)\phi \Discq}
\hspace{-1cm}
&
\C_{\widehat{a+t}+i+k-u(a,t)}
\arrow[dashed,r]{}{}
& \C_{\widehat{a+t}+i+k-u(a,t)}
\end{tikzcd}
\end{equation*}
for $a\in\Gamma$, for all $i\in L$ and $\chi,\phi\in\widehat{L/2L}$. Altogether, we obtain the associator:
\begin{equation*}
\boxed{
\chi(u(a,t))\DiscS(a,u(a,t)).
}
\end{equation*}


\subsubsection*{Associator $\X^\chi \otimes_A \X^\phi\otimes_A \X^\psi $:}

Finally, we consider
\begin{equation*}
\begin{tikzcd}
&[-30pt]
(\X^\chi \otimes \X^\phi)\otimes \X^\psi
\ar{d}{\sim}
\ar{r}{\substack{ \eps N^{-1/2} \\ \DiscS(\hat{t}+k,\hat{r}+l)^{-1}}}
&[3.5em] \X^\chi \otimes (\X^\phi\otimes \X^\psi)
\ar{d}{\sim}
\\
&
(\X^\chi \otimes_A \X^\phi)\otimes_A \X^\psi
\ar{d}{\chi(k)}
& \X^\chi \otimes_A (\X^\phi\otimes_A \X^\psi)
\ar{d}{\phi(l)}\\
\bigoplus_{\substack{
\hat{t}+k,\,\DiscS(\hat{t},\cdot)=\chi\phi \Discq \\
\hat{r}+l,\,\DiscS(\hat{r},\cdot)=\phi\psi \Discq }}
\hspace{-1cm}
&
\C_{\hat{t}+k} \otimes_A \X^\psi
\ar{d}{\DiscS(\hat{t},k)\psi(k)}
& \X^\chi \otimes_A \C_{\hat{r}+l}
\ar{d}{\Discq(l)\chi(l)}\\
\qquad\;\;\bigoplus_{\substack{
\hat{t}+k,\,\DiscS(\hat{t},\cdot)=\chi\phi \Discq \\
\hat{r}+l,\,\DiscS(\hat{r},\cdot)=\phi\psi \Discq }}
\hspace{-1cm}
&
\X^{\chi\phi\psi \Discq}
\arrow[dashed,r]{}{}
& \X^{\chi\phi\psi \Discq}
\end{tikzcd}
\end{equation*}
for $\chi,\phi,\psi\in\widehat{L/2L}$. The final character is $\DiscS(\hat{t},\cdot)\psi=\chi\phi\psi \Discq=\chi\DiscS(\hat{r},\cdot)$, and there are $l$- and $k$-dependent terms $\Discq(l)\phi(l)\chi(l)\DiscS(\hat{t},l)=1$ and $\chi(k)\psi(k)\DiscS(\hat{t},k)\DiscS(\hat{r},l)=1$, respectively, which cancel.

At this point, we re-encounter the nonrigorous aspect of our constructive approach in the present setting, namely the normalisation factor $|\Gamma|=|\R|=\infty$. On the other hand, there is a (constant) sum over $k$ and $l$ in $L$, which is again infinite. We simply propose at this point one that should take the altogether normalisation factor $|2\Gamma|$, so that this associator is given by
\begin{equation*}
\boxed{
\eps |2\Gamma|^{-1/2}\DiscS(\hat{t},\hat{r})^{-1}.
}
\end{equation*}
In \cite{GLM24b}, \autoref{GLM2sec_evenTY}, we take this as a definition and verify that it defines a consistent braided $\Z_2$-crossed extension.


\subsection*{Tensor Structure}

We now come to the tensor structures:
\begin{equation*}
\begin{tikzcd}
g_*M\otimes g_*N /\!\sim
\ar{r}{\iota_{g_*M,g_*N}}
\ar{d}{\tau_{M,N}^g=\id}
&
g_*M\otimes_A g_*N
 \arrow[d, dashrightarrow,"\tau^{g_A}_{M,N}"]
 \\
 g_*(M\otimes N) /\!\sim
\ar{r}{g_*\iota_{M,N}}
&
g_*(M\otimes_A N)
\end{tikzcd}
\end{equation*}
They arise from applying $g_*$ to the identifications $\iota_{M,N}$ above, which does in general not agree with the identification $\iota_{g_*M,g_*N}$. Note that we have assumed a strict action $\tau^g=\id$ on the base category $\cC_1$.

The first result on $\cC_1$ agrees of course with the action of the group on a condensation in \autoref{sec_DiscFormAut}.
{\allowdisplaybreaks
\begin{align*}
\tau^{g_A}_{\hat{a},\hat{b}}
&=\DiscS(\widehat{-a}-u(a,-a)-i,-u(b,-b)-j)^{-1}
\cdot\DiscS(\hat{a}+i,j) \\
&=\DiscS(a,u(b,-b))^{-1},\\[+2mm]
\tau^{g_A}_{\chi,\hat{a}}
&=(\Discq(-u(a,-a)-i)\chi(-u(a,-a)-i))^{-1}
\cdot (\Discq(i)\chi(i)\\
&=\Discq(u(a,-a))\chi(u(a,-a))),\\[+2mm]
\tau^{g_A}_{\hat{a},\chi}
&=(\DiscS(a,-u(a,-a)-i)^{-1}\chi(-u(a,-a)-i))^{-1}
\cdot(\DiscS(a,i)^{-1}\chi(i))\\
&=\DiscS(a,u(a,-a))\chi(u(a,-a)),\\[+2mm]
\tau^{g_A}_{\chi,\phi}(t)
&=\chi(-u(t,-t)-k)^{-1}\cdot \chi(k)\\
&=\chi(u(t,-t))
\end{align*}
}%
for all $a,b\in \Gamma$, $i,j\in L$ and $\chi,\phi\in\widehat{L/2L}$.


\subsection*{Braiding}

Then, we study the braiding:


\subsubsection*{Braiding $\C_{\hat{a}+L}\otimes_A \C_{\hat{b}+L}$:}

We consider
\begin{equation*}
\begin{tikzcd}
\C_{\hat{a}+i}\otimes \C_{\hat{b}+j}
\ar{r}{\DiscS(\hat{a}+i,\hat{b}+j)}
\ar{d}{\sim}
&[2em]
\C_{\hat{b}+L}\otimes \C_{\hat{a}+L}
\ar{d}{\sim}
\\
\C_{\hat{a}+L}\otimes_A \C_{\hat{b}+L}
\ar{d}{\DiscS(\hat{a}+i,j)}
&
\C_{\hat{b}+j}\otimes_A \C_{\hat{a}+i}
\ar{d}{\DiscS(\hat{b}+j,i)}\\
\C_{\widehat{a+b}+i+j-u(a,b)}
\arrow[dashed,r]{}{}
&\C_{\widehat{a+b}+i+j-u(a,b)}
\end{tikzcd}
\end{equation*}
for $a,b\in\Gamma$ and $i,j\in L$. Altogether, the braiding is simply the Tambara-Yamagami braiding on the representatives:
\begin{equation*}
\boxed{
\sigma(a,b)\coloneqq\DiscS(\hat{a},\hat{b}).
}
\end{equation*}


\subsubsection*{Braiding $\C_{\hat{a}+L}\otimes_A \X^\chi$:}

We consider
\begin{equation*}
\begin{tikzcd}
\C_{\hat{a}+i}\otimes \X^\chi
\ar{r}{\Discq(\hat{a}+i)^{-1}}
\ar{d}{\sim}
&[1em]
\X^\chi\otimes \C_{\hat{a}+i}
\ar{d}{\sim}
\\
\C_{\hat{a}+L}\otimes_A \X^\chi
\ar{d}{\DiscS(\hat{a},i)^{-1}\chi(i)}
&
\X^\chi\otimes_A \C_{\hat{a}+L}
\ar{d}{\Discq(i)\chi(i)}\\
\X^{\chi\DiscS(a,\cdot)}
\arrow[dashed,r]{}{}
&\X^{\chi\DiscS(a,\cdot)}
\end{tikzcd}
\end{equation*}
for $a\in\Gamma$, $i\in L$ and $\chi\in\widehat{L/2L}$. We can use equation~(\ref*{GLM2formula_halfQadditivity}) in \cite{GLM24b} to express $\Discq(\hat{a}+i)=\Discq(\hat{a})\Discq(i)\DiscS(\hat{a},i)$. Then altogether, the braiding is simply the Tambara-Yamagami braiding on the representative:
\begin{equation*}
\boxed{
\Discq(\hat{a})^{-1}.
}
\end{equation*}


\subsubsection*{Braiding $\X^\chi\otimes_A \C_{\hat{a}+L}$:}

We then consider
\begin{equation*}
\begin{tikzcd}
\X^\chi\otimes \C_{\hat{a}+i}
\ar{r}{\Discq(\hat{a}+i)^{-1}}
\ar{d}{\sim}
&[1.5em]
\C_{\widehat{-a}-u(a,-a)-i}\otimes \X^\chi
\ar{d}{\sim}
\\
\C_{\hat{a}+L}\otimes_A \X^\chi
\ar{d}{\Discq(i)\chi(i)}
&
\C_{\hat{a}+L}\otimes_A \X^\chi
\ar{d}{\DiscS(\widehat{-a},-u(a,-a)-i)^{-1}\chi(-u(a,-a)-i)}
\\
\X^{\chi\DiscS(a,\cdot)}
\arrow[dashed,r]{}{}
&\X^{\chi\DiscS(a,\cdot)}
\end{tikzcd}
\end{equation*}
for $a\in\Gamma$, $i\in L$ and $\chi\in\widehat{L/2L}$. Altogether, the braiding is the following modification of the Tambara-Yamagami braiding:
\begin{equation*}
    \boxed{
    \Discq(\hat{a})^{-1}\cdot \DiscS(a,u(a,-a))\chi(u(a,-a)).
    }
\end{equation*}


\subsubsection*{Braiding $\X^\chi\otimes_A \X^\phi$:}

Finally, we consider
\begin{equation*}
\begin{tikzcd}
&[-42pt]
\X^\chi\otimes \X^\phi
\ar{r}{\alpha \Discq(\hat{t}+k)}
\ar{d}{\sim}
&[1em]
\X^\phi\otimes \X^\chi
\ar{d}{\sim}
\\
&
\X^\chi\otimes_A \X^\phi
\ar{d}{\chi(k)}
&
\X^\phi\otimes \X^\chi
\ar{d}{\phi(k)}
\\
\bigoplus_{\hat{t}+k,\DiscS(t,\cdot)=\chi\phi \Discq}
&
\C_{\hat{t}+k}
\arrow[dashed,r]{}{}
&\C_{\hat{t}+k}
\end{tikzcd}
\end{equation*}
for $\chi,\phi\in\widehat{L/2L}$. The difference between $\Discq(\hat{t}+k)$ and $\Discq(\hat{t})$ is precisely caught by the difference between $\chi(k)$ and $\phi(k)$. Altogether, the braiding is simply the Tambara-Yamagami braiding on the representative:
\begin{equation*}
\boxed{
\alpha \,\Discq(\hat{a}).
}
\end{equation*}
Note that the normalisation factor $\alpha^2=\eps\e(-\rk(L)/8)$ as a square root of the signature is finite even for the groups $\R^d$, and our calculation predicts that it remains the same after condensation.


\subsection*{Twists}

We forgo the determination of the twists, as we have not developed the respective calculations and they can be given directly in the resulting category. We observe that the twist remains the same after condensation.


\begin{concl}\label{concl:TYeven}
We constructed (nonrigorously, but very explicitly) the condensation $(\cC_1\oplus\cC_g)^\loc_A$ of $\cC=\smash{\Vect_{\R^d}^{\DiscQ}}\oplus\Vect$. This produces the braided $\Z_2$-crossed tensor category
\begin{equation*}
\LM(\Gamma,\sigma,\omega,\delta,\eps\,|\,q,\alpha,\beta)
\end{equation*}
defined in \cite{GLM24b}, \autoref*{GLM2sec_evenTY}, with $\Gamma,\sigma,\omega,\delta,\eps$ and $q,\alpha,\beta$ given in terms of lattice data as in \autoref*{GLM2sec_latticedata} of \cite{GLM24b}.
\end{concl}


\section{\autoref{idea_1} for \VOA{}s}\label{sec_VOA}

The purpose of this section is to discuss \autoref{idea_1} in the context of \voa{}s. To this end, we first describe the precise setting in which the categorical notions treated in this work appear and then introduce the relevant examples, namely Heisenberg and lattice \voa{}s. Then, we show how \autoref{idea_1} can be used to describe lattice $\Z_2$-orbifolds of Tambara-Yamagami type. More general lattice $\Z_2$-orbifolds will appear in \autoref{sec_VOAIdea2}.


\subsection{\VOA{}s and \texorpdfstring{$G$}{G}-Orbifolds}\label{sec_VOAIntro}

A \emph{\voa{}}~$\V$ is, roughly speaking, a formal power series version of a unital, commutative, associative algebra with a derivation \cite{Bor86,FLM88}. The algebra structure on $\mathcal V$ in this sense is given by a bilinear map
\begin{align*}
Y(\cdot, z)\colon \; \mathcal \V \times \mathcal \V &\to \V((z)),\\
(v,w) &\mapsto Y(v, z)w
\end{align*}
with $Y(v, z)w$ a formal Laurent series in $z$ with coefficients in $\V$, subject to certain well-motivated axioms that encode the commutativity and associativity up to re-expansion. Standard mathematical textbooks on the subject are \cite{FBZ04,Kac90,FHL93,LL04}. Moreover, $\V$ is endowed with an action of the infinite-dimensional Virasoro Lie algebra generated by $L_n$, $n\in \Z$, that is compatible with the action of the Witt algebra on the formal variable $z$. In particular, $\V$ has a $\Z$-grading by eigenvalues of $L_0$, which acts semisimply on $\V$.

Similarly, one can define a \emph{\voa{} module} $\mathcal{M}$ over $\V$ whose module structure is defined by a bilinear map
\begin{equation*}
Y_{\mathcal{M}}(\cdot, z)\colon \; \mathcal \V \times \mathcal{M} \to \mathcal{M}((z)),
\end{equation*}
again subject to essentially the same axioms. (Ordinary) modules are also graded by $L_0$-eigenvalues, with $L_0$ acting also semisimply on $\V$, but the degrees are not necessarily in $\Z$. For $\V$-modules $\mathcal{M}$, $\mathcal{N}$ and $\mathcal{L}$ there is a version of a $\V$-balanced map, called \emph{intertwining operator}
\begin{equation*}
Y_{\mathcal{M,N}}^{\mathcal{L}}(\cdot, z)\colon \; \mathcal \mathcal{M} \times \mathcal{N} \to \mathcal{L}[\log(z)]\{z\},
\end{equation*}
where $\mathcal{L}[\log(z)]\{z\}$ denotes series involving arbitrary complex exponents and logarithms of the formal variable $z$, describing a multivalued function on $\C\setminus\{0\}$ with a regular singular point at $z=0$. The vector space of intertwining operators of this type is denoted by $\binom{\mathcal{M}\;\; \mathcal{N}}{\mathcal{L}}$.

\medskip

As is proved in the series of papers beginning with \cite{HL92,HL94,HL95}\nocite{HL92,HL94,HL95,HL95a,HL95b,Hua95b} and \cite{HLZ1}\nocite{HLZ1,HLZ2,HLZ3,HLZ4,HLZ5,HLZ6,HLZ7,HLZ8} and reviewed in \cite{HL13}, in good cases (e.g., when $\V$ is $C_2$-cofinite) the category $\Rep(\V)$ of (suitable) $\V$-modules can be endowed with a braided tensor category structure. The tensor product $\mathcal{M}\otimes\mathcal{N}$ (the universal object admitting an intertwining operator from $\mathcal{M}\times\mathcal{N}$, in analogy to commutative rings) is often also written as $\mathcal{M}\boxtimes\mathcal{N}$ or $\mathcal{M}\boxtimes_\V\mathcal{N}$. The braiding comes from analytically continuing $Y_{\mathcal{M,N}}^{\mathcal{L}}(\cdot, z)$ counterclockwise to $Y_{\mathcal{M,N}}^{\mathcal{L}}(\cdot, e^{\pi\i}z)$. So, the double braiding is nontrivial if and only if the intertwining operator is multivalued.

\medskip

Let $G$ be a finite group acting (faithfully) on a \voa{} $\V$ in a way that is compatible with $Y$. The subspace $\smash{\V^G}$ fixed by $G$ is again a \voa{}, and hence its category of modules $\smash{\Rep(\V^G)}$ is again a braided tensor category (provided that $\smash{\V^G}$ is sufficiently regular). Certainly, restricting a $\V$-module to $\smash{\V^G}$ produces a (typically not simple) $\smash{\V^G}$-module. However, this does not exhaust all $\smash{\V^G}$-modules. To overcome this, for every $g\in G$ one defines \emph{$g$-twisted $\V$-modules} $\mathcal{M}$ as vector spaces with a module structure
\begin{equation*}
Y_{\mathcal{M}}(\cdot, z)\colon \; \mathcal \V \times \mathcal{M} \to \mathcal{M}[\log(z)]\{z\}
\end{equation*}
involving now also multivalued functions, such that the multivaluedness for an analytic continuation $z\mapsto e^{2\pi\i} z$ around $z=0$ is controlled by $g$ as
\begin{equation*}
Y_{\mathcal{M}}(a,e^{2\pi\i} z)=Y_{\mathcal{M}}(g.a, z)
\end{equation*}
for $a\in V$. In particular, restricting to $\smash{\V^G}$ again yields an untwisted module.

It is expected that in good cases the category of $g$-twisted $\V$-modules for $g\in G$ forms a braided $G$-crossed tensor category (see \autoref{def_Gcrossed})
\begin{equation*}
\cC=\Rep^G(\V)=\bigoplus_{g\in G}\cC_g,
\end{equation*}
where $\cC_g=\smash{\Rep^{g}(\V)}$ corresponds to the $g$-twisted modules so that in particular $\cC_1=\Rep^1(\V)=\Rep(\V)$. Further, the equivariantisation $\cC\sslash G=\smash{\Rep(\V^G)}$ is supposed to be the braided tensor category of $\smash{\V^G}$-modules.
\begin{equation*}
\begin{tikzcd}
\Rep(\V)\arrow[hookrightarrow]{rrrr}{\text{$G$-crossed extension}}\arrow[dashed]{ddrrrr}{}&&&&\Rep^G(\V)\arrow{dd}{\text{$G$-equivariantisation}}\\\\
&&&&\Rep(\V^G)
\end{tikzcd}
\end{equation*}
Then, in particular, every $\smash{\V^G}$-module is a direct sum of $g$-twisted $\V$-modules for possibly several $g\in G$.

\medskip

We now say more precisely under which conditions the above statements hold. For simplicity, in this paper we restrict ourselves to very regular \voa{}s, namely those whose categories of untwisted modules are (finite and semisimple) modular tensor categories. However, we expect the methods of this paper to be applicable also in the setting where the (choice of) category of modules is only a braided tensor category (in particular, not necessarily semisimple or rigid).

A \voa{} is called \emph{\strat{}} if it is simple, rational, $C_2$-cofinite, self-contragredient and of CFT-type. Moreover, we say that a \voa{} satisfies the \emph{positivity condition} or \emph{$G$-positivity condition} if all irreducible modules or irreducible $g$-twisted modules for $g\in G$, respectively, except for the \voa{} itself have only positive $L_0$-eigenvalues.

Let $\V$ be a \strat{} \voa{}. Then Huang showed that the category $\cB=\Rep(\V)$ of (untwisted) $\V$-modules admits the structure of a \mtc{} (in the usual finite and semisimple sense) \cite{Hua08b}. If $\V$ also satisfies the positivity condition, then the quantum dimensions of all simple objects in $\cB$ are positive (in particular, $\cB$ is pseudo-unitary) and they coincide with the quantum dimensions obtained as certain limits of the module $q$-characters \cite{DJX13,DLN15}. Moreover, under this condition, the $\SMatrix$- and $\TMatrix$-matrix of $\V$ (defined via the modular invariance of $q$-characters) coincide with those defined for $\cB=\Rep(\V)$ after normalisation (see \cite{DLN15} for details).

Let $\V$ be a \voa{} and $G<\Aut(\V)$ a finite group of automorphisms of $\V$ (acting faithfully on $\V$). Suppose that the \fpvosa{} $\smash{\V^G}$ is \strat{} and satisfies the positivity condition (then it is not difficult to see that $\V$ is \strat{} and satisfies the $G$-positivity condition), so that $\smash{\Rep(\V^G)}$ and $\cB=\Rep(\V)$ are (pseudo-unitary) \mtcs{}. (The strong rationality of $\smash{\V^G}$ is implied, for instance, if $\V$ is \strat{} and $G$ is a solvable group \cite{Miy15,CM16}, see also \cite{McR21,McR21b}. In that situation, the positivity condition for $\smash{\V^G}$ follows from the $G$-positivity condition for $\V$ since all irreducible $\smash{\V^G}$-modules appear in a $g$-twisted $\V$-module for some $g\in G$ \cite{DRX17}).

Let $\cC=\smash{\Rep^G(\V)}=\bigoplus_{g\in G}\cC_g$ with $\cC_g=\Rep^{g}(\V)$ be the category of $g$-twisted $\V$-modules for all $g\in G$ (so that $\cB=\cC_1$). Under the stated assumptions, $\cC$ admits the structure of a braided $G$-crossed tensor (in fact, fusion) category extending $\cB$ and $\smash{\Rep(\V^G)}$ is the equivariantisation $\cC\sslash G$ of $\cC$ \cite{McR21b}.

\medskip

Recall that in the modular tensor category $\Rep(\V)$ or $\smash{\Rep(\V^G)}$ due to \cite{Hua08b} the ribbon twist is given by the unique eigenvalue of $e^{2\pi\i L_0}$ acting on each simple object. For completeness we also establish the meaning of the $G$-ribbon twist:
\begin{lem}\label{lem_GribbonVOA}
Let $\V$ be a \strat{} \voa{} and $G<\Aut(\V)$ finite such that $\smash{\V^G}$ is again \strat{} (e.g., if $G$ is solvable), so that the category $\smash{\Rep^G(\V)}$ of $g$-twisted $\V$-modules for $g\in G$ is a braided $G$-crossed fusion category and the modular tensor category category $\smash{\Rep(\V^G)}$ of modules of $\smash{\V^G}$ coincides with the $G$-equivariantisation $\smash{\Rep^G(\V)}\sslash G$.

Then there is a $G$-ribbon twist on $\smash{\Rep^G(\V)}$ that coincides for each simple object with one of the eigenvalues of $e^{2\pi\i L_0}$ on it.

This $G$-ribbon twist on $\smash{\Rep^G(\V)}$ yields only positive quantum dimensions if and only if the corresponding ribbon twist on $\smash{\Rep(\V^G)}$ has only positive quantum dimensions (e.g., when $\V$ is $G$-positive).
\end{lem}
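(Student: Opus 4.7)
The plan is to transport the ribbon structure on $\Rep(\V^G)$—which exists by Huang's theorem applied to the \strat{} $\V^G$ and is given by $e^{2\pi\i L_0}$—back to a $G$-ribbon structure on $\Rep^G(\V)$, using the equivalence $\Rep(\V^G)\cong\Rep^G(\V)\sslash G$ of \cite{McR21b} together with the general correspondence between ribbon structures on the equivariantisation of a braided $G$-crossed fusion category and $G$-ribbon structures on the category itself recorded in \autoref*{GLM2sec_GRibbon} of \cite{GLM24b}. Both the existence and uniqueness of $\theta^G$ then follow formally from this correspondence.

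Second, to identify the scalar $\theta^G_X$ at a simple $g$-twisted module $X\in\Rep^g(\V)$, I would use that the restriction $X|_{\V^G}$ decomposes into a direct sum of simple $\V^G$-modules by \cite{DRX17}. Pick any simple summand $N\subset X|_{\V^G}$; since $\V^G$ is strongly rational, the ribbon twist on $N$ in $\Rep(\V^G)$ is $\theta_N=e^{2\pi\i L_0}|_N$. Because $L_0\in\V^G$, this scalar agrees with the value of $e^{2\pi\i L_0}$ on the corresponding subspace of $X$, which on a simple ordinary twisted module is a single scalar since the $L_0$-spectrum sits in a single $\Z$-coset. The module $N$ arises as the underlying $\V^G$-module of a simple equivariant object $(X,\pi)\in\Rep^G(\V)\sslash G$; unpacking the equivariantisation correspondence identifies $\theta^G_X$ with this scalar, yielding the claimed description.

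For the positivity equivalence, the key input is that under equivariantisation the categorical dimension of a simple equivariant object $(X,\pi)$ factors as
\begin{equation*}
\dim_{\Rep^G(\V)\sslash G}\bigl((X,\pi)\bigr)=\frac{|G|}{|G_X|}\cdot\dim(\pi)\cdot\dim_{\Rep^G(\V)}(X),
\end{equation*}
where $G_X$ is the stabiliser of the isomorphism class of $X$ and $\dim(\pi)$ is a positive integer (the dimension of the irreducible $\chi_X$-projective representation $\pi$ of $G_X$). Hence positivity of all quantum dimensions in $\Rep(\V^G)$ is equivalent to positivity of all quantum dimensions in $\Rep^G(\V)$. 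In the $G$-positive case, the arguments of \cite{DJX13,DLN15,DRX17} show that categorical dimensions of simple $g$-twisted modules coincide with their (positive) Frobenius-Perron dimensions, yielding positivity throughout.

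The main obstacle will be the second step: verifying that the $G$-ribbon twist produced by the abstract equivariantisation correspondence of \cite{GLM24b} is literally $e^{2\pi\i L_0}$ on every twisted sector, not merely on the identity component where Huang's theorem applies directly. This requires a careful unpacking of the correspondence together with tracking how the $\V^G$-module structure of an equivariant pair $(X,\pi)$ inherits its $L_0$-action from the twisted module $X$.
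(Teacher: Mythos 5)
Your proposal takes essentially the same route as the paper, whose proof consists of two citations to \cite{GLM24b}: the first assertion is deferred to the correspondence between $G$-ribbon structures on a braided $G$-crossed fusion category and ribbon structures on its equivariantisation, and the second to the pseudo-unitarity comparison there; your three steps are an unpacking of exactly those two results, including the standard dimension formula for simple equivariant objects. One imprecision in your second step: the $L_0$-spectrum of a simple $g$-twisted module lies in a single coset of $\tfrac{1}{|g|}\Z$, not of $\Z$ (cf.\ equation~\eqref{eq_anomaly}), so $e^{2\pi\i L_0}$ generally has several distinct eigenvalues on $X$. The single-scalar claim is valid only for the chosen simple $\V^G$-summand $N$, whose $L_0$-spectrum does lie in a $\Z$-coset, and the resulting scalar depends on which summand (equivalently, which equivariant structure) is picked --- which is precisely why the lemma asserts agreement with \emph{one of} the eigenvalues of $e^{2\pi\i L_0}$ rather than with a canonically determined one.
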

\begin{proof}
The first assertion follows from the connection between the $G$-ribbon twist and the ribbon twist of the $G$-equivariantisation discussed in \autoref*{GLM2lem_ribbon} of \cite{GLM24b}, the second assertion follows from \autoref*{GLM2cor_pseudo} of \cite{GLM24b}.
\end{proof}


\subsection{Lattice \VOA{}s}\label{sec_LatticeVOA}

Well-known examples of \voa{}s are Heisenberg and lattice \voa{}s, which we review in the following.

\medskip

Let $\h=(\R^d,\DiscQ)$ be a real vector space equipped with a nondegenerate quadratic form $\DiscQ\colon\R^d\to\C^\times$ or, equivalently, with a nondegenerate symmetric bilinear form $\langle\cdot,\cdot\rangle\colon\R^d\times\R^d\to\R$ via $\DiscQ(v)=\e(\langle v,v\rangle/2)$. To $\h$ we associate the rank-$d$ Heisenberg \voa{} $\Heis$. To be precise, as we consider all \voa{}s over the base field $\C$, the \voa{} $\Heis$ only depends on the complexification $\h\otimes_\R\C\cong\C^d$. However, it is useful to retain the information about the real form $\h$ of $\h\otimes_\R\C$, which defines a real form of $\Heis$, in particular when considering a suitable category of $\Heis$-modules.

The category of (ordinary, completely reducible, with real ``momenta'') modules of $\Heis$ is the infinite pointed braided tensor category
\begin{equation*}
\Rep(\Heis)=\Vect_\h^\DiscQ
\end{equation*}
(see, e.g., \cite{CKLR19}). The irreducible modules are given by $\V_\alpha$ for $\alpha\in\h$. The usual definition of the twist $\theta_{\V_\alpha}=\e(h(\V_\alpha))=\DiscQ(\alpha)$ for $\alpha\in\h$ equips $\Rep(\Heis)$ with a (pseudo-unitary) ribbon structure. Here, $h(\V_\alpha)+\Z$ denotes the $\Z$-coset of $L_0$-eigenvalues of $\V_\alpha$.

\medskip

To a positive-definite, even lattice $L$ (with ambient space $\h=L\otimes_\Z\R$) we associate the lattice \voa{} $\V_L$ \cite{Bor86,FLM88}, which is \strat{} (in contrast to $\Heis$) and satisfies the positivity condition. As $\V_L$ is a conformal extension of $\Heis$, the category $\Rep(\V_L)$ of modules of the lattice \voa{} $\V_L$ can be thought of arising from $\Rep(\Heis)=\smash{\Vect_\h^\DiscQ}$ by an (infinite) condensation by a commutative algebra $A=\C_\epsilon[L]$ related to $L$, as described in \autoref{sec_latticeDiscriminantForm}, a fact that we shall use repeatedly in the following.

We describe the category of modules $\Rep(\V_L)$, which must be a \mtc{} by \cite{Hua08b}, in more detail. Let $(\Gamma,Q)\coloneqq L^*\!/L$ denote the discriminant form of $L$ (see \autoref{sec:prel}). The irreducible modules of $\V_L$ up to equivalence are $\V_{\alpha+L}$, $\alpha+L\in\Gamma$, and over $\Heis$ they decompose as $\V_{\alpha+L}=\bigoplus_{\beta\in \alpha+L} \V_{\beta}$ \cite{Don93}. In particular, the $L_0$-grading is in $
h(\V_{\alpha+L})+\Z=\langle\alpha,\alpha\rangle/2+\Z$ for $\alpha+L\in\Gamma$. As braided tensor category
\begin{equation*}
\Rep(\V_L)\cong\Vect_\Gamma^Q,
\end{equation*}
and choosing a representing abelian $3$-cocycle $(\sigma,\omega)$ corresponds to fixing a basis of the (here $1$-dimensional) spaces of $\V_L$-intertwining operators \cite{DL93}. Again, the usual definition of the twist $\theta_{\V_{\alpha+L}}=\e(h(\V_{\alpha+L}))=Q(\alpha+L)$ equips $\Rep(\V_L)$ with a (pseudo-unitary) ribbon structure (in fact, it is a \mtc{}).

\medskip

In the following sections, we shall consider examples of braided $G$-crossed tensor categories $\smash{\Rep^G(\V_L)}$ associated with lattice \voa{}s and finite automorphism groups $G<\Aut(\V_L)$, applying the methods developed in this text.

Recall that the definition of a lattice \voa{} $\V_L$ involves an (up to isomorphism irrelevant) choice of $2$-cocycle $\epsilon\colon L\times L\to\{\pm1\}$ satisfying $\epsilon(\alpha,\beta)/\epsilon(\beta,\alpha)=\e(\langle\alpha,\beta\rangle/2)$ for all $\alpha,\beta\in L$. This perfectly mirrors the construction described in \autoref{sec_latticeDiscriminantForm} of $\Rep(\V_L)=\smash{\Vect_\Gamma^Q}$ as a condensation by an infinite commutative algebra object $A=\C_\epsilon[L]$ in $\Rep(\Heis)=\smash{\Vect_\h^\DiscQ}$.

Now, given a lattice isometry $\auto\in\OO(L)$, its lifts $g=\phi_\eta(\auto)$ to $\Aut(\V_L)$ require a choice of function $\eta\colon L\to\C^\times$ satisfying $\eta(\alpha+\beta)\epsilon(\alpha,\beta)=\epsilon(\auto\alpha,\auto\beta)\eta(\alpha)\eta(\beta)$ for all $\alpha,\beta\in L$, with values in the torsion subgroup of $\C^\times$ if $g$ has finite order (see \cite{Lep85,FLM88,Bor92}). All finite-order automorphisms in $\Aut(\V_L)$, up to conjugacy, are of this form \cite{DN99,HM22}. Again, this mirrors the description in \autoref{sec_DiscFormAut} of the lifting of an automorphism of the commutative algebra $A=\C_\epsilon[L]$ to the condensation $\smash{\Vect_\Gamma^Q}$.

A lift $g=\phi_\eta(\auto)$ is called a \emph{standard lift} \cite{Lep85} if $\eta$ takes values in $\{\pm1\}$ and is trivial on $L^\auto$. Its order is either $|\auto|$ or $2|\auto|$. All standard lifts are conjugate in $\Aut(\V_L)$ \cite{EMS20a}. (On the other hand, it is always possible to find a lift $g=\phi_\eta(\auto)$ that has the same order as $\auto$, but it may not be a standard lift \cite{HM22}.)

\medskip

The twisted modules of $\V_L$ under standard lifts are described in \cite{DL96,BK04}, and using \cite{Li96} we can describe the twisted modules under arbitrary finite-order automorphisms (see, e.g., \cite{EMS20a,HM22}).

However, the point of this paper (see \autoref{idea_1}) is to use no (or only very little) explicit information on the twisted $\V_L$-modules. Rather, we use that a finite group $G<\Aut(\V_L)$ defines a categorical action on the untwisted modules $\Rep(\V_L)$ and then consider the possible braided $G$-crossed extensions, which are the candidates for the twisted modules $\smash{\Rep^G(\V_L)}$. At this point we only remark:
\begin{rem}\label{rem:latpos}
$\smash{\Rep^G(\V_L)}$ or equivalently (see \autoref{lem_GribbonVOA}) its equivariantisation $\smash{\Rep^G(\V_L)}\sslash G=\smash{\Rep(\V_L^G)}$ have only positive quantum dimensions (and, in particular, are always pseudo-unitary) because $\smash{\V_L^G}$ satisfies the positivity condition (see, e.g., \cite{Moe18,HM22}).
\end{rem}


\subsection{Fixed-Point Free Lattice Orbifolds of Order 2}\label{sec_VOALatticeMinusOne}

Continuing in the setting of the previous section, let $\auto\in\OO(L)$ be an isometry of $L$ of order~$2$, which induces an isometry of $\Gamma=L^*\!/L$, and assume that the latter has no nontrivial fixed-points, i.e.\ that $\Gamma^\auto=\{0+L\}$. Equivalently, $|\Gamma|$ is odd and $\auto$ acts on $\Gamma$ (but not necessarily on $L$) as multiplication by $-1$. Let $d_0$ and $d_1$ be the dimensions of the eigenspaces of $\auto$ on $\mathfrak{h}$ corresponding to the eigenvalues $\e(0/2)=1$ and $\e(1/2)=-1$, respectively. Then $d_0+d_1=\rk(L)$ and $d_0=\rk(L^\auto)$. Here, under the assumptions made on $\auto$, $d_0$ must be an integer multiple of $4$.

We consider a standard lift $g=\phi_\eta(\auto)\in\Aut(\V_L)$ and assume for simplicity that $\auto$ does not exhibit order doubling, i.e.\ that $\langle\alpha,\auto\alpha\rangle\in2\Z$ for all $\alpha\in L$. Then also $g$ has order~$2$ \cite{Bor92}. (Otherwise we could modify $g$ by an inner automorphism so that it has order~$2$. See, e.g., \cite{HM22}.) Let $G=\langle g\rangle\cong\Z_2$. This is the setting.

\medskip

The goal is to determine the braided $\Z_2$-crossed tensor category $\smash{\Rep^G(\V_L)}$ for this group $G\cong\Z_2$ using \autoref{idea_1} or \autoref{thm_ourENO_braided}, i.e.\ using as little explicit information about the $g$-twisted $\V_L$-modules as possible. Then, the equivariantisation is the orbifold category $\smash{\Rep(\V_L^G)}\cong\smash{\Rep^G(\V_L)}\sslash G$.

Indeed, based on the given $\Z_2$-action on the category $\Rep(\V_L)$ of untwisted modules, it follows from \autoref{cor_TY} and \autoref{thm_ourENO_braided} that $\smash{\Rep^G(\V_L)}$ is a braided $\Z_2$-crossed Tambara-Yamagami category $\smash{\Rep^G(\V_L)}\cong\smash{\Vect_\Gamma^Q}[\Z_2,\eps]$ for $\eps\in\{\pm1\}$. In particular, $\smash{\Rep^G(\V_L)}\cong\smash{\Vect_\Gamma\oplus\Vect}$, i.e.\ $\V_L$ has a unique irreducible $g$-twisted module up to isomorphism, which we call $\V_L(g)$ and identify with $\X$. This is already stated in \cite{BK04}, but here it follows purely from the theory of braided $G$-crossed extensions.

Finally, to fix the parameter $\eps\in\{\pm1\}$, we do in fact need some explicit information on the twisted sector. In this example, it suffices to consider the ribbon twist of $\V_L(g)$, which comes from the $L_0$-grading by \autoref{lem_GribbonVOA}. By \cite{DL96}, the unique irreducible $g$-twisted $\V_L$-module $\V_L(g)$ has $L_0$-grading in
\begin{equation}\label{eq_anomaly}
h(\V_L(g))+\frac{1}{2}\N=\frac{d_1}{16}+\frac{1}{2}\N,
\end{equation}
where $d_1/16$ is the (Heisenberg) vacuum anomaly. This also shows that $\V_L$ satisfies the $G$-positivity condition, which is true for arbitrary lattice \voa{}s and finite groups, as commented above. We obtain (cf.\ \cite{Bis18,EG23}):
\begin{thm}\label{thm_latTY}
Let $\V_L$ be the lattice \voa{} for a positive-definite, even lattice $L$ with $(\Gamma,Q)=L^*\!/L$ of odd order. Let $\Z_2\cong G=\langle g\rangle<\Aut(\V_L)$ where $g=\phi_\eta(\auto)$ is a standard lift of an isometry $\auto\in\OO(L)$ acting by $-\!\id$ on $\Gamma$.

Then, the category of modules $\smash{\Rep^G(\V_L)}$ is equivalent to the braided $\Z_2$-crossed extension of $\smash{\Vect_\Gamma^Q}$ with a (pseudo-unitary) $\Z_2$-ribbon structure defined in \autoref{cor_TY}
\begin{align*}
\Vect_\Gamma^Q[\Z_2,\eps]
&=
\TY(\Gamma,B_Q^{1/2},\eps\mid Q^{1/2}, \alpha,\eps/\alpha)
\end{align*}
with
\begin{equation*}
\eps=\e(-d_0/8)\Bigl(\frac{2}{|\Gamma|}\Bigr)\in\{\pm1\}
\end{equation*}
and where $\alpha$ is an (irrelevant) choice of square root of
\begin{equation*}
\alpha^2=\eps\,G(\Gamma,q^{-1})
=\eps\left(\frac{2}{|\Gamma|}\right)\e(-\rk(L)/8)=\e(-d_0/4-d_1/8).
\end{equation*}
Both solutions for $\alpha$ produce nontrivially equivalent $G$-crossed braidings (see \autoref{rem_alphaEquivalence}).
\end{thm}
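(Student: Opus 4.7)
The overall strategy is to apply \autoref{idea_1}: since we already know by \autoref{sec_TY} and \autoref{thm_ourENO_braided} that there are, up to equivalence, only two pseudo-unitary braided $\Z_2$-crossed extensions of $\smash{\Vect_\Gamma^Q}$ with the action $a\mapsto -a$, namely $\smash{\Vect_\Gamma^Q[\Z_2,\eps]}$ for $\eps\in\{\pm 1\}$, the entire problem reduces to (a)~setting up the VOA input and (b)~distinguishing the two possibilities by a single twist eigenvalue.

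For step (a), I would first invoke the standard fact that $\Rep(\V_L)\cong\smash{\Vect_\Gamma^Q}$ as a modular tensor category (see \autoref{sec_LatticeVOA}), and that under the chosen standard lift $g=\phi_\eta(\bar g)$ the induced categorical $G$-action on $\Rep(\V_L)$ is given on simple objects by $\V_{\alpha+L}\mapsto \V_{-\alpha+L}$ with trivial tensor structure (since $|\Gamma|$ is odd, the obstruction cocycle $\epsilon$ can be trivialised on $\Gamma$, so there is nothing to check at the level of coherence; cf.\ \autoref{exm_counterexampleA1}). Next, since $G=\Z_2$ is solvable, $\smash{\V_L^G}$ is strongly rational \cite{Miy15,CM16,McR21}, and by \cite{McR21b} the category $\smash{\Rep^G(\V_L)}$ is a braided $G$-crossed fusion category extending $\Rep(\V_L)$. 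By \autoref{rem:latpos}, its quantum dimensions are positive.

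Step (b) is then just bookkeeping. By \autoref{cor_TY}, $\smash{\Rep^G(\V_L)}\cong\smash{\Vect_\Gamma^Q[\Z_2,\eps]}$ for one of the two values $\eps=\pm 1$, and it only remains to identify the correct sign. In both cases the twisted component contains a unique simple object $\X=\V_L(g)$. By \autoref{lem_GribbonVOA}, the $G$-ribbon twist $\beta=\theta_\X$ must equal $\e(h(\V_L(g)))$, which by \eqref{eq_anomaly} (i.e.\ the computation of \cite{DL96}) is $\e(d_1/16)$. On the other hand, \autoref{thm_TYribbon} and \autoref{cor_TYpseudounitary} give $\beta=\eps/\alpha$ with $\alpha^2=\eps\,G(\Gamma,q^{-1})$, so that
\begin{equation*}
\beta^2 \;=\; \eps\,G(\Gamma,q^{-1})^{-1}\;=\;\eps\,\Bigl(\tfrac{2}{|\Gamma|}\Bigr)\,\e\bigl(\sign(\Gamma,Q)/8\bigr),
\end{equation*}
by \autoref{rem_Gauss}. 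Equating with $\beta^2=\e(d_1/8)$ and using the Milgram formula $\sign(\Gamma,Q)\equiv\rk(L)=d_0+d_1\pmod 8$ for the discriminant form of a positive-definite even lattice yields
\begin{equation*}
\eps\;=\;\Bigl(\tfrac{2}{|\Gamma|}\Bigr)\,\e(-d_0/8),
\end{equation*}
which lies in $\{\pm 1\}$ because $d_0\equiv 0\pmod 4$ (the assumption that $\bar g$ acts as $-\id$ on the odd-order group $\Gamma$ forces the $(+1)$-eigenspace of $\bar g$ to intersect $L$ in a rank-$d_0$ lattice with even quadratic form and even discriminant group, hence $d_0\in 4\Z$). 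Substituting this back gives the stated formula for $\alpha^2$. The irrelevance of the choice of square root of $\alpha^2$ is \autoref{rem_alphaEquivalence}.

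The main (and essentially the only nonformal) step here is the matching of the categorical twist $\beta$ with the VOA twist $\e(h(\V_L(g)))$; everything else is a direct application of the already-proved classification results. The potential obstacle would have been the possibility that neither of the two extensions from \autoref{cor_TY} is realised by $\smash{\Rep^G(\V_L)}$, but this is excluded by \autoref{thm_ourENO_braided} together with positivity of all quantum dimensions (\autoref{rem:latpos}), so exactly one value of $\eps$ must match.
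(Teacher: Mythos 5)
Your proposal is correct and follows essentially the same route as the paper: reduce to the two extensions of \autoref{cor_TY} via \autoref{thm_ourENO_braided} and positivity, then pin down $\eps$ by matching $\beta^2$ against $\e(2h(\V_L(g)))=\e(d_1/8)$ using \autoref{lem_GribbonVOA}, \eqref{eq_anomaly} and \autoref{rem_Gauss}. The only caveat is that $\beta$ itself is determined by the $L_0$-grading only up to sign (the weights live in $d_1/16+\tfrac12\N$), so the comparison must be made at the level of $\beta^2$, which is exactly what both you and the paper actually do.
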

\begin{proof}
It only remains to determine the sign $\eps\in\{\pm1\}$. As just explained, we do this by studying the $G$-ribbon twist in the $g$-twisted sector. By \autoref{thm_TYribbon}, the square of the twist of the simple object $\X$ in $\smash{\TY(\Gamma,B_Q^{1/2},\eps\mid Q^{1/2}, \alpha,\eps/\alpha)}$ is
\begin{align*}
\beta^2&=\eps^2/\alpha^2=1/\alpha^2=\bigl(\eps\, G(\Gamma,q^{-1})\bigr)^{-1}=\eps\left(\frac{2}{|\Gamma|}\right)\e(\rk(L)/8)\\
&=\eps\left(\frac{2}{|\Gamma|}\right)\e((d_0+d_1)/8)
\end{align*}
Identifying $\X$ with the twisted module $\V_L(g)$, the above expression must coincide by \autoref{lem_GribbonVOA} with the following expression involving the $L_0$-weight grading from equation~\eqref{eq_anomaly}: $\e(2h(\V_L(g)))=\e(d_1/8)$. This yields the asserted expression for $\eps$.
\end{proof}
The theorem describes the braided $\Z_2$-crossed tensor category of $G$-twisted $\V_L$-modules, including the braiding and the associators. This is in contrast to the results in \cite{ADL05}, where only the simple objects and fusion rules are determined (on the level of the equivariantisation), not the complete modular tensor category $\smash{\Rep(\V_L^G)}$.

\begin{rem}
The relation between the ribbon twist and the $L_0$-weights of the irreducible modules, which we assert in \autoref{lem_GribbonVOA} and use in the above proof, was to our knowledge not yet established in the literature in the $G$-crossed setting.
\end{rem}

In the \voa{} literature, it is traditionally more common to study the equivariantisation:
\begin{cor}\label{cor:latTY}
In the situation of \autoref{thm_latTY}, the modular tensor category $\smash{\Rep(\V_L^G)}\cong\smash{\Rep^G(\V_L)}\sslash G$ is given by the modular tensor category (partially) described in \autoref*{GLM2sec_equiv} of \cite{GLM24b} for the special case of $|\Gamma|$ odd.
\end{cor}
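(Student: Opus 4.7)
The plan is to assemble the corollary from three inputs that are already available: (i) the general equivalence between $G$-orbifold categories and $G$-equivariantisations of braided $G$-crossed extensions; (ii) the identification of $\Rep^G(\V_L)$ with $\Vect_\Gamma^Q[\Z_2,\eps]$ from \autoref{thm_latTY}; and (iii) the explicit description of the $\Z_2$-equivariantisation $\Vect_\Gamma^Q[\Z_2,\eps]\sslash\Z_2$ recorded in \autoref*{GLM2sec_equiv} of \cite{GLM24b}.

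Concretely, I would first verify that the setup of \autoref{sec_VOAIntro} applies. Since $G\cong\Z_2$ is (trivially) solvable and $\V_L$ is strongly rational, the results of \cite{Miy15,CM16} ensure that $\V_L^G$ is strongly rational, and by \autoref{rem:latpos} (or directly \cite{Moe18,HM22}) it satisfies the positivity condition. Hence $\Rep(\V_L^G)$ is a pseudo-unitary modular tensor category, and by \cite{McR21b} there is a canonical braided equivalence
\begin{equation*}
\Rep(\V_L^G)\;\cong\;\Rep^G(\V_L)\sslash G.
\end{equation*}

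Next, I would substitute the output of \autoref{thm_latTY}, which identifies $\Rep^G(\V_L)$, as a braided $\Z_2$-crossed ribbon fusion category with positive quantum dimensions, with the Tambara-Yamagami style extension $\Vect_\Gamma^Q[\Z_2,\eps]$ for the explicit sign $\eps=\e(-d_0/8)(\frac{2}{|\Gamma|})$. Applying equivariantisation (which is a functorial construction on braided $G$-crossed fusion categories) and comparing with the computation in \autoref*{GLM2sec_equiv} of \cite{GLM24b}, specialised to $|\Gamma|$ odd, then yields the claim. By \autoref{prop:TY_modular}, the resulting category is automatically a modular tensor category, confirming the ``modular'' part of the statement.

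There is essentially no hard step here: the corollary is by design a direct consequence of \autoref{thm_latTY} and the equivariantisation computation. The only conceptual point worth being careful about is that the equivariantisation preserves all of the relevant structure — in particular the ribbon twist and thus the identification of twist eigenvalues with $L_0$-weights — which is handled by \autoref{lem_GribbonVOA} and the surrounding discussion. If any genuine work is needed beyond citation, it would be in matching normalisations and cocycle representatives between \autoref{cor_TY}, used to fix $\Vect_\Gamma^Q[\Z_2,\eps]$, and the conventions in \autoref*{GLM2sec_equiv} of \cite{GLM24b} before specialising to $|\Gamma|$ odd; this is routine bookkeeping rather than a real obstacle.
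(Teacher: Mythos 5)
Your proposal is correct and follows essentially the same route the paper takes: the corollary is read off by combining the equivalence $\Rep(\V_L^G)\cong\Rep^G(\V_L)\sslash G$ from the orbifold setup of \autoref{sec_VOAIntro}, the identification of $\Rep^G(\V_L)$ with $\smash{\Vect_\Gamma^Q}[\Z_2,\eps]$ from \autoref{thm_latTY}, and the equivariantisation computed in \autoref*{GLM2sec_equiv} of \cite{GLM24b} (with modularity guaranteed for $|\Gamma|$ odd as in \autoref{prop:TY_modular}). The paper likewise presents this as an immediate consequence requiring no further argument.
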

In \autoref*{GLM2sec_equiv} of \cite{GLM24b}, we describe the simple objects, fusion rules and modular data of the orbifold category $\smash{\Rep(\V_L^G)}$, but we could easily also write down all coherence data, as these follow immediately from the ones in $\smash{\Rep^G(\V_L)}$, which are described in detail in \autoref*{GLM2sec_evenTY} of \cite{GLM24b}. This equivariantisation (for the special case of $|\Gamma|$ odd) is also given in \cite{GNN09}. The fusion rules are in perfect agreement with \cite{ADL05}.

\begin{ex}
As a special case, suppose that $L$ is unimodular, i.e.\ that $|\Gamma|=1$. Then $\rk(L)=d_0+d_1\in8\N$. The lattice \voa{} $\V_L$ is holomorphic, i.e.\ its category of modules is $\Rep(\V_L)=\Vect$, and hence the equivariantisation $\smash{\Rep(\V_L^G)}$ of the braided $G$-crossed extension $\smash{\Rep^G(\V_L)}$ for any (solvable) finite group $G$ must be a twisted Drinfeld double $\cD_\omega(G)$ for some cocycle $\omega$. Indeed, this was conjectured in \cite{DVVV89,DPR90}, proved for cyclic $G$ in \cite{Moe16,EMS20a} and in full generality in \cite{DNR21b,GR24}.

On the other hand, for $G\cong\Z_2$, the above proposition identifies $\smash{\Rep^G(\V_L)}$ with $\TY(\Gamma,\sigma,\eps\,|\,q,\alpha,\eps/\alpha)$. Clearly, $\Gamma$, $\sigma$ and $q$ are trivial and
\begin{align*}
\eps&=\e(-d_0/8)=\e(d_1/8)=\e(2h(\V_L(g)))\in\{\pm1\},\\
\alpha&=\pm\eps^{1/2}=\pm\e(d_1/16)=\pm\e(h(\V_L(g))).
\end{align*}
In other words, the sign $\eps$ determines exactly which of the two possible cocycles $\omega$ in $\cD_\omega(G)$ is the correct one. This is in perfect agreement with the results in \cite{Moe16,EMS20a}.
\end{ex}


\section{\autoref{idea_2} for \VOA{}s}\label{sec_VOAIdea2}

We demonstrate the idea of computing the braided tensor category of an orbifold \voa{} (or a braided $G$-crossed extension) from an existing one using a condensation by a commutative algebra $A$, as described in \autoref{thm_currentExtVsCrossedExt} (see \autoref{idea_2}). In the \voa{} context, this amounts to computing the orbifold category $\smash{\Rep^G(\V)}$ from a known orbifold category $\smash{\Rep^G(\mathcal{W})}$, where $G$ is a group of automorphisms of $\V$ and $\V\supset\mathcal \mathcal{W}$ is a conformal extension preserved by $G$ i.e.\ such that $G$ restricts to automorphisms of $\mathcal{W}$.


\subsection{From Tambara-Yamagami to Tambara-Yamagami}\label{sec_VOAIdea2_TYtoTY}

As a first check of \autoref{idea_2} in the purely categorical setting, we showed in \autoref{prop_TYtoTY} that the $A$-condensation of the Tambara-Yamagami category
\begin{equation*}
\Vect_{\DiscGamma}^\DiscQ[\Z_2,\bar\eps]=\TY(\DiscGamma,B_\DiscQ^{1/2},\bar\eps|\DiscGamma|^{1/2} \mid \DiscQ^{1/2}, \bar\alpha,\bar\eps/\bar\alpha)\supset\Vect_\DiscGamma^\DiscQ,
\end{equation*}
a braided $\Z_2$-crossed extension of $\smash{\Vect_\DiscGamma^\DiscQ}$ with $|\DiscGamma|$ odd, with respect to the algebra $A=\C_\epsilon[I]$ defined by the $\DiscQ$-isotropic subgroup $I\subset\DiscGamma$ (fixed by $\Z_2$) is again of the form
\begin{equation*}
\Vect_\Gamma^Q[\Z_2,\eps]=\TY(\Gamma,B_Q^{1/2},\eps|\Gamma|^{1/2} \mid Q^{1/2},\alpha,\eps/\alpha)\supset\Vect_\Gamma^Q,
\end{equation*}
a braided $\Z_2$-crossed extension of $\smash{\Vect_\Gamma^Q}$, where $\Gamma=I^\perp/I$, $Q$ is the quadratic form induced by $\DiscQ$ on $I^\perp/I$, and where the sign choices $\bar\eps=\eps$ agree.

\medskip

On the \voa{} side (see \autoref{sec_VOALatticeMinusOne}), this corresponds, for example, to the situation of \voa{}s $\V_{\bar{L}}$ and $\V_L$ associated with two positive-definite, even lattices $\bar{L}\subset L$ of the same rank, so that
\begin{equation*}
\Rep(\V_{\bar{L}})=\Vect_\DiscGamma^\DiscQ\quad\text{and}\quad\Rep(\V_L)=\Vect_\Gamma^Q,
\end{equation*}
with discriminant forms $\DiscGamma=\bar{L}^*\!/\bar{L}$ and $\Gamma=L^*\!/L$ of odd order. Then the image of $L$ under the modulo-$\bar{L}$ projection map is a $\DiscQ$-isotropic subgroup $I=\bar{L}^*\!/\bar{L}$ with new discriminant form $\Gamma\cong I^\perp/I$.

Further, we consider a lattice involution $\nu\in\OO(L)$ restricting to $\OO(\bar{L})$ and satisfying that $\DiscGamma^\nu$ and $\Gamma^\nu$ are trivial, lifting in a suitable way to an involution $g$ of $\V_{\bar{L}}$ and of $\V_L$. For $G=\langle g\rangle\cong\Z_2$ we study the braided $G$-crossed extensions
\begin{equation*}
\Rep^G(\V_{\bar{L}})=\Vect_\DiscGamma^\DiscQ[\Z_2,\bar\eps]\quad\text{and}\quad\Rep^G(\V_L)=\Vect_\Gamma^Q[\Z_2,\eps].
\end{equation*}
Our categorical result, that the $A$-condensation produces the choice of braided $\Z_2$-crossed tensor category with the same sign $\bar\eps=\eps$, matches the \voa{} result in \autoref{thm_latTY}, that the choice of twist in the form of the sign $\bar\eps=\e(-d_0/8)\smash{(\frac{2}{|\DiscGamma|})}$ with $d_0=\rk(\bar{L}^\nu)$ and $|\DiscGamma|$ only depends on data that do not change when replacing $\bar{L}$ by the extension $L$. Indeed, the first factor only depends on the action of the automorphism $\nu$ on the ambient vector space $\h=\bar{L}\otimes_\Z\R=L\otimes_\Z\R$, i.e.\ the Heisenberg \voa{}, and not on the particular lattices $\bar{L}$ or~$L$, and the second factor does not change when replacing $\bar{L}$ by $L$ by elementary properties of the Kronecker symbol.


\subsection{Reflection Orbifolds of Lattice \VOA{}s}\label{sec_VOAevenTV}

We now come to the main result for \voa{}s, based on the categorical statement in \autoref{thm_evenTY_here} (from \cite{GLM24b}, based on \autoref{idea_2} here). We obtain a complete categorical answer for the braided $\Z_2$-crossed tensor category of the reflection orbifold of an arbitrary lattice \voa{} (with only a minor simplifying assumption), strengthening the results in \cite{ADL05}, where only the simple objects and fusion rules were determined on the level of the equivariantisation.

\medskip

Let $L$ be an even, positive-definite lattice of rank $d$ and $(\Gamma,Q)=L^*\!/L$ its discriminant form. Let $\V_L$ be the corresponding lattice \voa{}. For simplicity, we assume that $L$ satisfies the stronger evenness condition in \autoref*{GLM2ass_strongeven} of \cite{GLM24b} (see \autoref{sec:prel}). Then \autoref{sec_latticeDiscriminantForm} describes how
\begin{equation*}
\Rep(\V_L)=\Vect_\Gamma^Q
\end{equation*}
arises as condensation of $\smash{\Vect_{\R^d}^\DiscQ}$ by the commutative algebra $A=\C_\epsilon[L]$ with $\epsilon=1$, equipped with the representing abelian $3$-cocycle $\sigma(a,b)=\DiscS(\hat{a},\hat{b})=\e(\langle\hat{a},\hat{b})/2\rangle$ and $\omega(a,b,c)=\DiscS(\hat{a},u(b,c))$ for cosets $a,b\in\Gamma$, with choices of representatives $\hat{a},\hat{b}\in L^*$ and the $2$-cocycle $u(a,b)=\hat{a}+\hat{b}-\smash{\widehat{a+b}}$ on $\Gamma$ with values in $L$.

Now, let $\auto=-\id$ be the reflection isometry of $L$. It induces an isometry of the discriminant form $\Gamma=L^*\!/L$ that also acts by multiplication by $-1$, and this is precisely how any lift $g$ of $\auto$ will permute the simple objects of $\Rep(\V_L)=\smash{\Vect_\Gamma^Q}$. We remark that while the action of $\auto=-\id$ on $L$ has only the trivial fixed-point, the action on $\Gamma$, which may be of even order, may have nontrivial fixed points, meaning that we are no longer necessarily in the situation of \autoref{sec_VOALatticeMinusOne}.

As $\auto$ is a fixed-point free lattice isometry, all its lifts $g=\phi_\eta(\auto)$ to $\Aut(\V_L)$ are conjugate and again of order~$2$ (see, e.g., \cite{EMS20a}). Set $G\coloneqq\langle g\rangle\cong\Z_2$. In fact, \autoref{prop_ActionOnCondensation} describes the action of $G$ on $\smash{\Vect_\Gamma^Q}$ as follows, where $\epsilon=1$ allows us to take $\eta=1$: for all $a,b\in\Gamma$
\begin{align*}
g_{*}&\colon \C_a \longmapsto \C_{-a},\\[-1mm]
\tau^g&\colon g_{*}(\C_a \otimes_A \C_b)
\xrightarrow{\DiscS(\hat{a},-\hat{b}-\widehat{-b})}
g_{*}(\C_a) \otimes_A g_*(\C_b).
\end{align*}

We shall describe the (pseudo-unitary) braided $\Z_2$-crossed extension $\smash{\Rep^G(\V_L)}$ of $\Rep(\V_L)$ consisting of untwisted and $g$-twisted $\V_L$-modules, which as an abelian category will be of the form
\begin{equation*}
\Rep^G(\V_L)\cong\Vect_\Gamma\oplus\Vect_{\Gamma/2\Gamma}.
\end{equation*}
We observe that the braided tensor category $\Rep(\V_L)$ together with the $G$-action (including the tensor structure $\tau^g$) coincides with the untwisted sector $\cC_1$ of the braided $\Z_2$-crossed tensor categories $\cC=\LM(\Gamma,\sigma,\omega,\delta,\eps\,|\,q,\alpha,\eps/\alpha)$ we construct in \autoref*{GLM2sec_evenTY} of \cite{GLM24b} (see \autoref{thm_evenTY_here}), depending on a sign $\eps$. Hence, by the classification of braided $G$-crossed extensions (see \autoref{thm_ourENO_braided} and \autoref{idea_1}), the category $\smash{\Rep^G(\V_L)}$ is precisely given by one of these two categories. We only need to determine the correct sign $\eps$.
\begin{thm}\label{thm_latmain}
Let $L$ be a positive-definite lattice satisfying the stronger evenness condition in \autoref{sec:prel}. Then the (pseudo-unitary) $\Z_2$-crossed ribbon fusion category of possibly twisted modules of the lattice \voa{} $\V_L$ under (a lift $G\cong\Z_2$ of) the action of $\langle-\id\rangle\cong\Z_2$ on $L$ is given by
\begin{equation*}
\Rep^G(\V_L)\cong\LM(\Gamma,\sigma,\omega,\delta,\eps\,|\,q,\alpha,\eps/\alpha)
\end{equation*}
from \autoref*{GLM2sec_evenTY} in \cite{GLM24b} where $(\sigma,\omega)$ is as above, $(q,\delta)$ is as defined in \autoref*{GLM2sec_latticedata} of \cite{GLM24b} and
\begin{equation*}
\eps=+1\in\{\pm1\},
\end{equation*}
and where $\alpha$ is an (irrelevant) choice of square root of
\begin{equation*}
\alpha^2=\eps\,G_\delta(\Gamma,q^{-1})=\eps\,|2\Gamma|^{-1/2}\sum_{{a\in \delta}} q(a)^{-1}=\eps\,\e(-\rk(L)/8).
\end{equation*}
\end{thm}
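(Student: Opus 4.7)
The plan is to apply \autoref{idea_1} in the form of the uniqueness half of \autoref{thm_ourENO_braided}. By the \voa{} orbifold theory recalled in \autoref{sec_VOAIntro}, together with the fact that $G\cong\Z_2$ is solvable and that $\V_L^G$ is $C_2$-cofinite and rational, the category $\smash{\Rep^G(\V_L)}$ is a braided $\Z_2$-crossed ribbon fusion category extending $\Rep(\V_L)$. The untwisted sector identifies with $\smash{\Vect_\Gamma^Q}$ via the infinite commutative-algebra condensation in \autoref{sec_latticeDiscriminantForm}, and the induced categorical $\Z_2$-action from the automorphism $-\id$ of $L$ coincides with the action described just before the statement of the theorem, by \autoref{prop_ActionOnCondensation} (applied in the form discussed in \autoref{exm_counterexampleA1}, using the stronger evenness assumption that lets us take $\epsilon=1$ and $\eta_g=1$).

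Given this, \autoref{thm_ourENO_braided} says that, up to equivalence, the braided $\Z_2$-crossed extensions of $\smash{\Vect_\Gamma^Q}$ with the prescribed action form a torsor over $H^3(\Z_2,\C^\times)\cong\Z_2$. By \autoref{thm_evenTY_here}, both torsor elements are realised by the explicit constructions $\LM(\Gamma,\sigma,\omega,\delta,\eps\,|\,q,\alpha,\beta)$ for $\eps=\pm1$; hence $\smash{\Rep^G(\V_L)}$ must be equivalent, as a braided $\Z_2$-crossed fusion category, to $\LM(\Gamma,\sigma,\omega,\delta,\eps\,|\,q,\alpha,\beta)$ for some $\eps\in\{\pm1\}$ and some choice of $\Z_2$-ribbon structure $(\alpha,\beta)$. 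By \autoref{rem:latpos}, this ribbon structure has only positive quantum dimensions, and \autoref{cor_TYpseudounitary} (extended to the present setting in \cite{GLM24b}) fixes $\beta=\eps/\alpha$, matching the statement of the theorem modulo the sign $\eps$.

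It remains to pin down $\eps$, following the same strategy as in the proof of \autoref{thm_latTY}. By \autoref{lem_GribbonVOA}, the $\Z_2$-ribbon twist on any simple object of the twisted sector $\cC_g$ agrees, up to an integer power of $\e(1)$, with $\e(h(\cM))$ for the corresponding simple $g$-twisted $\V_L$-module $\cM$. The square $\theta_{\cM}^{2}=\e(2h(\cM))$ is then an $L/2L$-independent invariant of the twisted sector. On the \voa{} side, since $\auto=-\id$ has fixed-point space of dimension $d_0=0$, the standard formula (see equation~\eqref{eq_anomaly} and \cite{DL96,BK04}) gives $h(\cM)\in\rk(L)/16+\tfrac12\N$ for every simple $g$-twisted $\V_L$-module, so $\theta_{\cM}^{2}=\e(\rk(L)/8)$. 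On the categorical side, the ribbon twist on $\X^\chi$ in $\LM$ is $\beta\cdot(\text{root of unity})$ with $\beta^2=1/\alpha^2=\eps\,\e(\rk(L)/8)$, again up to a character-dependent correction of square one. Comparing these two expressions forces $\eps=+1$, as asserted.

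The only real obstacle is verifying the last comparison cleanly: the ribbon twist on an individual $\X^\chi$ may depend on $\chi$, but only by a sign coming from the evaluation of $\chi$ on a lattice element related to $u$, and this sign squares to~$1$; hence $\beta^2$ is the genuine invariant to be matched. This needs to be checked using the explicit formula for the $\Z_2$-ribbon twist in $\LM(\Gamma,\sigma,\omega,\delta,\eps\,|\,q,\alpha,\beta)$ as given in \autoref{GLM2sec_evenTY} of \cite{GLM24b}, together with the identification of $\X^\chi$ with a simple $g$-twisted $\V_L$-module made explicit via the infinite-condensation picture of \autoref{sec_idea2_infTY}. Once this is done, both sides agree if and only if $\eps=+1$, completing the identification.
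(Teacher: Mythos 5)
Your proposal is correct and follows essentially the same route as the paper: reduce to the two candidates $\LM(\Gamma,\sigma,\omega,\delta,\eps\,|\,q,\alpha,\eps/\alpha)$ via the classification of braided $\Z_2$-crossed extensions (\autoref{thm_ourENO_braided} together with \autoref{thm_evenTY_here} and the matching of the categorical $\Z_2$-action from \autoref{prop_ActionOnCondensation}), then fix $\eps$ by comparing $\beta^2=1/\alpha^2=\eps\,\e(\rk(L)/8)$ with $\e(2h)=\e(\rk(L)/8)$ coming from the vacuum anomaly $\rk(L)/16$ of the $g$-twisted modules via \autoref{lem_GribbonVOA}. Your extra care about the $\chi$-dependence of the twist on individual $\X^\chi$ (which squares away) is a sensible precaution; the paper handles this implicitly by working with $\beta^2$ and noting that all twisted modules share the same $L_0$-grading modulo $\tfrac12\Z$.
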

\begin{proof}
It only remains to determine the sign $\eps$. We proceed analogously to the proof of \autoref{thm_latTY} and consider the square of the $G$-ribbon twist of any simple object $\X^{\bar{x}}$, $\bar{x}\in\Gamma/2\Gamma$, in the twisted sector. It satisfies
\begin{align*}
\beta^2&=\eps^2/\alpha^2=1/\alpha^2=(\eps\,G_\delta(\Gamma,q^{-1}))^{-1}=\eps\,\e(\rk(L)/8).
\end{align*}
We identify the $\X^{\bar{x}}$ with the irreducible $g$-twisted $\V_L$-modules. Since $g$ is a lift of the involution $-\!\id$ on $L$, all these twisted modules have the same $L_0$-weight grading, which is simply given by the vacuum anomaly $\rk(L)/16$ \cite{DL96}, as in equation~\eqref{eq_anomaly} but with $d_0=0$ and $d_1=\rk(L)$. Hence, by \autoref{lem_GribbonVOA}, $\beta^2$ must equal $\e(\rk(L)/8)$, which yields the assertion.
\end{proof}
Some remarks are warranted.

It should not be too difficult to drop the simplifying assumption on the even lattice~$L$. What changes is that one now has to keep track of the $2$-cocycle $\epsilon$ on $L$ and consequently the function $\eta$ on $L$. In particular, in the (nonrigorous) derivation of the braided $\Z_2$-crossed tensor category in \autoref{sec_idea2_infTY}, one needs to consider the condensation by the nontrivially twisted algebra $A=\C_\epsilon[L]$.

Note that on the level of simple objects and fusion rules, the above result agrees with the \voa{} result in \cite{ADL05}, but goes much further by explicitly stating, amongst other things, the associators and braiding (after passing to the equivariantisation, which is a straightforward computation).

Philosophically speaking, the argument given in \cite{ADL05} on the level of \voa{}s is actually somewhat similar to the (nonrigorous) derivation in \autoref{sec_idea2_infTY} as an infinite condensation of an infinite Tambara-Yamagami category $\smash{\Vect_{\R^d}}\oplus\Vect$, in the sense that the fusion rules for the fixed-point \voa{} $\smash{\V_L^G}$ are determined by viewing $\smash{\V_L^G}$ as an extension of the fixed points $\smash{\Heis^G}$ of the underlying Heisenberg \voa{} $\Heis$ with $\h=L\otimes_\Z\R\cong\R^d$.

The result in \autoref{thm_latmain} generalises \autoref{thm_latTY}. However, because of the simplifying assumption on the lattice $L$, namely that is of the form $L=\sqrt{2}K$ for an integral lattice $K$, the Kronecker symbol in the Gauss sum in \autoref{thm_latTY} does not appear. (Compare also \autoref*{GLM2conj_partialGauss2} to \autoref*{GLM2conj_partialGauss1} in \cite{GLM24b}.)

Finally, recall from \autoref{exm_counterexampleA1} (see also \autoref*{GLM2rem_minusoneaction} in \cite{GLM24b}) that it is crucial to not only look at the permutation action of $G$ on the simple objects in the untwisted sector, but to consider the full categorical $G$-action on the untwisted sector, including the tensor structure $\tau^g$. In particular, there are examples of $\Z_2$-orbifolds of lattice \voa{}s where $\Z_2$ acts as $-\!\id$ on $\Gamma$, i.e.\ where the corresponding braided $\Z_2$-crossed tensor category $\smash{\Rep^G(\V_L)}$ is $\smash{\Vect_\Gamma\oplus\Vect_{\Gamma/2\Gamma}}$ as abelian category, but where $\smash{\Rep^G(\V_L)}$ is not given by the braided $\Z_2$-crossed tensor category constructed in \autoref*{GLM2sec_evenTY} of \cite{GLM24b}. One indication of this would be if not all irreducible $g$-twisted modules had the same $L_0$-weights modulo $1/2$.

\medskip

We also state the corresponding result for the $\Z_2$-equivariantisation:
\begin{cor}\label{cor_latmain}
In the situation of \autoref{thm_latmain}, the modular tensor category $\smash{\Rep(\V_L^G)}\cong\smash{\Rep^G(\V_L)}\sslash G$ is given by the modular tensor category (partially) described in \autoref*{GLM2sec_equiv} of \cite{GLM24b} for $\eps=1$.
\end{cor}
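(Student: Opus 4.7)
The proof will be essentially a one-line deduction from \autoref{thm_latmain}, combined with the general facts about orbifold categories recalled in \autoref{sec_VOAIntro}. My plan is as follows.

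First, I would invoke the setup of \autoref{thm_latmain}: under the stronger evenness assumption on $L$ and for a lift $G\cong\Z_2$ of $-\!\id$, the lattice \voa{} $\V_L$ is \strat{} and the fixed-point \voa{} $\V_L^G$ is again \strat{} (for instance because $G$ is solvable, by \cite{Miy15,CM16}), and moreover satisfies the positivity condition (see \autoref{rem:latpos}). Hence, as recalled in \autoref{sec_VOAIntro} (using \cite{McR21b}), the braided $\Z_2$-crossed fusion category $\Rep^G(\V_L)$ is well-defined and its $\Z_2$-equivariantisation is precisely the modular tensor category $\Rep(\V_L^G)$:
\begin{equation*}
\Rep(\V_L^G)\cong\Rep^G(\V_L)\sslash\Z_2.
\end{equation*}

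Second, I would apply \autoref{thm_latmain} to identify the braided $\Z_2$-crossed ribbon fusion category on the right-hand side:
\begin{equation*}
\Rep^G(\V_L)\cong\LM(\Gamma,\sigma,\omega,\delta,+1\,|\,q,\alpha,1/\alpha),
\end{equation*}
with the data as spelled out there. Since equivariantisation is a functorial construction on braided $G$-crossed tensor categories (and preserves equivalence of such), this immediately yields
\begin{equation*}
\Rep(\V_L^G)\cong\LM(\Gamma,\sigma,\omega,\delta,+1\,|\,q,\alpha,1/\alpha)\sslash\Z_2.
\end{equation*}
The explicit description of the latter is the content of \autoref*{GLM2sec_equiv} of \cite{GLM24b}, specialised to $\eps=+1$, which gives the simple objects, fusion rules, associators, braiding and modular data.

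Third, I would verify modularity: the neutral component $\smash{\Vect_\Gamma^Q}$ is a (pseudo-unitary) modular tensor category, so by the remark after \autoref{ex_DrinfeldCenter} (or the standard fact that $G$-equivariantisations of braided $G$-crossed fusion categories with nondegenerate neutral component are themselves nondegenerate, cf.\ \cite{DGNO10}) the equivariantisation is again nondegenerate, hence a modular tensor category. Pseudo-unitarity and positivity of the quantum dimensions were already arranged in \autoref{thm_latmain}.

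There is essentially no obstacle once \autoref{thm_latmain} is in hand; the only minor subtlety is matching the $\Z_2$-ribbon structure on $\LM(\Gamma,\sigma,\omega,\delta,+1\,|\,q,\alpha,1/\alpha)$ with the usual ribbon structure on $\Rep(\V_L^G)$ coming from $e^{2\pi\i L_0}$, but this is precisely the content of \autoref{lem_GribbonVOA} together with the equivariantisation correspondence between $G$-ribbon structures and ribbon structures reviewed in \autoref*{GLM2sec_GRibbon} of \cite{GLM24b}.
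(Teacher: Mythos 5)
Your proposal is correct and follows exactly the route the paper intends: the corollary is an immediate consequence of \autoref{thm_latmain} combined with the identification $\smash{\Rep(\V_L^G)}\cong\smash{\Rep^G(\V_L)}\sslash G$ recalled in \autoref{sec_VOAIntro} (via \cite{McR21b}), after which the explicit description of the equivariantisation is delegated to \autoref*{GLM2sec_equiv} of \cite{GLM24b}. The paper offers no separate proof beyond this deduction, so your additional remarks on nondegeneracy and the matching of the $G$-ribbon structure (via \autoref{lem_GribbonVOA}) are consistent with, and slightly more explicit than, what the paper records.
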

In particular, the simple objects, fusion rules and modular data are given in \autoref*{GLM2prop_eqivsimple}, \autoref*{GLM2prop_equi_fusion} and \autoref*{GLM2prop_equivmodular} of \cite{GLM24b}, agreeing with \cite{ADL05}.


\subsection{General \texorpdfstring{$\Z_2$}{Z\_2}-Orbifolds of Lattice \VOA{}s}\label{sec_VOAIdea2_Els}

With the method in \autoref{thm_currentExtVsCrossedExt} (\autoref{idea_2}), we can also extend the knowledge of lattice orbifolds under fixed-point free automorphisms of order~$2$ (see \autoref{sec_VOALatticeMinusOne}) to determine the braided $G$-crossed tensor category $\smash{\Rep^{\Z_2}(\V_L)}$ of arbitrary $\Z_2$-orbifolds of lattice \voa{}s $\V_L$. The setup is parallel to the strategy employed in \cite{BE15,Els17} (on the level of simple objects and fusion rules), namely by reducing to the fixed-point free and the trivial case.

We leave the concrete description of these braided $\Z_2$-crossed tensor categories $\smash{\Rep^{\Z_2}(\V_L)}$ for future work, bur rather only describe the general idea.

\medskip

Let $L$ be a positive-definite, even lattice and consider again the ambient vector space $\h=L\otimes_\Z\R$. Let $\auto\in\OO(L)$ be an arbitrary automorphism of $L$ of order $2$ and consider a standard lift $g\in\Aut(\V_L)$, which we assume to be again of order~$2$ for simplicity (the extension to nonstandard lifts or order doubling is not difficult). Set $G=\langle g\rangle\cong\Z_2$. Then $\h$ orthogonally decomposes into $\auto$-eigenspaces
\begin{equation*}
\h=\h_+\oplus \h_-
\end{equation*}
with eigenvalues $\pm1$, respectively. We consider the intersection of $L$ with these eigenspaces
\begin{equation*}
L_\pm \coloneqq L\cap \h_\pm,\quad L_+\oplus L_- \subset L,
\end{equation*}
which are again even lattices, and their direct sum has finite index in $L$. We also consider the corresponding discriminant forms
\begin{equation*}
\Gamma_{\pm}\coloneqq L_\pm^* / L_\pm.
\end{equation*}
The image $I$ of $L$ in $\Gamma_+\oplus\Gamma_-$ is an isotropic subgroup and $I^\perp/I\cong L^*\!/L$ is the discriminant form of $L$. Hence, the category of untwisted modules $\Rep(\V_L)\cong\smash{\Vect_\Gamma^Q}$ is the condensation of $\smash{\Rep(\V_{L_+\oplus L_-})\cong\Vect_{\Gamma_+}^{Q_{+}}\boxtimes\Vect_{\Gamma_-}^{Q_-}}$ by an algebra object corresponding to $I$.

On the other hand, the action of $\auto$ on the lattice $L_+\oplus L_-$ splits: by design, it acts trivially on $L_+$ and by $-\!\id$ on $L_-$. Similarly, the lift $g\in\Aut(\V_L)$ restricts to $\V_{L_+\oplus L_-}\cong\V_{L_+}\otimes\V_{L_-}$ and acts as identity on $\V_{L_+}$ and as a standard lift of $-\!\id$ on $\V_{L_-}$. Hence, we obtain the trivial (and nonfaithful) action of $G$ on $\smash{\Vect_{\Gamma_+}^{Q_{+}}}$ and the (faithful) action on $\smash{\Vect_{\Gamma_-}^{Q_-}}$ assumed in \autoref{thm_latmain}. With the additional simplifying assumption that $L_-$ is strongly even, \autoref{thm_evenTY_here}, the main result of \cite{GLM24b}, then describes the braided $G$-crossed extensions of $\smash{\Vect_{\Gamma_-}^{Q_-}}$.

So, by \autoref{thm_currentExtVsCrossedExt} or \autoref{idea_2}, the braided $G$-crossed tensor category $\smash{\Rep^G(\V_L)}$ we are looking for can be obtained as the condensation of the braided $G$-crossed tensor category
\begin{align*}
\Rep^G(\V_{L_+\oplus L_-})&\cong
\bigl(\Vect_{\Gamma_+}\oplus\Vect_{\Gamma_+}\bigr)\boxtimes_{\Z_2}\LM(\Gamma_-,\ldots)\\
&\cong\bigl(\Vect_{\Gamma_+}^{Q_+}\boxtimes\Vect_{\Gamma_-}^{Q_-}\bigr)\oplus\bigl(\Vect_{\Gamma_+}\boxtimes\Vect\bigr)
\end{align*}
by the algebra object corresponding to $I$. Here, $\LM(\Gamma_-,\ldots)$ is the braided $\Z_2$-crossed extension of $\smash{\Vect_{\Gamma_-}^{Q_-}}$ from \autoref{thm_evenTY_here}. We summarise this in the following commuting diagram:
\begin{equation*}
\begin{tikzcd}
\Vect_{\Gamma_+}^{Q_{+}}\boxtimes\Vect_{\Gamma_-}^{Q_-}
\arrow[rightsquigarrow]{d}{}
\arrow[hookrightarrow]{rr}{\text{$\Z_2$-crossed ext.}}
&&
\Rep^G(\V_L)
\arrow[rightsquigarrow]{d}{}
\\
\Vect_\Gamma^Q
\arrow[hookrightarrow]{rr}{\text{$\Z_2$-crossed ext.}}
&&
\begin{tabular}{c}
$\bigl(\Vect_{\Gamma_+}\oplus\Vect_{\Gamma_+}\bigr)\boxtimes_{\Z_2}\LM(\Gamma_-,\ldots)$\\[+1mm]
$\cong\bigl(\Vect_{\Gamma_+}^{Q_+}\boxtimes\Vect_{\Gamma_-}^{Q_-}\bigr)\oplus\bigl(\Vect_{\Gamma_+}\boxtimes\Vect\bigr)$
\end{tabular}
\end{tikzcd}
\end{equation*}


\bibliographystyle{alpha_noseriescomma}
\bibliography{references}{}

\end{document}